\documentclass[11pt]{amsart}

\usepackage[OT2, T1]{fontenc}
\usepackage{url}
\usepackage{amsmath}
\usepackage{array}
\usepackage{graphicx}
\usepackage{amsfonts}
\usepackage{amssymb}
\usepackage{amstext}
\usepackage{amsthm}
\usepackage{hyperref}
\usepackage{colonequals}
\usepackage{enumitem}
\usepackage{cleveref}
\usepackage[all,cmtip]{xy}
\usepackage{fullpage}
\usepackage{mathrsfs}
\usepackage{tikz-cd}
\usetikzlibrary{calc,babel}
\numberwithin{equation}{subsection}

\newtheorem{theorem}{Theorem}
\numberwithin{theorem}{section}
\newtheorem{lemma}[theorem]{Lemma}
\newtheorem{proposition}[theorem]{Proposition}
\newtheorem{corollary}[theorem]{Corollary}

\newtheorem{conj}[theorem]{Conjecture}

\theoremstyle{definition}
\newtheorem{defn}[theorem]{Definition}

\theoremstyle{remark}
\newtheorem{rem}[theorem]{Remark}

\newtheorem{example}[theorem]{Example}

\tikzset{
  curve/.style={
    settings={#1},
    to path={
      (\tikztostart)
      .. controls ($(\tikztostart)!\pv{pos}!(\tikztotarget)!\pv{height}!270:(\tikztotarget)$)
      and ($(\tikztostart)!1-\pv{pos}!(\tikztotarget)!\pv{height}!270:(\tikztotarget)$)
      .. (\tikztotarget)\tikztonodes
    },
  },
  settings/.code={%
    \tikzset{quiver/.cd,#1}%
    \def\pv##1{\pgfkeysvalueof{/tikz/quiver/##1}}%
  },
  quiver/.cd,
  pos/.initial=0.35,
  height/.initial=0,
}

\newcommand{\MT}{{\mathrm{MT}}}

\newcommand{\diag}{{\mathrm{diag}}}

\DeclareMathOperator{\GL}{GL}

\DeclareMathOperator{\GSp}{GSp}

\DeclareMathOperator{\SO}{SO}
\DeclareMathOperator{\Or}{O}
\DeclareMathOperator{\GSpin}{GSpin}

\DeclareMathOperator{\Gal}{Gal}
\DeclareMathOperator{\End}{End}
\DeclareMathOperator{\Hom}{Hom}
\DeclareMathOperator{\Aut}{Aut}

\DeclareMathOperator{\Res}{Res}
\DeclareMathOperator{\Spec}{Spec}

\DeclareMathOperator{\Pic}{Pic}
\DeclareMathOperator{\NS}{NS}
\DeclareMathOperator{\tr}{tr}

\begin{document}

\title[Densities of Picard Rank Jumps for K3 Surfaces over Global Fields]{Densities of Picard Rank Jumps for K3 Surfaces over Global Fields}

\author{Robin Huang}

\begin{abstract}
    Let $X$ be an algebraic K3 surface defined over a global field $K$ of characteristic $0$ or $p \geq 3$, assumed ordinary in the case where $K$ is a global function field. We provide an exact formula for the density of primes $p$ of $K$ at which the geometric Neron-Severi group of the reduction $X_p$ has a given rank in terms of the Galois representation attached to $X$. As an extension, we prove that there are infinitely many reductions of $X$ with an element of the geometric Neron-Severi group representing a fixed square class under the intersection pairing and prove results about the density of such places. In particular, we show that if $X$ corresponds to a sufficiently generic moduli point, a density zero set of places of $K$ satisfy this condition. 
\end{abstract}

\maketitle
\tableofcontents

\section{Introduction}

Let $X$ be an algebraic K3 surface defined over a global field $K$. The geometric Neron-Severi group $\NS(\overline{X})$ is well-known to always be contained in that of its reduction $X_p$ modulo a good prime $p$ of $K$ via the specialization map
\[\mathrm{sp}: NS(\overline{X}) \hookrightarrow NS(\overline{X_p}). \]
When $K$ is a number field Charles in \cite{charles2014picard} studies the density of primes for which the above map is an isomorphism; more precisely, allowing finite extensions of the base field $K$, he states the difference in Picard rank for a density 1 set of primes in terms of the rank of the transcendental lattice and its ring of Hodge structure endomorphisms. One of our paper's main results provides an exact formula for the density of primes corresponding to each Picard rank difference for a general global field without passing to a field extension. 

\subsection{Explicit formula for density of Picard rank jumps}
\begin{theorem}\label{thm.main}
    Let $X$ be a K3 surface over a global field $K$ with geometric Picard rank $\rho(\bar{X})$. Assume that $X$ is ordinary and does not have CM when $K$ is a global function field. Let $V$ be the rational transcendental Hodge structure of $X$ when $K$ is a number field or that of the canonical lift of $X_{\bar{K}}$ when $K$ is a global function field. Let $F$ denote the endomorphism field of $V$ as a Hodge structure.\footnote{By a result of Zarhin, $F$ is totally real or CM, see \ref{subsec.mtnumfield}.}
    \begin{enumerate}
        \item If $F$ is CM and the dimension of $V$ as an $F$ vector space is even, then $\rho(\overline{X}) = \rho(\overline{X}_p)$ for a density 1 set of places $p$ of $K$. 
        \item  If $F$ is totally real and the dimension of $V$ as an $F$ vector space is odd, then $\rho(\overline{X}) + [F: \mathbb{Q}]= \rho(\overline{X}_p)$ for a density 1 set of places $p$ of $K$. 
        \item Otherwise, we may compute the density of primes of $p$ of $K$ such that $\rho(\overline{X}_p) = \rho(\overline{X}) + c$ for some $c \geq 0$ as
        \[\frac{a_c}{\# \pi_0}\]
        where $\pi_0$ is the component group of the $\ell$-adic monodromy group and $a_c$ is computable explicitly in terms of the Galois action, see \ref{sec.a}. 
    \end{enumerate}
\end{theorem}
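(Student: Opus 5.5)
The plan is to turn the Picard rank of $\overline{X}_p$ into the dimension of a fixed space for a Frobenius element in the $\ell$-adic monodromy group, and then to count Frobenius elements with Chebotarev. Fix a prime $\ell$ different from the residue characteristics under consideration. Let $T_\ell$ be the transcendental part of $H^2_{et}(\overline{X},\mathbb{Q}_\ell(1))$. In the function field case we use the canonical lift, so that $T_\ell \cong V\otimes\mathbb{Q}_\ell$ as a Galois module. Let $G_\ell$ be the Zariski closure of the image of Galois, with identity component $G_\ell^\circ$ and component group $\pi_0 = G_\ell/G_\ell^\circ$.

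\textbf{Step 1.} For good ordinary (or number field) places $p$, the Tate conjecture for K3 surfaces over finite fields (Nygaard--Ogus, Charles, Madapusi Pera, Kim--Madapusi Pera for $p\geq 3$) gives
\[\rho(\overline{X}_p)-\rho(\overline{X}) = \dim\{v\in T_\ell : \mathrm{Frob}_p^n v = v \text{ for some } n\},\]
that is, the multiplicity of roots of unity among the eigenvalues of $\mathrm{Frob}_p$ on $T_\ell$.

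\textbf{Step 2.} Identify $G_\ell^\circ$. By Mumford--Tate for K3 surfaces (Tankeev, André in the number field case, and the canonical lift/ordinary analogue in the function field case), $G_\ell^\circ = \MT(V)_{\mathbb{Q}_\ell}$. By Zarhin, $\MT(V)$ is $\Res_{F_0/\mathbb{Q}}\SO(V,\phi)$ when $F$ is totally real, and $\Res_{F_0/\mathbb{Q}}U(V,\psi)$ when $F$ is CM, where $F_0$ is the maximal totally real subfield and $\phi$, $\psi$ are the induced $F$-bilinear or $F$-hermitian forms.

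\textbf{Step 3.} Use that, after a Zariski-density argument, the Frobenius elements in each connected component $gG_\ell^\circ$ are regular semisimple for a density-one set of $p$ (Serre, Larsen--Pink). By Chebotarev, the density of $p$ with $\mathrm{Frob}_p$ in a given component is $1/\#\pi_0$. So it suffices to show that on each component the number $c(g)$ of eigenvalues that are roots of unity is constant off a proper Zariski-closed subset. Then set $a_c=\#\{\text{components with } c(g)=c\}$.

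\textbf{Step 4.} On the identity component a generic element of $\SO_m$ over $F_0$, with $m=\dim_F V$, has eigenvalue $1$ only when $m$ is odd. Each of the $[F:\mathbb{Q}]$ embeddings then contributes one, which gives case (2). For the unitary group a generic element has no eigenvalue $1$, which gives case (1) when $m$ is even. In the cases listed under (1) and (2) I expect the whole of $G_\ell$ to act through components whose generic elements behave the same way, so the density is $1$. The idea is that $\pi_0$ acts via $\Aut(F)$ and the orthogonal or unitary similitude structure, and any extra components contribute either nothing (CM, even $m$) or exactly $[F:\mathbb{Q}]$ (totally real, odd $m$, via the determinant).

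\textbf{Main obstacle.} The hardest part is this last point together with case (3): computing $c(g)$ generically on non-identity components. Those components can permute the embeddings of $F$ and can contain elements of $\Or\setminus\SO$ or of the complex conjugation type. For such a $g$ I would decompose $V\otimes\overline{\mathbb{Q}}_\ell$ according to the cycle structure of $g$ acting on $\mathrm{Hom}(F,\overline{\mathbb{Q}}_\ell)$. On a cycle of length $k$, $g^k$ lies in the corresponding factor, generic up to a twist. The forced eigenvalues $\pm1$, and the $k$-th roots of unity coming from $\det$ conditions, are then read off from $\det(g^k)$ on that block. This gives $c(g)$ explicitly, and therefore $a_c$.
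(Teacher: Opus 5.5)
Your Steps 1--3 are the paper's Proposition~\ref{prop.Chebotarev}. You do not need regular semisimplicity there: the loci $\{m(g)\ge k\}$ are conjugation-stable and Zariski closed, so the generic value of $m$ on a component is its minimum, and Serre's measure-zero result plus Chebotarev does the rest. That already gives (3) in the weak form ``$a_c$ is the number of components whose generic count is $c$''.

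\emph{The gap.} Cases (1) and (2), and the explicit value $a_c=\#\{b:J(b)=c\}$, depend on knowing \emph{which} components can occur. For this you only write that you ``expect'' the extra components to behave well. Listing what a component can do (permute embeddings, contain elements of $\Or\setminus\SO$ or of conjugation type) does not exclude anything else, and for (1) you must exclude every component that forces a root-of-unity eigenvalue. The missing ingredient is the normalizer computation, Proposition~\ref{prop.normalizer}:
\begin{itemize}
\item Since $G_\ell\subseteq N_{\Or}(G_\ell^\circ)$, each element normalizes the commutant $F\otimes\mathbb{Q}_\ell$ of $G_\ell^\circ$ and so permutes its eigenspaces.
\item In the CM case, orthogonality gives $(xax^{-1})^\dagger=xa^\dagger x^{-1}$, so conjugate eigenspaces go to conjugate eigenspaces.
\item After removing a (signed) block permutation, what is left preserves each eigenspace and the form. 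In the totally real case it therefore lies in $G_\ell^\circ$ up to a reflection on each eigenspace. In the CM case it lies in $G_\ell^\circ$ outright, and the only extra freedom is the swap $\begin{bmatrix}0&I\\ I&0\end{bmatrix}$ of $V^{\tau}$ and $V^{\bar\tau}$.
\end{itemize}
This gives $\pi_0\hookrightarrow(\mathbb{Z}/2\mathbb{Z})^d\rtimes S_d$ with explicit coset representatives. Note that this is an action on $\Hom(F,\mathbb{Q}_\ell)$ through the normalizer, not ``via $\Aut(F)$''; $F$ need not be Galois over $\mathbb{Q}$.

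\emph{Completing the cycle argument.} With that description your cycle sketch works, but it must be run on $F^+$-eigenspaces and finished with an existence statement. Take an orbit $C$ of $\sigma$ and let $R=g^{|C|}$ be the return map on one $F^+$-eigenspace. As $g$ ranges over the component, $R$ ranges over a full coset $r^{\epsilon(C)}\cdot(\text{factor of }G_\ell^\circ)$. Its determinant is $(-1)^{\epsilon(C)}$ in the totally real case and $(-1)^{n\epsilon(C)}$ in the CM case, because the swap has determinant $(-1)^n$. This is the only place the parity of $n$ enters in the CM case. It is invisible on $G_\ell^\circ$, whereas your Step~4 attaches it to the identity component.

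The forced eigenvalues then come from two facts. An orthogonal map of determinant $-1$ on an even-dimensional space has both $\pm1$ as eigenvalues. On an odd-dimensional space, $\det R$ is always an eigenvalue. Each forced eigenvalue of $R$ yields $|C|$ root-of-unity eigenvalues of $g$, which gives the lower bounds $J(b)$ and $d$. You still need an element attaining these bounds. The paper uses block-diagonal $2\times2$ rotations of infinite order with disjoint spectra, chosen with $M=M^{-T}$ so that swaps are harmless.

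\emph{A pitfall in your indexing.} If you index blocks by single embeddings in $\Hom(F,\overline{\mathbb{Q}}_\ell)$ as you propose, then in the CM case the return map on $V^\tau$ is a cosquare $P^{-T}P$ of determinant $1$. Reading forced eigenvalues off $\det$ then misses the eigenvalue $1$ that exists for odd $n$; it comes from $\det(P-P^T)=0$, not from a determinant condition.

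\emph{Function field input.} In the function field case $V\otimes\mathbb{Q}_\ell$ is not a Galois module. The input you need is Jiang's theorem $\mathrm{Hg}(f)_{\mathbb{Q}_\ell}\cong G_\ell^\circ$ (Theorem~\ref{thm.FFMT}), after which Zarhin's description applies.
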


\begin{rem}
In the function field case, the condition that $X$ does not have CM is to ensure that $X$ does not correspond to an isotrivial family of K3 surfaces. This assumption is equivalent to the Mumford-Tate group of $V$ not being a torus. Concretely, this requires the dimension of $V$ as an $F$ vector space being at least 2 when $F$ is CM and $\dim_F V_F \geq 3$ when $F$ is totally real (the latter condition is automatic, see \cite[Lemma 3.2]{van2008real}). 
\end{rem}

\begin{rem}
    The assumption that $X$ is ordinary when $K$ is a function field is used to gain some control on the $\ell$-adic monodromy group via an analog of the Mumford-Tate conjecture, see \ref{subsec.MTff} for a summary of the relevant result of \cite{jiang2023p}. A condition requiring maximal monodromy would yield a similar result beyond the ordinary case.
\end{rem}

\subsection{Previous works}
We briefly describe some related results and how Theorem \ref{thm.main} sits in relation to them. Most straightforwardly, one implication of the Tate conjecture (proved by \cite{nygaard1985tate, maulik2014supersingular,madapusi2015tate,charles2016birational,ito2021cm,kim20162,pera2020erratum}) is $\rho(\bar{X}_p)$ is always even. Consequently, if $X$ has odd geometric Picard rank, reduction modulo every prime yields a jump in Picard rank. 

Charles generalizes this dimension condition over a number field $K$ in \cite{charles2014picard}. In particular, he shows that if the dimension of the transcendental lattice as an $F$ vector space is odd and $F$ is totally real, for every prime $p$ of $K$ we have $\rho(\overline{X}_p) \geq \rho(\overline{X}) + [F:\mathbb{Q}]$ and up to an appropriate extension of $K$, this is an equality for a density 1 set of primes. 

On the other hand, when $\rho(\overline{X})$ is even, the Costa, Elsenhans and Jahnel in \cite{costa2020distribution} argue that primes $p$ whose corresponding Froebnii act by determinant $-1$ on the transcendental lattice give $\rho(\overline{X}_p) \geq \rho(\overline{X}) + 2$. To detect such primes, they introduce the so-called jump character. When $F$ is CM, using the Hodge conjecture Theorem 7.7 of \cite{elsenhans2024component} gives an explicit formula for the value of the jump character via the Galois action. Our proof recovers this formula without assumptions on $F$ or using the Hodge conjecture.\footnote{Inspired by the unpublished manuscript of Will Sawin \cite{sawin_distribution_nodate}.} More notably, Theorem \ref{thm.main}(3) yields a positive density of primes that give jumps in Picard rank not explained by any of the aforementioned phenomena. For more details on these points, see \ref{sec.a} and the examples in \ref{subsec.examples}.

\subsection{Cycles with a prescribed norm}

Theorem \ref{thm.main} answers how often a reduction of a given K3 surface carries an extra algebraic class. A natural generalization is to ask whether the new divisor class has norm in a prescribed square class, with the norm being induced by the intersection pairing. 

In the case where $A/K$ is an absolutely simple abelian variety, the condition of a reduction $A_v$ carrying an extra algebraic class with square norm corresponds to $A_v$ carrying additional idempotent endomorphisms. Thus, representing this particular square class corresponds to the abelian variety splitting into a product of smaller dimensional abelian varieties. Chavdarov in \cite[Theorem 2.2]{chavdarov1995generic} proved that for a family of curves over a finite field with full symplectic monodromy, a density one set of fibers have absolutely simple Jacobians. Zywina proved that after base extension, an abelian variety defined over a number field is isogenous to a product of lower dimensional abelian varieties for a density 1 set of primes if the endomorphism algebra is non-commutative; otherwise, the density is 0, see \cite[Theorem 1.2]{zywina2014splitting}.

In the case of a family mapping to a GSpin Shimura variety, the condition of representing a fixed square class translates to studying intersections of our family with special divisors of the form $Z(Dm^2)$, whose points correspond to Kuga-Satake abelian varieties with real multiplication by an order in $\mathbb{Q}(\sqrt{D})$, see \ref{subsec.gspin}. In \cite{maulik2022picard} and \cite{maulik2022reductions}, Maulik, Shankar and Tang prove that for a generically ordinary family of K3 surfaces over a proper curve $C$ in characteristic at least 5, there exist infinitely many points lying on certain families of special divisors $\bigcup Z(m)$ where $m$ takes values in a positive density subset of the natural numbers. 

We generalize their result to the family $\cup Z(Dm^2)$ for a fixed $D$, which is notably indexed by a density 0 subset of the natural numbers. This is the global function field analog of \cite{shankar2022exceptional}.

\begin{theorem}\label{thm.infinitude}
Let $\mathscr{S}$ denote the canonical integral model over $\mathbb{Z}_{(p)}$ of the GSpin Shimura variety associated to a $\mathbb{Z}$-lattice $(L, Q)$ of signature $(b, 2)$ with $b \geq 2$ such that $p$ does not divide the discriminant of $(L, Q)$. Let $C$ be an irreducible smooth proper curve with a finite morphism $C \rightarrow \mathscr{S}_{\mathbb{F}_p}$ such that the generic point of $C$ is ordinary and that the image of $C$ does not lie in any special divisors $Z(m)$. Then for each fixed $D > 0$ represented by $(L_{\mathbb{Q}},Q)$ there exist infinitely many $\bar{\mathbb{F}}_p$-points lying on $\bigcup_{m > 0} Z(Dm^2)$. 
\end{theorem}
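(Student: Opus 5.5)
We follow the strategy of Maulik--Shankar--Tang \cite{maulik2022picard, maulik2022reductions}, as adapted to the number field setting in \cite{shankar2022exceptional}: compare a global intersection number with a sum of local contributions. Suppose, for contradiction, that only finitely many $\bar{\mathbb{F}}_p$-points of $C$ lie on $\bigcup_{m>0} Z(Dm^2)$. The special divisors $Z(n)$ carry line bundles whose classes are the Fourier coefficients of a vector-valued modular form of weight $1+b/2$ (Borcherds, Kudla--Millson). Hence the global intersection numbers $C\cdot Z(n)$ are coefficients of a modular form whose Eisenstein part dominates. Restricting to $n = Dm^2$ and using that $D$ is represented by $L_{\mathbb{Q}}$, we get $C\cdot Z(Dm^2) = c\,(Dm^2)^{b/2}+o(m^b)$ for $m$ coprime to a fixed finite set of primes. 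Here $c>0$ is essentially $\deg(\omega|_C)$ times local density factors, and the positivity of these factors is exactly where the hypothesis that $D$ is represented is used. Summing over $m\le X$ gives a total of $\asymp X^{b+1}$.

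The main work is to bound the local contributions. Since $C$ is not contained in any $Z(m)$, the global intersection number equals $\sum_{P} i_P(C\cdot Z(Dm^2))$ over the finitely many points $P$ assumed to lie on the union. For each such $P$, look at the formal neighborhood of $C$ at $P$. There the lattice of special endomorphisms of the Kuga--Satake abelian scheme, deformed over $\mathrm{Spf}\,\bar{\mathbb{F}}_p[[t]]$, decays. The local intersection multiplicity with $Z(n)$ counts vectors $v$ with $Q(v)=n$ in the lattice $L_r$ of special endomorphisms that lift modulo $t^r$:
\[ i_P(C\cdot Z(n)) = \sum_{r\ge 1} \#\{v\in L_r : Q(v)=n\}. \]
We will need two separate estimates.

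At ordinary points we use the Serre--Tate canonical coordinates, as in \cite{maulik2022reductions}. The decay is controlled by the $q$-coordinates, and $L_r$ has index growing like $p^{r}$ in rank-$\le b+1$ directions. At supersingular (non-ordinary) points we use the Grothendieck--Messing decay bound of \cite{maulik2022picard}. We sum over $n=Dm^2$ with $m\le X$ and count vectors of norm in this sparse set. This is a lattice-point count on a quadric cone, restricted to norms of the form $D\cdot\square$; we bound it with geometry-of-numbers estimates together with the same decay rates. We expect each point to contribute $o(X^{b+1})$. For $b\ge 3$ this should follow directly from counting the vectors $v\in L_r$ with $Q(v)\le DX^2$ whose norm is $D$ times a square, namely $O(X^{\mathrm{rk}L_r -1+\epsilon}/[L:L_r])$ summed over $r$.

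The hardest step is the low-rank case $b=2$, together with supersingular points, where the local and global counts have the same order of magnitude. For this we plan to follow \cite{shankar2022exceptional}. First, discard a small set of $m$: those with $p\mid m$, and those whose squarefree-part constraints make the local terms large. Second, exploit the fact that the count of representations of $Dm^2$ by a fixed ternary or binary lattice is governed by class numbers, which averages to a smaller order than the Eisenstein main term, as Siegel--Weil bounds show. Finally, combining these gives total local contribution $<$ global contribution for large $X$, which is a contradiction. This proves infinitude.
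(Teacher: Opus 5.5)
The global asymptotics and your treatment of non-supersingular points are fine in outline. There $\operatorname{rk} L_n\le b$, and the paper needs no Serre--Tate input: the bound $n\le i_P(C.Z(m))\le C.Z(m)$ already forces $a_i(n)\gg n^{i/b}$, which gives $O(M^{b/2}\log M)$. Note that non-ordinary points need not be supersingular, but the same rank bound covers them.

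The genuine gap is at supersingular points, and it is there for every $b$, not only $b=2$. At such a point $\operatorname{rk}L_n=b+2$ over the relevant range of $n$. Already the level-one term $\sum_{m\le X} r_{L_1}(Dm^2)$ has the same order $X^{b+1}$ as the global term. So your claim that each point contributes $o(X^{b+1})$ fails there, and so does the bound $O(X^{\operatorname{rk}L_r-1+\epsilon}/[L:L_r])$ summed over $r$ that you offer for $b\ge 3$: it is $\gg X^{b+1}$. In any case a covolume-only count is not uniform in $r$; you need successive minima.

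Your proposed remedy does not address this. The lattices at a supersingular point have rank $b+2\ge 4$, not ternary or binary. Their representation numbers of $Dm^2$ are governed by Eisenstein coefficients of exactly the size of $|q_L(Dm^2)|$, so no class-number averaging lowers the order. Over $\mathbb{F}_p$ the global term is just $|q_L(m)|(\omega\cdot C)$ plus cusp-form error. Hence ``total local $<$ global'' can only come from a comparison of constants, and your final ``combining these'' step supplies none.

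The missing idea is the Hasse-invariant weighting of \cite{maulik2022picard,maulik2022reductions}. Put $g_P(m)=\frac{h_P}{p-1}|q_L(m)|$, where $h_P$ is the order of vanishing at $P$ of the Hasse invariant $H$. Since $H$ is a section of $\omega^{p-1}$ cutting out the non-ordinary locus, $\sum_P g_P(m)=|q_L(m)|(\omega\cdot C)$ is exactly the global main term. The paper then proves that for each supersingular $P$
\[
\sum_{m\in S_{D,M}} i_P(C.Z(m))\le \alpha\sum_{m\in S_{D,M}} g_P(m)+o(M^{(b+1)/2})
\]
with a fixed $\alpha<1$. The steps are as follows.
\begin{enumerate}
\item Truncate the sum over levels at $T\asymp M^{\epsilon_0}$.
\item For $n\le T$, split $\Theta_{L_n}=E_n+G_n$. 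Bound $\sum_{n\le T}q_{E_n}(m)\le\alpha g_P(m)$ uniformly in $T$, via the Siegel mass formula and the local-density computation of \cite{maulik2022reductions,maulik2022picard}.
\item Bound the cusp coefficients uniformly over the growing family using Waibel's estimate together with polynomial control of $\operatorname{disc}\tilde L_n'$.
\item For $n>T$, use the geometry-of-numbers bound restricted to norms $D\ell^2$ (Proposition~\ref{prop.square}) with the successive-minima lower bound.
\end{enumerate}
Summing over the finitely many points gives a total local contribution at most $\alpha'(\omega\cdot C)\sum_m|q_L(m)|+o(M^{(b+1)/2})$ with $\alpha'<1$. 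This contradicts Proposition~\ref{prop.global}. Without this strict inequality against the $h_P$-weights, the argument does not close.
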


\begin{rem}
    For $b \geq 3$, $L$ is a lattice of rank at least 5. Consequently, every square class $D > 0$ is represented, and the statement is true for any choice of $D$. 
\end{rem}

\begin{rem}
    The properness condition can be removed via an argument as in \cite{tayou2024picard}, which uses a toroidal compactification to bound the boundary contributions. 
\end{rem}

In a slightly different direction, Jiang, Shankar, and Yang prove in \cite{jiang2026picard} that with suitable large monodromy and non-exceptionality hypotheses, the Noether-Lefschetz locus is dense. This result does not restrict to a family of divisors corresponding to a specific square class. On the other hand, shortly before posting this present paper, we learned from the authors of the forthcoming paper \cite{jiang2026sintegral} that they proved a more general theorem, which includes our result as a special case.

On the other hand, we provide a density 0 result that applies to the generic Mumford-Tate condition $F = \mathbb{Q}$. 

\begin{theorem}\label{thm.sqclass}
    Let $X$ be as in Theorem \ref{thm.main} and suppose $F = \mathbb{Q}$. Then a density 0 set of places $p$ have an extra line bundle $L \in \NS(\overline{X_p})$ such that $Q(L) = Dm^2$ for a fixed $D$ and any $m$.  
\end{theorem}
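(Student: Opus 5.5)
The strategy is to translate the condition on $X_p$ into a condition on the characteristic polynomial of Frobenius acting on the transcendental part, and then apply a Chebotarev-type argument to the $\ell$-adic images for infinitely many $\ell$. Since $F=\mathbb{Q}$, the Mumford--Tate group of $V$ is the full $\SO(V)$ (Zarhin). By the Mumford--Tate conjecture for K3 surfaces (Tankeev, André) in the number field case, and by the ordinary analogue of \ref{subsec.MTff} in the function field case, the $\ell$-adic monodromy has identity component $\SO(V_{\mathbb{Q}_\ell})$. We also use the fact that the image of Galois is open in the orthogonal group for all $\ell$, and contains $\SO$ of the $\mathbb{Z}_\ell$-lattice for $\ell$ large.

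\textbf{Step 1: reduce the lattice condition to a Frobenius condition.} Let $T$ be the transcendental lattice and $V=T_{\mathbb{Q}}$. Suppose $\NS(\overline{X_p})$ contains an extra class $L$ with $Q(L)=Dm^2$. Project $L$ orthogonally to $\NS(\overline X)^\perp$. If $L$ is new, this projection is nonzero, and it lies in the Tate-class subspace $W_p\subset V_\ell$, the part fixed by a power of $\mathrm{Frob}_p$. Replacing $L$ by a combination with the old classes changes its norm, so the statement is really about the lattice $\NS(\overline{X_p})$ representing $Dm^2$. For density purposes we first discard the density zero set of $p$ with $\rho$ jump at least $2$ beyond the generic behaviour. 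Theorem \ref{thm.main} identifies which jumps occur with positive density. Generically the jump is by $\rho(\overline X)$ parity: by $1$ when $\dim V$ is odd, and by $2$ when $\det\mathrm{Frob}_p=-1$ and $\dim V$ is even.

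\textbf{Step 2: express the discriminant condition as a Chebotarev condition.} For $p$ with minimal jump, $\NS(\overline{X_p})\otimes\mathbb{Q}=\NS(\overline X)_{\mathbb{Q}}\oplus W_p$. Here $W_p$ is a $1$- or $2$-dimensional quadratic space over $\mathbb{Q}$, and $\NS(\overline X)_\mathbb{Q}$ is fixed. Representing the square class $D$ by a vector not lying in $\NS(\overline X)_\mathbb{Q}$ is then a condition on the isometry class of $W_p$, which by Tate is determined $\ell$-adically: $W_p\otimes\mathbb{Q}_\ell$ is the $\pm1$-eigenspace of $\mathrm{Frob}_p$ on $V_\ell$. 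In the $1$-dimensional case, $\operatorname{disc} W_p$ is determined by $\operatorname{disc}\NS(\overline{X_p})$, which in turn is determined via $\operatorname{disc}(V)$ and the discriminant of the complement $W_p^\perp\subset V$, on which Frobenius acts without eigenvalue $\pm1$. I expect this to be recoverable from the characteristic polynomial $P_p(t)$ of $\mathrm{Frob}_p$, for instance through the value $P_p(1)P_p(-1)$ up to squares, as in the Artin--Tate formula. The requirement that $\mathbb{Q}\langle a\rangle\oplus\NS(\overline X)_\mathbb{Q}$ represent $D$ away from the old classes then forces $a$ to lie in finitely many square classes times $\{Dm^2\}$. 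Via the local-global principle for binary and ternary forms, this is a condition at each prime $\ell$.

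\textbf{Step 3: density bound.} For each prime $\ell$ not dividing $2D\operatorname{disc}(L)$, the condition from Step 2 says that the $\ell$-adic square class of an explicit function of $\mathrm{Frob}_p$ — the Artin--Tate quantity, viewed through the mod $\ell$ or mod $\ell^2$ image — lies in a prescribed subset. For example, $\ell$ must divide it to even order, or the quantity must be a square mod $\ell$. By Chebotarev applied to the image in $\mathrm{O}(T/\ell^2)$, which is essentially full, the proportion of Frobenius elements satisfying this condition is at most $c<1$, uniformly in $\ell$. The fraction is roughly $1/2$, from the nonsquare classes, or $1-1/\ell$ for the divisibility condition. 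For a finite set $S$ of such $\ell$, the mod-$\prod_{\ell\in S}\ell^2$ images are independent by Goursat-type arguments (Serre's independence, or its function-field analogue), so the upper density is at most $c^{|S|}$. Letting $|S|\to\infty$ gives density $0$.

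\textbf{Main obstacle.} The crux is Step 2: showing that "$\NS(\overline{X_p})$ represents $Dm^2$ by a new class" really constrains a Frobenius-determined invariant at infinitely many independent primes $\ell$, while $m$ is allowed to vary. The freedom in $m$ kills all conditions at the primes dividing $m$, except the parity of valuations. I would handle this using the discriminant or spinor-norm of the $\pm1$-eigenspace: the condition becomes "a certain rational number built from $P_p(\pm1)$ lies in $D\cdot\mathbb{Q}^{\times2}$ up to finitely many classes". Being in a fixed rational square class is a genuine condition modulo $\ell$ for every $\ell$, which is what the independence argument needs. The $2$-dimensional ($\det=-1$) case needs the same argument applied to the binary form $W_p$, whose discriminant is again read off from $P_p$.
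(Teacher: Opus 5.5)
Your reduction in Steps 1--2 does not work as set up. You read ``extra line bundle'' as any class of $\NS(\overline{X_p})$ outside $\mathrm{sp}(\NS(\overline X))$. You then claim that $\NS(\overline X)_{\mathbb{Q}}\oplus\langle a\rangle$ representing $D$ off the old classes forces $a$ into finitely many square classes times $D$. That claim is false.

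If $\rho(\overline X)\ge 3$ is odd, the jump is $1$ at a density one set of places. At such places this space is indefinite of rank $\ge 4$, so by Meyer it represents every nonzero rational. The solutions are Zariski dense on the quadric, hence some lie off the old hyperplane. Under your reading the theorem itself is false.

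Even for $\rho(\overline X)=1$, the form $\langle -2d,a\rangle$ represents $D$ by a new vector whenever $a\in(D+2dk^2)\mathbb{Q}^{\times 2}$, which is infinitely many classes. Moreover, for $\ell\nmid 2dD$ with $a$ an $\ell$-unit, the ternary form $\langle -2d,a,-D\rangle$ is isotropic over $\mathbb{Q}_\ell$, so a mod $\ell$ sieve sees nothing.

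The paper works with $L\in W_p=\mathrm{sp}(\NS(\overline X))^{\perp}\cap\NS(\overline{X_p})$. At a jump-one place $W_p=\mathbb{Q}u$, and the condition is exactly $Q(u)\in D\mathbb{Q}^{\times 2}$. With that reading your characteristic-polynomial idea is sound. For $g\in\Or(U)$ with no eigenvalue $\pm1$, the form $B((g-g^{-1})x,y)$ is alternating and nondegenerate, so $\operatorname{disc}U\equiv\det(g-g^{-1}|_U)$ modulo squares. Hence $Q(u)\equiv\operatorname{disc}(V)\det(g-g^{-1}|_{W_p^{\perp}})$, and mod $\ell$ this is exactly the eigenline square class tested by the paper's $C_{D,\ell}$.

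You still owe the count the paper supplies in Lemmas \ref{lemma.lines}, \ref{lemma.B} and Proposition \ref{prop.proportion}: the condition occupies a proportion $\tfrac12+O(\ell^{-1})$ of \emph{each} coset of $\Omega_\ell$. The paper gets this from orbit--stabilizer for lines of a given square class, equidistribution of the spinor norm on $\SO(L^{\perp})$, and Lang--Weil for $r(x)\ge 2$. Uniformity on every coset is what Propositions \ref{prop.Zywina} and \ref{prop.chavdarov} require, since Frobenius elements lie in prescribed cosets. ``Roughly $1/2$'' on the whole group is not enough.

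Your treatment of the binary case ($\dim V$ even, $\det\mathrm{Frob}_p=-1$) also fails. Whether a binary form represents $D$ depends on its Hasse invariants, not only on its discriminant. Moreover, at every $\ell\nmid 2D\operatorname{disc}(V)$ at which $W_p$ is unimodular, $W_p\otimes\mathbb{Q}_\ell$ represents every $\ell$-adic unit. So there is no mod $\ell$ or mod $\ell^2$ condition of proportion bounded away from $1$. The only information sits at primes dividing $\operatorname{disc}W_p$, with savings of size $O(1/\ell)$, so a bound of the form $c^{|S|}$ is not available. This is exactly the obstruction recorded in the paper's closing remark. The paper's proof covers only the generic-jump-one case ($F=\mathbb{Q}$, $\dim V$ odd) and leaves jump-two components open, so this part of your plan would need a genuinely new sieve.
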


\begin{rem}
    Some level of Mumford-Tate genericity is necessary. For instance, at a place $p$ of $K$ where the reduction $X_p$ satisfies
    \[\rho(\overline{X_{p}}) \geq \rho(\overline{X}) + 4  \]
    there are new divisor classes representing every single square class. Thus, in the case where Theorem \ref{thm.main} yields a positive density of places satisfying the above condition, a density 0 result is false, see Lemma \ref{lemma.meyer}. 
\end{rem}

\subsection{Proof strategies} 
We briefly describe the strategies in proving our theorems, starting with the strategy of Theorem \ref{thm.main}. Following \cite{charles2014picard}, we study the Galois representation 
\[\rho_{\ell}: \Gal_K \rightarrow \Or(H^2_{\text{ét}}(X, \mathbb{Q}_{\ell}(1)) \]
where $\Gal_K$ is the absolute Galois group of $K$. In particular, the Tate conjecture states that subspaces of the second cohomology group invariant under the Galois action are spanned by algebraic cycles. Thus studying geometric Picard rank of the reduction corresponds to studying the number of Frobenius eigenvalues that are equal to a root of unity. 

To study the Frobenius eigenvalues, it helps to have control on the \emph{$\ell$-adic monodromy group} $G_{\ell}$, an algebraic group defined by taking the Zariski closure of the image of $\rho_{\ell}$. Over a number field, a key input is the Mumford-Tate conjecture for K3 surfaces, proven by Tankeev in \cite{tankeev1990k3,tankeev1995surfaces}. It states that the neutral component of monodromy $G_{\ell}^{\circ}$ agrees with the $\mathbb{Q}_{\ell}$-Mumford-Tate group. 

To replace the above input in the function field case, we use Jiang's analog of the Mumford-Tate conjecture for a generically ordinary family of K3 surfaces in \cite{jiang2023p}. Another key novelty of our proof is to study the entire $\ell$-adic monodromy group rather than passing to a field extension $K^{conn}$ of $K$ such that the $\ell$-adic monodromy group becomes connected. To do so, we describe the components of the monodromy group, following the ideas of Sawin in \cite{sawin_ordinary_2015}, \cite{sawin_distribution_nodate}, and Cantoral-Farfan, Li, Mantovan, Pries, and Tang in \cite{cantoral_farfan_positive_2025}. In particular, we describe the component group as a subquotient of the normalizer of $G_{\ell}^{\circ}$, and describe this latter group. We provide a simpler description than the proof of \emph{loc. cit.}, which must first pass through a subquotient of a Weyl group. Using this description, we understand what a generic element of each component of the $\ell$-adic monodromy looks like, and then prove a Chebotarev density result on $\ell$-adic Lie groups to conclude. 

The above strategy is a bit too coarse for the proof of Theorem \ref{thm.sqclass}, as it does not distinguish the norms of each new line bundle. Instead, we use a sieve method similar to Chavdarov in \cite{chavdarov1995generic} and Zywina in \cite{zywina2014splitting}. See \ref{sec.four} for a more detailed discussion of our strategy.




\subsection{Organization of the paper.} In \S 2, we review some preliminaries on the algebraic monodromy group for a K3 surface defined over a global field and define GSpin Shimura varieties and their special divisors. In \S 3, we describe the components of the algebraic monodromy group, use this to prove Theorem \ref{thm.main}, and provide examples of our formula in use. In \S 4, we use the methods of Maulik, Shankar, and Tang to prove Theorem \ref{thm.infinitude}, concerning a density 0 set of special divisors indexed by a rational single square class. Finally, in \S 5, we prove Theorem \ref{thm.sqclass} and discuss the necessity of a Mumford-Tate genericity condition. 

\subsection{Acknowledgements and AI use statement} I am grateful to my advisor Yunqing Tang for introducing this problem for me and being a wellspring of advice and encouragement. I would also like to thank Fangu Chen, Ruofan Jiang, Wanlin Li, and Yuji Okitani for helpful discussions. This work was partially supported by the NSF GRF grant DGE 2146752 and the NSF grant DMS-2342225. 

Large language models suggested some ideas for the proofs of Lemma \ref{lemma.meyer} and \ref{lemma.lines}. While these lemmas are important for the proofs of our results, they are relatively simple, technical points and not constitute main ideas of our work. Additionally, all writing was done by the author.

\section{Preliminaries}

In this section, we review the necessary background for our paper, starting with the case where $K$ is a number field.

\subsection{K3 Surfaces and the Mumford-Tate conjecture over a number field}\label{subsec.mtnumfield}

Let $X$ be a K3 surface over a number field $K$. Fix an embedding $\iota: K \hookrightarrow \mathbb{C}$ and let $X_{\iota} = X \times_{K, \iota} \mathbb{C}$. We put $H^i_B(X, \mathbb{Q}) := H^i(X_{\iota}(\mathbb{C}), \mathbb{Q})$ and let $\NS(X) := \NS(X_{\iota})$ to be the geometric Neron Severi group of $X$. 

Consider the weight 2 Hodge structure on $H^2_B(X, \mathbb{Q})$ with polarization given by the cup product. We have the decomposition 
\[H^2_B(X, \mathbb{Q}) = \NS(X)_{\mathbb{Q}} \oplus V \]
where the rational transcendental Hodge structure $V$ is the orthogonal complement of $\NS(X)_{\mathbb{Q}}$ under the cup product. By the Lefschetz Theorem on $(1,1)$ classes, $V$ is the minimal sub-Hodge structure of $H^2(X, \mathbb{Q})$ such that 
\[H^{0,2}(X) \oplus H^{2,0}(X) = H^2(X_{\iota}, \mathcal{O}_X) \oplus H^0(X_{\iota}, \omega_X) \subseteq V \otimes \mathbb{C} \]
The Hodge structure $V$ has polarization 
\[\psi: V \otimes V \to \mathbb{Q}(-2) \]
induced by the cup product on $H^2(X, \mathbb{Q})$. 

By \cite[Theorems 1.5.1 and 1.6]{zarhin1983hodge}, the algebra of Hodge structure endomorphisms $\End_{HS}(V)$ is a totally real or CM field $F$, whose maximal totally real field is $F^+$ with degree $[F^+: \mathbb{Q}] = d$. The polarization induces an involution $e \mapsto e'$ on $F$, which is complex conjugation when $F$ is CM and the identity when $F$ is totally real. Let $V_F$ denote $V$ considered as an $F$ vector space, with $\dim_F V_F = n$. Thus, in the totally real case $\dim V = dn$ and in the CM case $\dim V = 2dn$. 

Let $\mathbb{S} := \Res_{\mathbb{C}/\mathbb{R}} \mathbb{G}_m$ denote the Deligne torus and recall that the data of a Hodge structure on a $\mathbb{Q}$ vector space $H$ is equivalent to the data of a representation 
\[\lambda: \mathbb{S} \rightarrow \GL(H_{\mathbb{R}}). \]
\begin{defn}
    We define \emph{the Mumford Tate group} of the rational Hodge structure $H$ to be the smallest algebraic subgroup $\MT(H) \subseteq \GL(H)$ such that $\MT(H)_\mathbb{R}$ contains $\lambda(\mathbb{S})$. 
\end{defn}

\begin{rem}\label{rem.absolutehodge}
    Since Hodge cycles on products of K3 surfaces are absolute Hodge (see \cite[Chapter I, 6.26]{deligne2009hodge}), our definitions of the Mumford-Tate group and the endomorphism field $F$ are independent of our choice of embedding $K \hookrightarrow \mathbb{C}$. 
\end{rem}

The Mumford-Tate Group of a polarized Hodge structure is a connected reductive group. Further, given any tensor construction
\[\bigoplus_{a,b \in \mathbb{Z}_{\geq 0}} H^a \otimes (H^{\vee})^b\]
with Hodge structure inherited from $H$, sub-Hodge structures are exactly the subspaces invariant under the action of $\MT(H)$ and $(0,0)$-Hodge tensors are exactly the invariants under the Mumford-Tate group. In particular, we conclude 
\[ (V \otimes V^{\vee})^{\MT(V)} = \End_{HS}(V) =  F \]
meaning $\MT(V)$ centralizes $F$ in the orthogonal similitude group. We may strengthen this observation using the following result of Zarhin. 

\begin{proposition}[{\cite[Theorem 2.2.1]{zarhin1983hodge}}]\label{prop.centralizer}
    The Mumford-Tate group of $V$ is the centralizer of $F$ in the group of orthogonal similitudes $\mathrm{GO}(V, \psi)$. 
\end{proposition}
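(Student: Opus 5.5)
The strategy is to compute the Mumford--Tate group by sandwiching it between two groups. Let $Z := Z_{\mathrm{GO}(V,\psi)}(F)$ be the centralizer of $F$ in the orthogonal similitude group. The discussion preceding the proposition already gives $\MT(V) \subseteq Z$: $\MT(V)$ fixes every $(0,0)$-Hodge tensor, and $F = \End_{HS}(V) = (V\otimes V^\vee)^{\MT(V)}$. It also preserves $\psi$ up to scalar, since $\psi$ is a morphism of Hodge structures to $\mathbb{Q}(-2)$. So the work is in the reverse inclusion. I would prove it by showing that both groups are connected of the same dimension, checked after base change to $\overline{\mathbb{Q}}$ or $\mathbb{R}$. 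The invariant-theoretic description of $\MT(V)$ enters only through the equality of commutants.

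\textbf{Step 1: structure of $Z$.} Extend scalars along the embeddings $\sigma : F \hookrightarrow \overline{\mathbb{Q}}$, giving $V\otimes\overline{\mathbb{Q}} = \bigoplus_\sigma V_\sigma$.

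In the totally real case, $\psi$ is $F$-linear in the sense that $\psi(ex,y) = \psi(x,ey)$. So $\psi = \tr_{F/\mathbb{Q}}\circ \phi$ for a nondegenerate symmetric $F$-bilinear form $\phi$ on $V_F$. Then $Z \cap \Or(V,\psi) = \Res_{F/\mathbb{Q}}\Or(V_F,\phi)$, and $Z$ is generated by this group together with the scalars.

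In the CM case, $\psi(ex,y) = \psi(x,\bar e y)$ yields a hermitian form $\phi$ over $F/F^+$. Then $Z \cap \Or = \Res_{F^+/\mathbb{Q}} U(V_F,\phi)$, which is connected.

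\textbf{Step 2: the Mumford--Tate group is as large as allowed.} Consider the derived group $\MT(V)^{der}_{\overline{\mathbb{Q}}}$. It acts on each $V_\sigma$, and its commutant in $\End(V_{\overline{\mathbb{Q}}})$ is exactly $F\otimes\overline{\mathbb{Q}} = \prod_\sigma \overline{\mathbb{Q}}$, up to the central torus. Hence each $V_\sigma$ is irreducible and the $V_\sigma$ are pairwise non-isomorphic (respectively, come in dual pairs in the CM case).

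The key input is the weight-2, $h^{2,0}=1$ shape of the Hodge structure. In the totally real case, the Hodge cocharacter acts on exactly one $V_\sigma$ with weights $(-1,0,\dots,0,1)$, and trivially on the others up to Galois conjugation. On each $V_\sigma$ it therefore acts like a minuscule-type cocharacter of $\SO(V_\sigma)$. The $\mathrm{Gal}$-conjugates of $\lambda$ generate $\MT$, so the image of $\MT^{der}$ in $\SO(V_\sigma)$ is an irreducible connected subgroup containing an element with eigenvalues $(t^{-1},1,\dots,1,t)$, i.e.\ a ``rotation in a 2-plane'' torus.

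A classical lemma (Zarhin's, going back to Serre/Kazhdan-type results on irreducible groups generated by such tori, or the classification of minuscule weights) then forces the image to be all of $\SO(V_\sigma)$. The CM case is analogous with $U(V_\sigma)$ and the pseudo-reflection-type cocharacter. Finally, Goursat's lemma, combined with the fact that the factors are pairwise non-isomorphic and permuted transitively by Galois, gives $\MT^{der} = Z^{der}$. Adding the central torus, which contains the weight cocharacter and matches the center of $Z^\circ$ by the rank count, gives $\MT(V) = Z^\circ$.

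\textbf{Step 3: connectedness.} In the totally real case $Z$ may be disconnected because of the $\Or$ factors, so one must interpret ``centralizer'' as the identity component, or note that $Z$ is connected after taking the similitude group. I expect the main obstacle to be Step 2's irreducibility lemma: showing that an irreducible connected subgroup of $\SO(n)$ containing such a rank-one cocharacter must be all of $\SO(n)$. I would first try to reduce this to the classification of irreducible representations with a minuscule-like weight having multiplicity pattern $(1,n-2,1)$. Such a representation must be the standard representation of $\SO_n$, apart from small exceptions ($\mathrm{SL}_2$ in dimension 3, spin cases in low dimension). These exceptions are excluded or coincide with $\SO$.
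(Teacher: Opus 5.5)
The paper gives no argument for this beyond citing \cite[Theorem 2.2.1]{zarhin1983hodge}. Your outline is the standard proof of that theorem, so the route is right, but two steps are justified incorrectly. The ingredients of the standard proof are:
$\MT(V)\subseteq Z$ from $\End_{HS}(V)=F$; irreducibility of each $V_\sigma$; the Hodge cocharacter, which acts with weights $(1,0,\dots,0,-1)$ on the single eigenspace $V_{\sigma_0}$ containing $V^{2,0}\oplus V^{0,2}$ (resp.\ $(1,0,\dots,0)$ and $(-1,0,\dots,0)$ on $V_{\sigma_0}$ and $V_{\bar\sigma_0}$ when $F$ is CM) and trivially elsewhere; and transitivity of $\Aut(\mathbb{C})$ on the embeddings of $F$.

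\textbf{First error: connectedness.} Your fallback in Step 3 is false. For $F$ totally real, $Z=\mathbb{G}_m\cdot\Res_{F/\mathbb{Q}}\Or(V_F,\phi)$ has component group $(\mathbb{Z}/2\mathbb{Z})^d$ when $n$ is even. When $n$ is odd it is $(\mathbb{Z}/2\mathbb{Z})^{d-1}$, because the scalar $-1$ only absorbs the simultaneous flip. So $Z$ is connected only when $F=\mathbb{Q}$ and $n$ is odd. The statement is true only for $Z^\circ=\mathbb{G}_m\cdot\Res_{F/\mathbb{Q}}\SO(V_F,\phi)$, which is how the paper actually uses it in \eqref{eq.neutralcomp}. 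The discarded components are the block reflections that reappear as the $(\mathbb{Z}/2\mathbb{Z})^d$ factor in Proposition \ref{prop.normalizer}.

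\textbf{Second error: the assembly step.} Pairwise non-isomorphism of the $V_\sigma$ does not rule out twisted diagonals.
\begin{itemize}
\item For $n=8$, take the graph of a triality automorphism of $\mathfrak{so}_8$ inside $\mathfrak{so}(V_\sigma)\times\mathfrak{so}(V_\tau)$. It makes $V_\tau$ a half-spin module, which is still non-isomorphic to $V_\sigma$.
\item In the CM case, take a graph of $g\mapsto (g^{T})^{-1}$ between $\mathrm{SL}$-factors attached to different pairs. This is compatible with the commutant being $F\otimes\mathbb{C}$, since the centre still separates the modules.
\end{itemize}
What excludes these is the single-support property you already observed. The Hodge element $h$ (its derived part, in the CM case) lies in $\mathfrak{mt}_{\mathbb{C}}$ and vanishes on every factor except the one attached to $\sigma_0$. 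It also has nonzero component in each simple ideal of that factor (both ideals when $n=4$). Hence the elements of $\mathfrak{mt}^{der}_{\mathbb{C}}$ supported on the $\sigma_0$-factor form a nonzero ideal of the full projection, so they are all of it. The $\Aut(\mathbb{C})$-conjugates then give every factor, and Goursat is not needed.

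\textbf{Smaller points.}
\begin{itemize}
\item Irreducibility should be derived for $\MT$, not $\MT^{der}$: for $n=2$ in the CM case, $V_\sigma\cong V_{\bar\sigma}$ as $\mathrm{SL}_2$-modules.
\item Your CM ``rank count'' amounts to the following. The $\det_F$-component $e_{\sigma_0}-e_{\bar\sigma_0}$ of the Hodge cocharacter has Galois conjugates $e_\tau-e_{\bar\tau}$, and these span the $d$-dimensional cocharacter space of $\Res_{F^+/\mathbb{Q}}U(1)$.
\end{itemize}

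\textbf{The lemma you expected to be the main obstacle.} It needs no classification of minuscule representations. Since $h\in\mathfrak{g}$, the algebra $\mathfrak{g}$ is $\mathrm{ad}(h)$-graded in degrees $-1,0,1$. Irreducibility then gives $W=W_1\oplus\mathfrak{g}_{-1}W_1\oplus\mathfrak{g}_{-1}^2W_1$, so $\mathfrak{g}_{-1}W_1=W_0$. An element of $\mathfrak{so}(W)_{-1}$ is determined by its restriction $W_1\to W_0$, so this forces $\mathfrak{g}_{-1}=\mathfrak{so}(W)_{-1}$. Symmetrically $\mathfrak{g}_1=\mathfrak{so}(W)_1$, and these two pieces generate $\mathfrak{so}(W)$. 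The same argument with $\dim W_1=1$, applying the dual module for $\mathfrak{g}_1$, gives $\mathfrak{gl}_n$ in the CM case.
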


Now we introduce the $\ell$-adic picture. Fix an algebraic closure $\bar{K}$ of $K$ together with an extension $\bar{\iota}: \bar{K} \hookrightarrow \mathbb{C}$ of $\iota$ and let $\overline{X} = X_{\bar{K}}$. Under the action of the absolute Galois group on $H^2_{\text{ét}}(\overline{X}, \mathbb{Q}_{\ell}(1))$, the $\mathbb{Q}_{\ell}$ span of the Neron Severi group $\NS(\bar{X})_{\mathbb{Q}_{\ell}}$ and its orthogonal complement $V_{\ell}(1)$ under the cup product are both globally invariant. As its notation suggests, by the Betti-étale comparison $V_{\ell}(1) \cong V \otimes \mathbb{Q}_{\ell}(1)$. By the Galois-equivariance of cup product and Poincaré duality in Galois cohomology, the polarization 
\[H^2(X,\mathbb{Q}_{\ell}) \otimes H^2(X,\mathbb{Q}_{\ell}) \rightarrow \mathbb{Q}_{\ell}(-2) \]
gives a Galois invariant bilinear form 
\begin{equation}\label{eq.galoiseq}
    \psi_{\ell}: V_{\ell}(1) \otimes V_{\ell}(1) \rightarrow \mathbb{Q}_{\ell}
\end{equation}
Thus, for a fixed prime $\ell$, we have the Galois representation
\[\rho_{\ell}: \Gal(\bar{K}/K) \rightarrow \mathrm{O}(V_{\ell}(1)). \]
Note that the Tate twist introduced makes $\rho_{\ell}$ land in the orthogonal group rather than the group of orthogonal similitudes. 
\begin{defn}
    The \emph{transcendental $\ell$-adic monodromy group} $G_{\ell}$ of $X$ is the Zariski closure of the image of $\rho_{\ell}$ in the orthogonal group $O(V_{\ell}(1),\psi_{\ell})$. 
\end{defn}
For a finite extension $K'/K$, since $G_{K'}$ is an open subgroup of finite index in $G_{K}$, whence the neutral component $G^{\circ}_{\ell}$ is invariant under taking finite base extensions. Further, the representations
\[\{\rho_{\ell}: \Gal(\bar{K}/K) \rightarrow \GL(V_{\ell}(1))  \}_{\ell \neq \mathrm{char}(K)} \]
form a compatible system in the sense that the characteristic polynomial of the $\rho_{\ell}$ image of a Frobenius element has $\mathbb{Q}$-coefficients and is independent of $\ell$. Indeed, the Weil conjectures imply this for the full cohomology group, and the Néron-Severi part is the scalar extension of a finite-image integral representation and is thus compatible. Additionally, these representations are semisimple using Faltings' semisimplicity theorem \cite{faltings1983endlichkeitssatze} and the Kuga-Satake construction (see \S 3 for a definition). Thus by \cite[133]{serre2013oeuvres} and \cite[Proposition 6.14]{larsen1992independence}, the group $\rho_{\ell}^{-1}(G_{\ell}^{\circ}(\mathbb{Q}_{\ell})) \subset \Gal(\bar{K}/K)$ is independent of the choice of $\ell$. This allows us to make the following definition. 
   
\begin{defn}
    Let $K^{conn}$ be the smallest finite extension of $K$ over which the transcendental $\ell$-adic monodromy group becomes connected. More precisely, let $K^{conn}$ be the subfield of $\bar{K}$ fixed by $\rho_{\ell}^{-1}(G_{\ell}^{\circ}(\mathbb{Q}_{\ell}))$. Thus, the component group $\pi_0 := \pi_0(G_{\ell})$ is isomorphic to $\Gal(K^{conn}/K)$.
\end{defn}

The above definitions hold in more general situations, and it is conjectured that for a smooth projective variety, the Mumford-Tate group is related to the algebraic monodromy group as follows. 

\begin{conj}[The Mumford-Tate Conjecture]
    Let $X$ be a smooth projective variety over a number field $K$ with a fixed embedding $\iota: K \hookrightarrow \mathbb{C}$. Let $G_{\ell}$ be the $\ell$-adic monodromy group attached to the étale cohomology group $H^i_{\text{ét}}(\overline{X}, \mathbb{Q}_{\ell})$. There exists a canonical isomorphism  
    \[G^{\circ}_{\ell} \cong \MT(H^i(X_{\iota}(\mathbb{C}), \mathbb{Q}))_{\mathbb{Q}_{\ell}} \]
    where $X_{\iota}$ is the base change of $X$ along $\iota$. 
\end{conj}

In our setting where $X$ is a K3 surface over a number field and $i=2$, the Mumford-Tate conjecture was proven by Tankeev. Note that by Remark \ref{rem.absolutehodge}, the choice of embedding $K \hookrightarrow \mathbb{C}$ does not affect the statement. 

\begin{theorem}[Tankeev, {\cite{tankeev1990k3}, \cite{tankeev1995surfaces}}]\label{thm.MT}
    The Mumford-Tate conjecture is true for the second cohomology group of a K3 surface defined over a number field. Consequently, it also holds for the transcendental part $V$, meaning under the Betti-étale comparison isomorphism 
    $V(1) \otimes \mathbb{Q}_{\ell} \cong V_{\ell}(1)$ we have 
    \[G_{\ell}^{\circ} \cong \MT(V(1))_{\mathbb{Q}_{\ell}} = C_{\mathrm{SO}(V(1),\psi)}(F).\]
\end{theorem}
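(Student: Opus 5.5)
Tankeev's theorem for the full $H^2$ is cited, so the only thing to prove is the ``consequently'' part. It has two pieces: the Mumford--Tate conjecture for the transcendental summand, and the identification of the relevant Mumford--Tate group with $C_{\mathrm{SO}(V(1),\psi)}(F)$.

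\textbf{Step 1: passing from $H^2$ to the transcendental summand.} Write $H^2(1) = \NS_{\mathbb{Q}} \oplus V(1)$. The Néron--Severi part is spanned by $(0,0)$ Hodge classes, so $\MT(H^2(1))$ acts trivially on it. Hence restriction to $V(1)$ gives an isomorphism $\MT(H^2(1)) \cong \MT(V(1))$.

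On the $\ell$-adic side, choose a finite extension $K'/K$ over which all of $\NS(\overline X)$ is defined. Then $\Gal_{K'}$ acts trivially on $\NS_{\mathbb{Q}_\ell}$. Since $G^\circ_\ell$ is insensitive to finite base change, the neutral component of monodromy for $H^2(1)$ restricts isomorphically onto the neutral component for $V_\ell(1)$. The two restriction isomorphisms are compatible with the Betti--étale comparison, so Tankeev's isomorphism for $H^2(1)$ descends to $G^\circ_\ell \cong \MT(V(1))_{\mathbb{Q}_\ell}$.

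\textbf{Step 2: from similitudes to isometries.} Proposition \ref{prop.centralizer} describes $\MT(V)$ as the centralizer of $F$ in $\mathrm{GO}(V,\psi)$. After the Tate twist the weight is $0$, so the Hodge cocharacter of $V(1)$ preserves $\psi$ exactly rather than up to scalar. The minimal group containing $\lambda(\mathbb{S})$ is therefore the analogous centralizer inside the isometry group, namely $C_{\Or(V(1),\psi)}(F)$ intersected with the connected group generated by the Hodge cocharacter.

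To land in $\mathrm{SO}$, I use connectedness: $\MT$ is connected. The remaining check is that $C_{\Or}(F)$ has $C_{\mathrm{SO}}(F)$ as its neutral component.
\begin{itemize}
\item \textbf{Totally real case.} Over $\bar{\mathbb{Q}}$ the centralizer splits as $\prod \Or(V_\sigma)$ over the embeddings $\sigma$ of $F$, and the neutral component is $\prod \mathrm{SO}(V_\sigma)$.
\item \textbf{CM case.} The centralizer is a unitary group $\mathrm{U}(V_F)$, which is connected and has determinant $1$ on $V$ viewed as a $\mathbb{Q}$-space.
\end{itemize}

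\textbf{Main obstacle.} In the totally real case, $\prod \mathrm{SO}(V_\sigma)$ is strictly smaller than $C_{\mathrm{SO}(V(1),\psi)}(F)$, because the determinant-one condition on the whole of $V$ allows sign combinations across the embeddings. So the displayed equality must be read either as ``neutral component of $C_{\mathrm{SO}}(F)$'' or as the Weil restriction $\Res_{F/\mathbb{Q}}\mathrm{SO}(V_F)$. The reading I mean is $\Res_{F/\mathbb{Q}}\mathrm{SO}(V_F)$ in the totally real case and $\Res_{F^+/\mathbb{Q}}\mathrm{U}(V_F)$ in the CM case. These are exactly what Zarhin's theorem gives, with the similitude factor removed by the twist.
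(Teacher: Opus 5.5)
Your proof is the paper's argument. Step~1 is exactly its reduction from $H^2(1)$ to $V(1)$: the Mumford--Tate group fixes $\NS$, Galois fixes $\NS_{\mathbb{Q}_\ell}$ after a finite extension, and $G_\ell^\circ$ is insensitive to base change. Step~2 is its appeal to Zarhin's centralizer description together with the Tate twist and connectedness.

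Your caveat is correct, and it is sharper than the paper's remark that $\mathrm{SO}$ is the neutral component of $\Or$. It also corrects your own ``remaining check'', which fails as stated. When $F$ is totally real with $[F:\mathbb{Q}]\geq 2$, the group $C_{\mathrm{SO}(V(1),\psi)}(F)$ has geometric component group $(\mathbb{Z}/2\mathbb{Z})^{d-1}$. So the displayed equality must be read as its neutral component $\Res_{F/\mathbb{Q}}\mathrm{SO}(V_F,\phi)$; for $F=\mathbb{Q}$ or $F$ CM there is no discrepancy. This connected form is the one the paper actually uses in \eqref{eq.neutralcomp}, Lemma~\ref{lemma.blocks} and Proposition~\ref{prop.normalizer}.
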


\begin{proof}
    The statement for the full second cohomology group was proven by Tankeev. 
    
    For the latter statement, note that we have the decomposition of the Tate twisted Hodge structures
    \[H^2_B(X,\mathbb{Q})(1) = \NS(X)(1) \oplus V(1) \]
    where the Mumford-Tate group acts trivially on the first summand because $\NS(X)$ is spanned by Hodge classes. Therefore the Mumford-Tate groups of $H^2_B(X,\mathbb{Q})(1)$ and $V(1)$ are equal. 
    
    On the $\ell$-adic side, note that the Galois group acts trivially on $\NS(\bar{X})_{\mathbb{Q}_{\ell}}$ after a finite extension of $K$. Since the neutral component of monodromy is insensitive to base extensions, the Mumford-Tate conjecture holds for $V(1)$. The final equality is by Proposition \ref{prop.centralizer}, Galois equivariance of \ref{eq.galoiseq}, and the fact that $\mathrm{SO}$ is the neutral connected component of the orthogonal group. 
\end{proof}

We may describe $G^{\circ}_{\ell}$ more explicitly in cases depending on whether $F$ is totally real or CM. Following \cite{zarhin1983hodge}, there exists a unique form $\phi: V \otimes V \to F$ satisfying $\tr_{F/\mathbb{Q}}(a\phi(x,y)) = \psi(ax,y)$. In particular, if $F$ is CM $\phi$ is a $F$-sesquilinear Hermitian form, and if $F$ is totally real, $\phi$ is symmetric $F$-bilinear. Thus, we have 
\begin{equation}\label{eq.neutralcomp}
    G^{\circ}_{\ell} = \begin{cases}
        \Res_{F/\mathbb{Q}}\SO(V_F, \phi)_{\mathbb{Q}_{\ell}} & F \text{ is totally real}\\
        \Res_{F^+/\mathbb{Q}}U(V_F, \phi)_{\mathbb{Q}_{\ell}} & F \text{ is CM}
    \end{cases}
\end{equation}

\subsection{GSpin Shimura varieties and the Kuga-Satake construction}\label{subsec.gspin}

To describe an analog to the Mumford-Tate conjecture in the case of a global function field, we first review some basic definitions and properties of GSpin Shimura varieties and the Kuga-Satake construction following \cite{pera2016integral} and \cite{andreatta2018faltings}. Along the way, we define special divisors on these Shimura varieties corresponding to K3 surfaces with additional algebraic classes in the second cohomology group.

We first note for a K3 surface over $\mathbb{C}$ the transcendental part of the second cohomology group equipped with the negative cup-product is a quadratic lattice of signature $(20-\rho,2)$ by the Hodge Index theorem. This motivates the construction below.

\subsubsection{Canonical integral models of GSpin Shimura varieties}

Let $(L,Q)$ be a quadratic $\mathbb{Z}_{(p)}$-lattice of signature $(b,2)$, $b \geq 1$, and assume that $L$ is self-dual. For a commutative $\mathbb{Z}_{(p)}$-algebra $R$, let $L_R := L \otimes R$ and let $C(L)$ denote the Clifford algebra of $(L,Q)$ with $\mathbb{Z}/2\mathbb{Z}$-grading 
\[C(L) = C^+(L) \oplus C^-(L) \]
into its even and odd parts respectively. We recall the definition of the spinor similitude group $G := \GSpin(L)$ of $(L, Q)$ as the reductive algebraic group over $\mathbb{Z}_{(p)}$ with the functor of points
\[\GSpin(L)(R) = \{g \in C^+(L_R)^{\times} : g (L_R) g^{-1} = L_R \} \]
for any $\mathbb{Z}_{(p)}$-algebra $R$. The group $G$ acts on $V$ via $g \cdot v = gvg^{-1}$ for $v \in L_R$ and is the extension of the special orthogonal group by $\mathbb{G}_m$. 
\[1 \rightarrow \mathbb{G}_m \rightarrow G \xrightarrow{g \mapsto g \cdot } SO(L) \rightarrow 1 \]
Letting $[-,-]$ denote the bilinear form $[x,y] = Q(x+y) - Q(x) - Q(y)$, the group $G(\mathbb{R})$ acts on the Hermitian symmetric domain 
\[D_L =  \{z \in L_{\mathbb{C}} : [z,z] = 0, [z, \bar{z}] < 0 \}/\mathbb{C}^{\times}\]
via the action of $G_{\mathbb{Q}} \rightarrow \SO(L_{\mathbb{Q}})$. Following \cite{andreatta2018faltings}, the pair $(G_{\mathbb{Q}}, D_L)$ defines a Shimura datum with reflex $\mathbb{Q}$. Indeed, for $[z] \in D_L$ arising from $z \in L_{\mathbb{C}}$, let 
\[h_{[z]}: \Res_{\mathbb{C}/\mathbb{R}} \mathbb{G}_m \rightarrow G_{\mathbb{R}}\]
be the unique morphism of algebraic groups over $\mathbb{R}$ that induces the Hodge decomposition on $L_{\mathbb{C}}$ given by 
\[L_{\mathbb{C}}^{1,-1} = \mathbb{C}z, \quad L_{\mathbb{C}}^{0,0} = (\mathbb{C}z \oplus \mathbb{C}\bar{z})^{\perp}, \quad  L_{\mathbb{C}}^{-1,1} = \mathbb{C}\bar{z}. \]
Now let $\mathbb{K} \subset G(\mathbb{A}_f)$ be a compact open subgroup contained in $G(\mathbb{A}_f) \cap C(L \otimes \hat{\mathbb{Z}})^{\times}$, and assume that $\mathbb{K}_p = G(\mathbb{Z}_p)$. Then we have the GSpin Shimura variety $\mathcal{S}_{\mathbb{K}} := Sh(G_{\mathbb{Q}}. D_L)_{\mathbb{K}}$ over $\mathbb{Q}$. By \cite[Theorem 2.3.8]{kisin2010integral}, $\mathcal{S}_{\mathbb{K}}$ admits a canonical smooth integral model $\mathscr{S}_{\mathbb{K}}$ over $\mathbb{Z}_{(p)}$. We will drop the subscript when the level structure is clear from context.

\subsubsection{The Kuga-Satake construction}

We now relate GSpin Shimura varieties to Siegel Shimura varieties parametrizing abelian varieties. Let $H$ denote $C(L)$ with the action of right multiplication by itself. The representation $G \rightarrow C(L)$ induced by left multiplication together with a suitable symplectic form on $H$ yields an embedding of the GSpin Shimura datum into a Siegel Shimura datum
\[(\GSpin(L_{\mathbb{Q}}), D_L) \rightarrow (\GSp(H_{\mathbb{Q}}), \mathfrak{H}^{\pm}). \]
This embedding is called the \emph{Kuga-Satake embedding}. From this embedding, there exists a \emph{Kuga-Satake abelian scheme} $A^{KS} \rightarrow \mathcal{S}$ whose first integral Betti cohomology $\mathbb{H}_B$ is the local system induced by the $G_{\mathbb{Q}}$ action on $H_{\mathbb{Q}}$ (for a description of this local system, see for instance \cite[Section 3]{pera2016integral}). Roughly speaking, the Kuga-Satake abelian scheme is the pullback of the universal abelian scheme over the Siegel Shimura variety along the Kuga-Satake embedding. In fact, this Kuga-Satake abelian scheme extends to an abelian scheme $\mathcal{A}^{KS} \rightarrow \mathscr{S}$, equipped with a left Clifford action. Let $\mathbb{H}_{dR}, \mathbb{H}_{\ell,\text{ét}}$ denote the first relative de Rham cohomology and integral $\ell$-adic cohomology of $\mathcal{A}^{KS} \rightarrow \mathscr{S}$ for $\ell \neq p$. Let $\mathbb{H}_{cris}$ denote the first relative crystalline cohomology of $\mathcal{A}_{\mathbb{F}_p}^{KS} \rightarrow \mathscr{S}_{\mathbb{F}_p}$. 

The left action of $L$ on $H$ via left multiplication induces a $G_{\mathbb{Q}}$-equivariant map 
\[L_{\mathbb{Q}} \rightarrow \End_{C(L)}(H \otimes \mathbb{Q}) \]
and thus there is a local system $(\mathbb{L}_{B}, \mathbb{L}_{dR}, \mathbb{L}_{\ell, \text{et}}, \mathbb{L}_{cris})$ with an embedding 
\begin{equation}\label{eq.realizations}
    \mathbb{L}_{?} \hookrightarrow \End_{C(L)}(\mathbb{H}_{?})
\end{equation}
for each $? \in \{B, dR, \{\ell, \text{ét}\}, cris\}$ compatible with the various comparison isomorphisms. Further, the local system defined above is equipped with a natural quadratic form $Q$ such that $f \circ f = Q(f) \text{Id}$ for a section $f$ of $\mathbb{L}_{?}$.

\subsubsection{Special endomorphisms and divisors}

Let $T$ be a $\mathscr{S}$ scheme and let $A_T^{KS}$ to be the pull-back of the Kuga-Satake abelian scheme to $T$.  

\begin{defn}[{\cite[Definition 4.3.1]{andreatta2018faltings}}]
    For a $\mathscr{S}$ scheme $T$, an endomorphism 
    \[v \in \End_{C(L)}(A_T^{KS})\] 
    is \emph{special} if all of its homological realizations land in the images of the embeddings of \ref{eq.realizations}. Let $V(A_T^{KS})$ to be the space of special endomorphisms.
\end{defn}

\begin{rem}
    By Proposition 4.3.4 of \emph{loc. cit.} we may check if an endomorphism is special at a geometric point. More precisely, an endomorphism $v \in \End_{C(L)}(A^{KS}_T)$ is special if and only if there exists a geometric point $t \in T$ such that $v_t \in \End_{C(L)}(A_{t}^{KS})$ is special. 
\end{rem}

\begin{lemma}[{\cite[Lemma 5.12]{pera2016integral}}]
    The quadratic form $Q$ on $V(A_T)$ given by $f \circ f = Q(f) \text{Id}$ is positive definite. 
\end{lemma}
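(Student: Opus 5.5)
The statement to prove is the cited lemma \cite[Lemma 5.12]{pera2016integral}: the form $Q$ on $V(A_T)$, defined by $f\circ f = Q(f)\,\mathrm{Id}$, is positive definite.

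\textbf{Reduction to a geometric point.} By the preceding remark, an endomorphism is special if and only if it is special at one geometric point. Since $Q$ is compatible with pullback, it suffices to treat $T=\Spec k$ with $k$ algebraically closed. There we must show $Q(f)>0$ for every nonzero special endomorphism $f$ of $A=A^{KS}_t$.

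\textbf{Positivity via the Rosati involution.} The Kuga--Satake abelian variety carries a polarization coming from the symplectic form on $H$. I will show that the Rosati involution $\dagger$ acts on realizations of special endomorphisms by $f^\dagger = f$. On $\mathbb{L}_?$, the adjoint of left multiplication by $v\in L$ with respect to the symplectic form on $C(L)$ is left multiplication by $\pm v$. The sign is fixed by the choice of symplectic form (built from an element $\delta\in C^+$ with $\delta^* = -\delta$) and is compatible across realizations. Granting $f^\dagger=f$, we get
\[
\tr(f\circ f^\dagger)=\tr(f\circ f)=Q(f)\cdot \dim H^1(A)=Q(f)\cdot 2\dim A .
\]
Positivity of the Rosati involution on $\End(A)\otimes\mathbb{Q}$ (the trace of $ff^\dagger$ is positive for $f\neq 0$) then gives $Q(f)>0$. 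The trace may be computed in any realization, for instance $\ell$-adic, so the argument is independent of the characteristic.

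\textbf{Alternative route via lifting.} In characteristic $0$, one could instead reduce to $\mathbb{C}$ and use that a special endomorphism lies in $L_{\mathbb{Q}}\cap L^{0,0}$. That space is orthogonal to the negative definite plane spanned by $\mathrm{Re}\,z$ and $\mathrm{Im}\,z$, and is therefore positive definite because $L$ has signature $(b,2)$. In characteristic $p$, however, lifting special endomorphisms is not always possible. This is why the Rosati argument is preferable.

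\textbf{Main obstacle.} The delicate step is the sign: verifying that special endomorphisms are Rosati-symmetric rather than antisymmetric. I would settle it by computing over $\mathbb{C}$ at one point, where the sign can be read off from the classical positivity above. Since the relation $f^\dagger=\pm f$ is a polynomial identity on the local system $\mathbb{L}$, the same sign then propagates to all points and all realizations.
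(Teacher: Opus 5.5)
Your proposal is correct and is essentially the argument of the cited source \cite[Lemma 5.12]{pera2016integral}; the paper itself only quotes that result. The argument runs: reduce to a geometric point, show that special endomorphisms are fixed by the Rosati involution of the Kuga--Satake polarization (checked on the $\ell$-adic realization), and conclude from $\tr(f\circ f^\dagger)=Q(f)\cdot 2\dim A>0$.

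One simplification: you do not need a complex point to fix the sign. For the Kuga--Satake form $\psi_\delta(x,y)=\mathrm{trd}(x\delta y^*)$, with $*$ the canonical involution fixing $L$ pointwise, cyclicity of the trace gives $\psi_\delta(vx,y)=\mathrm{trd}(x\delta y^* v)=\psi_\delta(x,vy)$ directly, whatever $\delta$ is.
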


\begin{defn}
    For $m \in \mathbb{Z}_{>0}$ the \emph{special divisor} $\mathcal{Z}(m)$ is the Deligne-Mumford stack over $\mathscr{S}$ with functor of points 
    \[\mathcal{Z}(m)(T) = \{v \in V(A_T^{KS}) | Q(v) = m \} \]
    for any $\mathscr{S}$ scheme $T$. By an abuse of notation, let $\mathcal{Z}(m)$ also denote the image of $\mathcal{Z}(m)$ in $\mathscr{S}$. Then by \cite[Proposition 4.5.8]{andreatta2018faltings}, $\mathcal{Z}(m)$ is an effective Cartier divisor and is flat over $\mathbb{Z}_{(p)}$. Thus, $Z(m) = \mathcal{Z}(m)_{\mathbb{F}_p}$ is still an effective Cartier divisor. 
\end{defn}

Special endomorphisms correspond to line bundles orthogonal to an ample class on the associated K3 surface. More precisely, by \cite[Theorem 4.17(4)]{madapusi2015tate} for a geometric point $\bar{x} \rightarrow \mathscr{S}$ we have an isomorphism 
\[V(A_{\bar{x}}^{KS}) \cong \langle \xi_{\bar{x}} \rangle^{\perp} \subseteq \Pic(X_{\bar{x}}) \]
where $\xi_{\bar{x}}$ is the specialization of a polarization class to $\bar{x}$. In particular, this isomorphism is compatible with the quadratic forms and all cohomological realizations. Restricting to the associated transcendental local system identifies special endomorphisms with new line bundles of $X_{\bar{x}}$. Thus, we use $Q$ to also denote the quadratic form induced by the cup product. 

\subsection{Mumford-Tate conjecture for K3 surfaces over a global function field}\label{subsec.MTff}

Now we summarize an analogue of the Mumford-Tate conjecture over a global function field by Jiang in \cite{jiang2023p}. 

Let $X$ be an ordinary polarized K3 surface over a global function field $K$. Let $C \rightarrow \mathbb{F}_q$ be the smooth projective curve with function field $K$. After replacing $C$ with a suitable nonempty open subscheme, there exists a smooth, projective, polarized family of K3 surfaces
\[\pi: \mathcal{X} \rightarrow U \]
such that the generic fiber $\mathcal{X}_{\eta} \cong X$. To apply Jiang's construction, we describe a map of this family into a GSpin Shimura variety using facts about the Kuga-Satake period map in \cite{madapusi2015tate}.  

\begin{proposition}\label{prop.periodmap}
    After replacing $U$ by a nonempty open subscheme and then by a finite étale cover $U'$, there exist an even quadratic lattice $\widetilde{L}$, self-dual at $p$, and a morphism    
    \[f: U' \xrightarrow{\iota^{KS}} \mathscr{S}(L_{d})_{\mathbb{F}_q}^{ord} \rightarrow \mathscr{S}(\tilde{L})_{\mathbb{F}_q}^{ord} \]
    compatible with cohomological realizations. 
\end{proposition}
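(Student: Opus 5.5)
The plan is to follow Madapusi Pera's construction of the Kuga--Satake period map in \cite{madapusi2015tate}, in characteristic $p \geq 3$. First I will spread out and rigidify the family. Let $\xi$ be the relative polarization on $\pi:\mathcal{X}\to U$, of degree $2d$; after shrinking $U$ we may assume $\xi$ is primitive on every fiber and that $p \nmid d$. If $p \mid d$, I would instead pass to a suitable auxiliary primitive ample class; this is permitted because $\widetilde{L}$ need only be self-dual at $p$. Set $L_d = \langle e-df\rangle^{\perp}$ inside the K3 lattice $U^{\oplus 3}\oplus E_8^{\oplus 2}$, the primitive cohomology lattice of signature $(19,2)$. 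Madapusi Pera shows that the moduli stack $\mathsf{M}_{2d}$ of degree $2d$ polarized K3 surfaces, with spin level structure $\mathbb{K}$, admits an étale period map $\iota^{KS}:\widetilde{\mathsf{M}}_{2d,\mathbb{K}}\to\mathscr{S}(L_d)$ over $\mathbb{Z}_{(p)}$. He also shows that the primitive cohomology of the universal K3 is identified with $\mathbb{L}$ in every realization ($\ell$-adic, de Rham, crystalline), compatibly with the quadratic forms; see \cite[Theorem 4.17]{madapusi2015tate}.

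Second, I will lift $U$ to this level-structure cover. The classifying map $U\to\mathsf{M}_{2d}$ pulls back the torsor $\widetilde{\mathsf{M}}_{2d,\mathbb{K}}\to\mathsf{M}_{2d}$, which is finite étale. This torsor parametrizes orientations of $\det$ together with $\mathbb{K}$-level structure on $\widehat{\mathbb{Z}}^p$-cohomology. Let $U'$ be a connected component of this pullback; then $U'\to U$ is finite étale, and composing gives $U'\to\mathscr{S}(L_d)_{\mathbb{F}_q}$. Ordinarity of $X$ is an open condition, since the Hasse invariant is nonvanishing on an open set. After shrinking $U$ once more, the image therefore lands in the ordinary locus, which on the Shimura side corresponds to the Kuga--Satake abelian variety being ordinary.

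Third, $L_d$ is typically not self-dual at $p$, and Jiang's results are stated for self-dual lattices, so I will embed it. I choose an even lattice $\widetilde{L}\supset L_d$ such that $\widetilde{L}$ is self-dual at $p$ and $L_d$ is a direct summand of $\widetilde{L}$ with positive definite complement. Since $p\nmid d$, one can take $\widetilde{L}=L_d\oplus\langle 2d\rangle$ after adjusting the orthogonal complement. This gives an embedding $\GSpin(L_d)\to\GSpin(\widetilde{L})$ and, by functoriality of canonical integral models (Kisin, Madapusi Pera), a finite unramified morphism $\mathscr{S}(L_d)\to\mathscr{S}(\widetilde{L})$. Under this morphism $\mathbb{L}_d\hookrightarrow\widetilde{\mathbb{L}}$ is compatible with realizations, and the Kuga--Satake abelian schemes are related by isogeny/tensor, which preserves ordinarity. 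Composing the two maps gives $f$.

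The main obstacle is keeping track of the compatibility of the crystalline realizations and of ordinarity through the second embedding at level $p$, which is where self-duality at $p$ is needed. I would handle this by citing the functoriality results for integral models of Hodge type in \cite{kisin2010integral} and \cite{pera2016integral}, and not reprove them.
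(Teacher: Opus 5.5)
Your overall route is the paper's: a classifying map to $M_{2d}$, a finite étale level-structure cover, Madapusi Pera's period map $\iota^{KS}$, restriction to the ordinary locus, and a map to a GSpin Shimura variety for a lattice that is self-dual at $p$. The gap is in how you handle the prime $p$, and it sits exactly where the proposition has content.

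Shrinking $U$ does not change $d$. Your fallback of replacing $\xi$ by another primitive ample class of degree prime to $p$ is not always available. For example, if $\NS(\overline{X})=\mathbb{Z}\xi$ with $\xi^2=2d$ and $p\mid d$, then $\xi$ is the only primitive ample class.

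Moreover, under your assumption $p\nmid d$ your third step is empty. $L_d$ is the orthogonal complement of a primitive vector of norm $2d$ in the unimodular K3 lattice, so $|\mathrm{disc}(L_d)|=2d$. This is a unit in $\mathbb{Z}_{(p)}$ for odd $p$, so $L_d$ is already self-dual at $p$ and you may take $\widetilde{L}=L_d$. Conversely, your $\widetilde{L}=L_d\oplus\langle 2d\rangle$ has discriminant $\pm 4d^2$, so it is self-dual at $p$ precisely when $L_d$ is. More generally, no lattice containing $L_d$ as an orthogonal direct summand can be self-dual at $p$ unless $L_d$ is. Your argument therefore only covers the case where no enlargement is needed, and says nothing when $p\mid d$, which is the case the choice of $\widetilde{L}$ exists for.

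The missing input is that Madapusi Pera's construction works for every $d$ once $p$ is odd, so there is no reason to avoid $p\mid d$. When $L_d$ is not self-dual at $p$, the level at $p$ is not hyperspecial. So $\mathscr{S}(L_d)$ is not a Kisin canonical model, and \emph{functoriality of canonical models} is not what produces the map to $\mathscr{S}(\widetilde{L})$.

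Instead, embed $L_d$ into a lattice $\widetilde{L}$ that is self-dual at $p$ and has positive definite orthogonal complement, using \cite[Lemma 6.8]{pera2016integral]}. The model $\mathscr{S}(L_d)$ is then constructed in \cite{pera2016integral} as the normalization of the closure of $\mathcal{S}(L_d)$ inside $\mathscr{S}(\widetilde{L})$. Three consequences follow:
\begin{enumerate}
\item the morphism $\mathscr{S}(L_d)\to\mathscr{S}(\widetilde{L})$, after adjusting the level, is built into the construction;
\item the period map of \cite[Proposition 4.7]{madapusi2015tate} lands in this model for arbitrary $d$;
\item compatibility of the $\ell$-adic, de Rham and crystalline realizations is \cite[Propositions 6.5 and 7.15]{pera2016integral}.
\end{enumerate}
This is how the paper argues. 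If you drop the assumption $p\nmid d$ and the auxiliary-class detour and make this replacement, your remaining steps go through as written. These are the openness of the ordinary locus and the preservation of ordinarity along the second map.
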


\begin{proof}
Replacing $U$ with a finite étale cover where $\mathcal{X}$ has a primitive polarization if necessary, this family determines a classifying morphism to the moduli stack $M_{2d}$ of primitively polarized K3 surfaces of degree $2d$ for some $d > 0$ defined over $\mathbb{Z}[1/2]$. Again possibly taking a finite etale cover of $U$ for the appropriate level structure, this classifying morphism extends to the cover $\tilde{M}_{2d}$ of $M_{2d}$. Then by Proposition 4.7 \emph{loc. cit.}, there exists a morphism 
\[\iota^{KS}: U' \rightarrow \mathscr{S}(L_d)_{\mathbb{F}_q}. \]
Here $L_d$ is the primitive lattice orthogonal to the class of the primitive polariazation. Choose a self-dual enlargement of $L_d \subset \tilde{L}$ as in \cite[Lemma 6.8]{pera2016integral}. After choosing appropriate level structure, there exists a morphism of integral models 
\[\mathscr{S}(L_d) \rightarrow \mathscr{S}(\tilde{L}) \]
After composing morphisms, we obtain the morphism in the statement. By \cite[Propositions 6.5 and 7.15]{pera2016integral}, we may deduce that the morphisms are compatible with cohomological realizations. 
\end{proof}

We first make some establish some notations and definitions. As above, let $V_{\ell}(1)$ denote the transcendental part of $H^2_{\text{ét}}(X, \mathbb{Q}_{\ell}(1))$, defined to be the orthogonal complement under cup product of the rational Neron-Severi group. Similar to the number field case, this is invariant under the Galois action.

\begin{defn}
    The transcendental $\ell$-adic monodromy group $G_{\ell}$ of $X$ is the Zariski closure of the image of 
    \[\rho_{\ell}: \pi_1^{\text{ét}}(U,\bar{x}) \rightarrow \Or(V_{\ell}(1),\psi)\]
\end{defn}

\begin{rem}
    At a geometric generic point $\bar{\eta} = \Spec \bar{K}$ of $U$ we have a similar representation of $\Gal(\bar{K}/K)$, which actually factors through $\rho_{\ell}$. We will refer to these representations interchangeably.
\end{rem}

For the same reason as in the number field case, there is a unique smallest extension of $U$ such that the monodromy group becomes connected. Let $K^{conn}$ refer to the corresponding extension of $K$. 

Now, compose $f$ with the Kuga-Satake embedding and by an abuse of notation, call this resulting map $f$
\[f: U' \rightarrow \mathscr{S}(\tilde{L})_{\mathbb{F}_q}^{ord} \rightarrow \mathscr{A}_{g, \mathbb{F}_q}.\]
In particular, $\mathscr{S}(\tilde{L})$ is Jiang's so-called naive integral model. 

\begin{defn}
    Define $\MT(f)$ to be the generic Mumford-Tate group of a minimal special subvariety of $\mathscr{A}_{g,\bar{\mathbb{Q}}}$ whose Zariski closure in $\mathscr{A}_{g, \bar{\mathbb{Z}}}$ contains the image of $f$. Let $\mathrm{Hg}(f)$ be the associated Hodge group, fitting into the exact sequence 
    \[1 \rightarrow \mathbb{G}_m \rightarrow \MT(f) \rightarrow \mathrm{Hg}(f) \rightarrow 1. \]
\end{defn}

\begin{rem}
    More concretely, we may define $\MT(f)$ to be the Mumford-Tate group of the canonical lift of a geometric generic point of $U'$. This definition is equivalent by \cite[Lemma 4.3]{jiang2023p} and the fact that the Mumford-Tate group for a Hodge-structure is determined by its endomorphisms. This characterization is what makes our definition of $F$ a valid one in Theorem \ref{thm.main}.  
\end{rem}

\begin{theorem}[{\cite[Theorem 1.3]{jiang2023p}}]\label{thm.FFMT}
    Let $X/K$, $\mathcal{X} \rightarrow U'$, and $f$ be as above. Then
    \[\mathrm{Hg}(f)_{\mathbb{Q}_{\ell}} \cong G^{\circ}_{\ell}. \]
    In particular, the $G_{\ell}^{\circ}$ is of the same form as \ref{eq.neutralcomp}. 
\end{theorem}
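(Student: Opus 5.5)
The statement is Jiang's Theorem 1.3. I would prove it by reduction to that theorem, with the real content being the transfer from $\pi_1^{\text{ét}}(U')$ to the transcendental part and the identification of $\mathrm{Hg}(f)$ with the centralizer of $F$. I would carry out the steps in the following order.

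\textbf{Step 1: reduce to the cover $U'$.} The neutral component $G_\ell^\circ$ does not change under finite étale covers: $\pi_1^{\text{ét}}(U')$ is open of finite index in $\pi_1^{\text{ét}}(U)$, so the Zariski closure of its image contains $G_\ell^\circ$. Hence we may work with the family over $U'$ and the period map $f$ of Proposition \ref{prop.periodmap}.

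\textbf{Step 2: pass from the Kuga--Satake side to $V_\ell(1)$.} By Proposition \ref{prop.periodmap} the realizations are compatible. So the $\ell$-adic local system $\mathbb{L}_{\ell,\text{ét}}$ pulled back along $f$ is identified with the primitive part of $H^2_{\text{ét}}(\mathcal{X}_{\bar\eta},\mathbb{Z}_\ell(1))$. Under this identification, $\NS(\overline{X})_{\mathbb{Q}_\ell}\cap\langle\xi\rangle^\perp$ corresponds to special endomorphisms of $A^{KS}_{\bar\eta}$. Jiang's result gives the connected monodromy of $\mathbb{H}_{\ell}$, which is the $\ell$-adic realization of $\MT(f)$, or of $\mathrm{Hg}(f)$ after removing the scalars $\mathbb{G}_m$. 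Pushing forward along $\GSpin\to\SO$ then identifies $G_\ell^\circ$ for $V_\ell(1)\oplus(\NS\text{ part})$ with the image of $\mathrm{Hg}(f)_{\mathbb{Q}_\ell}$.

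\textbf{Step 3: discard the Néron--Severi part.} After a finite extension, Galois acts trivially on the Néron--Severi part. On the Hodge side, the canonical lift's Néron--Severi classes are Hodge classes. So both groups act trivially there, as in the proof of Theorem \ref{thm.MT}, and we may restrict to $V_\ell(1)$.

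\textbf{Step 4: deduce the shape \eqref{eq.neutralcomp}.} Apply Proposition \ref{prop.centralizer} to the transcendental Hodge structure $V$ of the canonical lift. Its Mumford--Tate group is the centralizer of $F$ in $\mathrm{GO}(V,\psi)$, and after the Tate twist the Hodge group is the centralizer in $\SO$. The Hermitian or symmetric form $\phi$ then identifies this centralizer with $\Res\SO(V_F,\phi)$ or $\Res U(V_F,\phi)$, exactly as in the number field case.

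\textbf{Main obstacle.} The delicate point is the identification in Step 2 and Step 4 of $\mathrm{Hg}(f)$, defined via the minimal special subvariety, with the Hodge group of the canonical lift of the geometric generic point, and its compatibility with the Kuga--Satake isogeny $\GSpin\to\SO$. For this I would invoke \cite[Lemma 4.3]{jiang2023p} together with the remark that the Mumford--Tate group is determined by endomorphisms, and check that the image under the isogeny of a connected group is the corresponding connected group.
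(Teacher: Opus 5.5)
Your proposal is correct and takes essentially the same route as the paper. The paper's proof simply invokes Jiang's theorem, passes to $V_\ell(1)$ via the adjoint action $\GSpin \to \SO$, and obtains the shape \eqref{eq.neutralcomp} from Zarhin's centralizer description together with the invariance of $G_\ell^\circ$ under finite \'etale covers; your Steps 1--4 spell out what that terse argument leaves implicit, namely discarding the trivially acted-upon N\'eron--Severi part and identifying $\mathrm{Hg}(f)$ with the Hodge group of the canonical lift via \cite[Lemma 4.3]{jiang2023p}.
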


\begin{proof}
    We apply Jiang's theorem and apply the adjoint action to the relevant representations. The second claim is by Zarhin's characterization in \cite{zarhin1983hodge} and the fact that the neutral component is insensitive to finite étale extensions of the base. 
\end{proof}

\section{Density of Picard Rank Jumps}

In this section, we prove Theorem \ref{thm.main} of our paper. The following result will serve as our main framework in our density computations. For $g \in G_{\ell}(\overline{\mathbb{Q}_{\ell}})$, define  
\begin{equation}\label{eq.mult}
m(g) := \sum_{\zeta \in \mu_{\infty}} \mathrm{mult}_{\zeta}(\det((T-g)|V_{\ell}))
\end{equation}
be the function that counts the eigenvalues of $g$ equal to roots of unity with multiplicity. 
\begin{proposition}\label{prop.Chebotarev}
    For any integer $c \geq 0$, the density of primes $p$ of $K$ such that $\rho(\overline{X}_p) = \rho(\overline{X}) + c$ is equal to 
    \[\frac{\# \{\gamma \in \pi_0(G_{\ell}) : \min\{m(g) : g \in \gamma G_{\ell}^{\circ}(\overline{\mathbb{Q}_{\ell}})  \} = c \} }{\# (\pi_0(G_{\ell}))} \]
\end{proposition}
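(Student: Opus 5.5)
The plan is to reduce the jump in Picard rank at a good place $p$ to the eigenvalues of $\rho_\ell(\mathrm{Frob}_p)$ on $V_\ell(1)$, and then count Frobenius elements coset by coset of $G_\ell^\circ$ using an $\ell$-adic Chebotarev theorem.

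\textbf{Step 1: Picard jump equals $m(\mathrm{Frob}_p)$.} Fix a good place $p$ of $K$ not above $\ell$, and write $\mathrm{Frob}_p$ for a Frobenius element. By smooth and proper base change,
\[
H^2_{\text{ét}}(\overline{X},\mathbb{Q}_\ell(1)) \cong H^2_{\text{ét}}(\overline{X_p},\mathbb{Q}_\ell(1))
\]
Frobenius-equivariantly. The Tate conjecture for K3 surfaces over finite fields is available, and Frobenius acts semisimply on $H^2$ of a K3 surface (via Kuga--Satake and Tate's theorem for abelian varieties). Together these give that $\rho(\overline{X_p})$ equals the number of eigenvalues of $\mathrm{Frob}_p$ on $H^2(1)$ that are roots of unity, counted with multiplicity. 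On $\NS(\overline{X})_{\mathbb{Q}_\ell}$ the Galois action factors through a finite group, so all $\rho(\overline{X})$ eigenvalues there are roots of unity. This gives
\[
\rho(\overline{X_p})-\rho(\overline{X}) = m(\rho_\ell(\mathrm{Frob}_p)).
\]
In the function field case the same argument applies fibrewise over $U'$.

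\textbf{Step 2: algebraicity of the condition on each coset.} The characteristic polynomial of $\rho_\ell(\mathrm{Frob}_p)$ on $V_\ell(1)$ has rational coefficients and degree $n=\dim V$. Hence any root of unity among its roots has order $N$ with $\varphi(N)\le n$, so only finitely many roots of unity $\zeta_1,\dots,\zeta_r$ can occur. Let $\Phi=\prod_{\varphi(N)\le n}\Phi_N$. For $g$ in the Zariski-closed set $\mathcal{R}\subset G_\ell$ of elements whose characteristic polynomial has rational coefficients, $m(g)$ is the number of roots of $\det(T-g)$ among the $\zeta_i$, counted with multiplicity.

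I would then show that $\{g: m(g)\ge k\}$ is Zariski closed in $G_\ell$. This follows because $m(g)\ge k$ is equivalent to the vanishing of finitely many polynomial conditions in the coefficients of $\det(T-g)$: for instance, vanishing of the appropriate resultant/subresultant conditions for $\gcd(\det(T-g),\Phi^n)$ to have degree $\ge k$.

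Fix a coset $\gamma G_\ell^\circ$. It is irreducible, being a translate of a connected group, so $m$ attains its minimum $c_\gamma$ on a nonempty Zariski-open dense subset. Its complement $Z_\gamma=\{m\ge c_\gamma+1\}\cap\gamma G_\ell^\circ$ is a proper closed subvariety.

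\textbf{Step 3: Chebotarev on the image.} Let $\Gamma=\rho_\ell(\Gal_K)$, a compact $\ell$-adic Lie group that is Zariski dense in $G_\ell$. The map $\Gamma\to\pi_0(G_\ell)$ is surjective, so each coset $\Gamma\cap\gamma G_\ell^\circ$ has normalized Haar measure $1/\#\pi_0$. By Serre's Chebotarev density theorem for $\ell$-adic representations (or, over a function field, Chebotarev for the étale cover of $U$ cut out by $\rho_\ell$), the Frobenius elements are equidistributed in $\Gamma$ for Haar measure.

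It therefore suffices to show that $\Gamma\cap Z_\gamma$ has Haar measure zero. Granting this, the density of places with $\mathrm{Frob}_p\in\gamma G_\ell^\circ$ and jump exactly $c_\gamma$ is $1/\#\pi_0$. Summing over the $\gamma$ with $c_\gamma=c$ gives the stated formula.

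\textbf{Main obstacle.} The measure-zero claim is where I expect the difficulty. The argument I would use runs as follows. The open subgroup $\Gamma^\circ=\Gamma\cap G_\ell^\circ(\mathbb{Q}_\ell)$ is Zariski dense in the connected group $G_\ell^\circ$. Hence $\Gamma\cap\gamma G_\ell^\circ$ is a coset $g_0\Gamma^\circ$, and it is Zariski dense in $\gamma G_\ell^\circ$. A proper Zariski-closed subset of an irreducible variety meets a Zariski-dense compact $\ell$-adic analytic submanifold in an analytic subset of strictly smaller dimension. Such a subset has Haar measure zero (Serre's lemma that proper analytic subsets are null).

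The delicate point is that $\Gamma^\circ$ need not be open in $G_\ell^\circ(\mathbb{Q}_\ell)$. I would first reduce to Bogomolov's theorem, applied to the Kuga--Satake abelian variety, to get that $\Gamma^\circ$ is open in $G_\ell^\circ(\mathbb{Q}_\ell)$. Failing that, I would use that the $\mathbb{Q}_\ell$-Lie algebra of $\Gamma^\circ$ is algebraic with Zariski closure $\mathrm{Lie}\,G_\ell^\circ$, so that $g_0\Gamma^\circ$ is an open piece of the $\mathbb{Q}_\ell$-points of the coset.
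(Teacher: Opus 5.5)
Your Steps 1--3 follow the paper's argument. The Tate conjecture (with semisimplicity, or equivalently nondegeneracy of the pairing) turns the jump into $m(\rho_\ell(\mathrm{Frob}_p))$. The locus $\{m\ge k\}$ is Zariski closed because only roots of unity of degree at most $\dim V$ can occur. Chebotarev with Haar measure on $\Gamma$ then reduces everything to showing that $\Gamma\cap Z_\gamma$ is Haar-null. Two small points. First, your set $\mathcal{R}$ is not Zariski closed, but you never use it, since the $\gcd(\det(T-g),\Phi^n)$ condition is closed on its own. Second, to match your truncated minimum $c_\gamma$ with the minimum of $m$ in the statement: $m\ge m_\Phi$ everywhere, and a Frobenius in the nonempty open set $(\Gamma\cap\gamma G_\ell^\circ)\setminus Z_\gamma$ exists by density. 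Its characteristic polynomial is rational, so it has $m=m_\Phi=c_\gamma$.

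The problem is your ``main obstacle''. You do not need $\Gamma^\circ$ to be open in $G_\ell^\circ(\mathbb{Q}_\ell)$, and neither route you propose for it covers the function-field half of the statement. Bogomolov's theorem is a number-field result. Over a finite-field base, openness fails in general:
\begin{itemize}
\item By Deligne, the geometric monodromy has semisimple neutral component, so up to finite index it lies in $G_\ell^{\circ,\mathrm{der}}$.
\item $\pi_1(U)$ adds only a procyclic quotient, so $\Gamma$ has $\ell$-adic dimension at most $\dim G_\ell^{\circ,\mathrm{der}}+1$.
\item When $F$ is CM with $d=[F^+:\mathbb{Q}]\ge 2$, the center of $G_\ell^\circ=\Res_{F^+/\mathbb{Q}}U(V_F,\phi)$ is a $d$-dimensional torus. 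So $\Gamma^\circ$ is not open and its Lie algebra is not algebraic.
\end{itemize}

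What you need is the argument behind Serre's Proposition 5.12, which the paper cites and which uses only Zariski density. Suppose $\Gamma\cap Z_\gamma$ had positive Haar measure. Being a closed analytic subset of the $\ell$-adic manifold $\Gamma$, it would then have nonempty interior, so it would contain a coset $h\Gamma'$ of an open subgroup $\Gamma'\subseteq\Gamma^\circ$. Since $\Gamma'$ has finite index in $\Gamma^\circ$, its Zariski closure has finite index in the connected group $G_\ell^\circ$ and hence equals it. This forces $Z_\gamma\supseteq\gamma G_\ell^\circ$, a contradiction.

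The group structure is essential here. Your dimension claim is false for an arbitrary Zariski-dense compact $\ell$-adic submanifold: such a manifold is totally disconnected, and one open piece of it can lie inside a proper subvariety.
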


\begin{proof}
    By \cite{madapusi2015tate} and \cite{kim20162}, $X_p$ satisfies the Tate conjecture. In particular, this combined with the nondegeneracy of the pairing on Neron-Severi implies that Frobenius invariant subspaces of our transcendental lattice $V$ correspond to elements in $\NS(\overline{X}_p) \setminus \NS(\overline{X})$. 
    
    Now, let $\mathcal{G}_{\ell} := \overline{\mathrm{im}(\rho_{\ell})}^{Zar} \subseteq O(V, \psi)_{\mathbb{Z}_{\ell}}$, which we call the $\mathbb{Z}_{\ell}$-monodromy group. Let $\Gamma$ be the image of $\rho_{\ell}$ in the $\ell$-adic Lie group $\mathcal{G}_{\ell}(\mathbb{Z}_{\ell})$. As a closed subgroup of a compact set, we may equip $\Gamma$ with the Haar measure of total mass 1. Note that $\Gamma$ is Zariski dense in $\mathcal{G}_{\ell}$ by definition. 

    Given an element $\gamma$ of $\Gal(K^{conn}/K)$ in the number field case and of $\pi_1(U)/\pi_1(U_{K^{conn}})$ in the function field case, let $\Gamma^{(\gamma)}$ be the image of $\gamma \Gal(\bar{K}/K^{conn})$ and $\gamma \pi_1(U_{K^{conn}})$ in $\mathcal{G}_{\ell}(\mathbb{Z}_{\ell})$ respectively. Again we note that by definition, $\Gamma^{(\gamma)}$ is Zariski dense in the corresponding component $\mathscr{G}_{\ell}^{(\gamma)}$ of $\mathscr{G}_{\ell}$.

    For any integer $k \geq 0$, let $Z_k$ be the set of elements in $\mathcal{G}_\ell$ whose characteristic polynomial acting on $V$ has at least $k$ roots of unity as roots (counted with multiplicity). Because the characteristic polynomial has a fixed degree and there are only finitely many roots of unity of a given degree, each $Z_k$ is a conjugacy-invariant, closed subscheme of $\mathcal{G}_\ell$. 
    
    Since $Z_k \cap \Gamma$ is a closed analytic subset of $\Gamma$, its boundary measure is 0 by \cite[Proposition 5.9]{serre2016lectures}. Hence, Chebotarev density theorem (specifically, \cite[Theorem 9.13A]{rosen2013number} in the function field case) implies that the density of the set of primes for which $\rho_{\ell}(\mathrm{Frob}_p)$ is in $Z_c$ is equal to the Haar measure of $Z_c \cap \Gamma$ \cite[Corollary 6.10]{serre2016lectures}.
    
    Now, by \cite[5.2.1.2 and Proposition 5.12]{serre2016lectures}, the Haar measure of $Z_c \cap \Gamma$ is exactly the number of components of $G_{\ell}$ contained in $Z_c$ divided by the total number of connected components. Note that if the minimum number of roots of unity on a given component is at least $c$, then said component is by definition contained in $Z_c$. On the other hand, if the minimum number of roots of unity on this component is strictly less than $c$, then $Z_{c} \cap \mathcal{G}_\ell^{(\gamma)}$ is a proper closed subscheme of $\mathcal{G}_\ell^{(\gamma)}$. Since $Z_{c+1} \subseteq Z_{c}$, the density of primes yielding exactly $c$ roots of unity is the difference of these densities, which by additivity of the Haar measure equals the measure of $(Z_{c} \setminus Z_{c+1}) \cap \Gamma$. Thus, we deduce the density of primes $p$ such that $\rho(\overline{X}_p) = \rho(\overline{X}) +c $ is equal to the proportion of connected components of $\mathcal{G}_\ell$ on which the minimum number of eigenvalues equal to a root of unity (counted with multiplicity) for any element is exactly $c$.
\end{proof}

\subsection{Describing the component group}

In light of Proposition \ref{prop.Chebotarev}, our next goal is a group theoretic description of the component group $\pi_0 := G_{\ell}/G_{\ell}^{\circ}$ so that we may better understand the elements of each component. 

Let $V_{\ell} = V \otimes \mathbb{Q}_{\ell}$ and $\psi_{\ell} = \psi \otimes \mathbb{Q}_{\ell}$. From here on, we take $\ell$ to be totally split in $F/\mathbb{Q}$. Under this assumption we have that there is a decomposition of $V_{\ell}$ into $F^+$ eigenspaces
    \[V_{\ell} = \bigoplus_{\tau \in \Hom(F^+, \mathbb{Q}_{\ell})} V_{\ell}^{\tau} \]
so that $\psi_{\ell}$ decomposes into a sum of symmetric bilinear forms $\psi_{\ell}^{\tau}$ on each subspace $V_{\ell}$. In the totally real case, let the basis of $V_{\ell}^{\tau}$ be 
\begin{equation}\label{eq.TRbases}
    B_{\tau} = \{v_{\tau,1}, \dots, v_{\tau, n} \}.
\end{equation}
Without loss of generality, we may assume that we have chosen our basis and so that $\psi$ is the identity. Indeed, after picking a diagonal basis we may further choose $\ell$ to split completely in a finite Galois extension of $\mathbb{Q}$ containing the square roots of the diagonal coefficients. We may then choose bases $B_{\tau}$ such that the every $\psi_{\ell}^{\tau}$ has Gram matrix $I$.

For the CM case, given an embedding $\tau_i \in \Hom(F^+, \mathbb{Q}_{\ell})$, let $\tau_{i,1}, \tau_{i,2}: F \hookrightarrow \mathbb{Q}_{\ell}$ be the conjugate embeddings extending $\tau_i$. Each $V_{\ell}^{\tau_i}$ can be further decomposed into $V_{\ell}^{\tau_{i,1}}$ and $V_{\ell}^{\tau_{i,2}}$ with bases 
\begin{equation}\label{eq.CMbases}
    B_{\tau_{i,1}} = \{v_{\tau_i,1}, v_{\tau_i,2}, \dots v_{\tau_i,n} \} \qquad B_{\tau_{i,2}} = \{w_{\tau_2,1}, w_{\tau_2,2}, \dots, w_{\tau_2,n} \}
\end{equation}
such that $\psi_{\ell}^{\tau_i}$ is of the form
\[\begin{bmatrix}
    0 & I \\
    I & 0
\end{bmatrix} \]
To see why, let $\phi_i$ be the unique Hermitian form on $V_{\ell}^{\tau_i}$ viewed as a $F \otimes \mathbb{Q}_{\ell}$-module. With a suitable choice of $\ell$, we may write $\phi_i$ as the identity matrix under some basis. Let $B_{\tau_{i,j}}$ be the projections of this basis to $V_{\ell}^{\tau_{i,j}}$. Finally, let $A_{\tau} = B_{\tau_1} \cup B_{\tau_2}$ be the corresponding basis of $V_{\ell}^{\tau_i}$. 

Since $G^{\circ}_{\ell}$ commutes with the $F \otimes \mathbb{Q}_{\ell}$ action, using the bases of \ref{eq.TRbases} and \ref{eq.CMbases} we may write the elements of ${G}^{\circ}_{\ell}$ in a specific block diagonal form. When $F$ is totally real, in order to preserve the pairing on each $V_{\ell}^{\tau_i}$, our elements must be of the form 
\begin{equation}\label{eq.TRblocks}
    g = \diag(M_{\tau_1}, \dots, M_{\tau_d})
\end{equation}
where $M_{\tau_i} \in \SO(V_{\ell}^{\tau},\psi_{\ell}^{\tau})$. In the CM case, they are of the form 
\begin{equation}\label{eq.CMblocks}
    g = \diag(M_{\tau_{1,1}}, M_{\tau_{1,2}}, \dots, M_{\tau_{d,1}}, M_{\tau_{d,2}}) 
\end{equation}
where $M_{\tau_{i,j}} \in \GL(V_{\ell}^{\tau_{i,j}})$ and $M_{{\tau_{i,2}}} = M_{\tau_{i,1}}^* := (M_{\tau_{i,1}}^T)^{-1}$ in order to preserve the pairings on the $F^+ \otimes \mathbb{Q}_{\ell}$ eigenspaces. To summarize, we have 

\begin{lemma}\label{lemma.blocks}
    There exist isomorphisms 
    \[{G}^{\circ}_{\ell} \cong \begin{cases} 
        \prod_{i=1}^d \SO_{n, \mathbb{Q}_{\ell}} & \text{ if } $F$ \text{ is totally real }\\
        \prod_{i=1}^d \GL_{n,\mathbb{Q}_{\ell}} & \text{ if } $F$ \text{ is CM }.
    \end{cases}\]
    In the totally real case, the isomorphism is the identity. In the CM case, the isomorphism is given by $(M_1, M_1^*, \dots, M_d, M_d^*) \mapsto (M_1, \dots, M_d)$.
\end{lemma}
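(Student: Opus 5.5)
We start from the explicit description \eqref{eq.neutralcomp}, valid in both the number field case (Theorem \ref{thm.MT}) and the function field case (Theorem \ref{thm.FFMT}). After base change to $\mathbb{Q}_\ell$ with $\ell$ totally split in $F$ (and in the auxiliary field containing the needed square roots), the algebra $F\otimes\mathbb{Q}_\ell$ becomes a product of copies of $\mathbb{Q}_\ell$, indexed by $\Hom(F^+,\mathbb{Q}_\ell)$ in the totally real case and by $\Hom(F,\mathbb{Q}_\ell)$ in the CM case. Accordingly, $G^\circ_\ell$ is the centralizer of $F\otimes\mathbb{Q}_\ell$ inside $\SO(V_\ell,\psi_\ell)$, by Proposition \ref{prop.centralizer} transported through the Mumford--Tate comparison. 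The plan is to compute this centralizer directly in the bases \eqref{eq.TRbases} and \eqref{eq.CMbases}.

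\textbf{Step 1: block diagonal shape.} Any element commuting with $F\otimes\mathbb{Q}_\ell$ preserves each eigenspace $V_\ell^{\tau}$, respectively $V_\ell^{\tau_{i,j}}$. It is therefore block diagonal, with one block $M_\tau$ or $M_{\tau_{i,j}}$ per eigenspace. Conversely, every block diagonal matrix commutes with $F\otimes\mathbb{Q}_\ell$, since $F$ acts on each block by a scalar.

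\textbf{Step 2: the form imposes the conditions.} The eigenspaces $V_\ell^{\tau}$ for distinct $\tau$ are $\psi_\ell$-orthogonal, by the compatibility $\psi(ax,y)=\psi(x,a'y)$ with $a'=a$ on $F^+$.

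In the totally real case, preserving $\psi_\ell$ therefore means each $M_\tau$ lies in $\Or(V^\tau_\ell,\psi^\tau_\ell)=\Or_n$, using the Gram matrix $I$. One also has to check that the determinant-one condition holds blockwise and not just globally. This follows from \eqref{eq.neutralcomp}: the group $\Res_{F/\mathbb{Q}}\SO(V_F,\phi)$ base-changes to $\prod_\tau \SO(V^\tau_\ell,\phi^\tau)$, and $\phi^\tau$ agrees with $\psi^\tau_\ell$ up to the trace normalization, which is harmless once $\ell$ splits. Alternatively, connectedness of $G^\circ_\ell$ forces each block into the neutral component $\SO_n$.

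In the CM case, $V^{\tau_{i,1}}_\ell$ and $V^{\tau_{i,2}}_\ell$ are each isotropic and paired with each other, because complex conjugation swaps $\tau_{i,1}$ and $\tau_{i,2}$. With the Gram matrix $\begin{bmatrix}0&I\\I&0\end{bmatrix}$, preservation reads $M_{\tau_{i,1}}^T M_{\tau_{i,2}}=I$, that is, $M_{\tau_{i,2}}=M_{\tau_{i,1}}^*$, while $M_{\tau_{i,1}}\in\GL_n$ is arbitrary. Such an element automatically has determinant $1$, since $\det M\det M^*=1$. This matches $\Res_{F^+/\mathbb{Q}}U(V_F,\phi)_{\mathbb{Q}_\ell}\cong\prod\GL_n$, the standard splitting of a unitary group over a field where the CM extension splits.

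\textbf{Step 3: conclude.} The map $(M_1,M_1^*,\dots,M_d,M_d^*)\mapsto(M_1,\dots,M_d)$ is a homomorphism, because $(AB)^*=A^*B^*$. Its inverse $M\mapsto(M,M^*)$ is algebraic, so the map is an isomorphism of algebraic groups.

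The main obstacle is bookkeeping rather than substance. One must confirm that the chosen bases realize $\psi_\ell$ exactly with Gram matrices $I$ and $\begin{bmatrix}0&I\\I&0\end{bmatrix}$; this is where the choice of $\ell$ splitting the relevant square roots and the Hermitian form $\phi_i$ enters. One must also confirm that the identification of $G^\circ_\ell$ with the centralizer in $\SO$ gives exactly these groups, rather than a subgroup defined by an extra determinant or similitude condition. The cleanest way to handle this is to invoke \eqref{eq.neutralcomp} directly and base-change the Weil restrictions.
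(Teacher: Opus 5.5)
Your proposal is correct and follows the same route as the paper. Commuting with $F\otimes\mathbb{Q}_\ell$ forces a block-diagonal shape in the chosen eigenbases, and preserving $\psi_\ell$ with the given Gram matrices then yields $\SO_n$-blocks in the totally real case and pairs $(M, M^*)$ in the CM case. Your extra care about the determinant holding blockwise is well placed, and it is a point the paper takes for granted. When $F$ is totally real and $d\ge 2$, the full centralizer of $F\otimes\mathbb{Q}_\ell$ in $\SO$ is disconnected, so your opening claim that $G^\circ_\ell$ equals that centralizer should read ``its identity component.'' That identity component is exactly what base-changing \ref{eq.neutralcomp} gives, which is why your primary route via \ref{eq.neutralcomp} is the right one.
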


A key observation in describing our component group is that because $G_{\ell}$ normalizes $G^{\circ}_{\ell}$, we have $G_{\ell} \subseteq N_O(G_{\ell}^{\circ})$, with the latter being the normalizer of $G_{\ell}^{\circ}$ in the full orthogonal group. Quotienting by $G_{\ell}^{\circ}$, we have the injection 
\begin{equation}\label{eq.normalizer}
    \pi_0(G_{\ell}) \hookrightarrow N_{\Or}(G_{\ell}^{\circ})/G_{\ell}^{\circ}.
\end{equation}
We now focus on computing $N_{\Or}(G_{\ell}^{\circ})/G_{\ell}^{\circ}$ in each case.

\begin{proposition}\label{prop.normalizer}
    The group $N_{\Or}(G_{\ell}^{\circ})/G_{\ell}^{\circ}$ is isomorphic to $(\mathbb{Z}/2\mathbb{Z})^d \rtimes S_d$. Under this isomorphism, an element $((\delta_1, \dots, \delta_d), \sigma) \in (\mathbb{Z}/2\mathbb{Z})^d \rtimes S_d$ has as a coset representative
    \[b_{\sigma} \, \diag(r^{\delta_1}, \dots, r^{\delta_d}) \]
    where $b_{\sigma}$ is the block permutation matrix corresponding to $\sigma$, acting on the set of $F^+ \otimes \mathbb{Q}_{\ell}$-eigenspaces of $V_{\ell}$ and $r$ may be described as follows. 
    \begin{enumerate}
        \item When $F$ is totally real, $r$ is a matrix in $\Or(V_{\ell}^{\tau_i}, \psi_{\ell}^{\tau_i})$ with determinant $-1$. 
        \item When $F$ is CM, $r$ acts by swapping pairs of $F \otimes \mathbb{Q}_{\ell}$-eigenspaces via the matrix 
        \[\begin{bmatrix}
            0 & I_n \\
            I_n & 0 
        \end{bmatrix} \in \Or(V_{\ell}^{\tau_i}, \psi_{\ell}^{\tau_i}) \]
    \end{enumerate}
\end{proposition}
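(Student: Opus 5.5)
The strategy is to study how an element $g \in N_{\Or}(G_{\ell}^{\circ})$ acts on the $F^+\otimes\mathbb{Q}_\ell$-eigenspace decomposition. We produce a homomorphism to $S_d$, then identify its kernel modulo $G_\ell^\circ$ with $(\mathbb{Z}/2\mathbb{Z})^d$, working with the block descriptions \eqref{eq.TRblocks}, \eqref{eq.CMblocks} and Lemma \ref{lemma.blocks}.

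\textbf{The map to $S_d$.} Conjugation by $g$ is an automorphism of $G_\ell^\circ$ and so preserves its center. We first compute the center.
\begin{itemize}
\item In the totally real case the center is $\prod_i \mu_2$ (or trivial when $n$ is odd). This may be too small to detect the eigenspaces, so instead we use the \emph{isotypic decomposition}. As a $G_\ell^\circ$-representation, $V_\ell = \bigoplus_\tau V_\ell^\tau$, where the $\tau$-th factor $\SO_n$ acts by its standard representation on $V_\ell^\tau$ and the other factors act trivially. These are pairwise non-isomorphic irreducibles; this needs $n\ge 3$, which holds by the remark on non-CM.
\item In the CM case, $V_\ell = \bigoplus_{i,j} V_\ell^{\tau_{i,j}}$, and the pieces are the standard representation and its dual of the $i$-th $\GL_n$. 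These are again pairwise non-isomorphic irreducibles, using $n \ge 2$ (and for $n=1$ the center $\mathbb{G}_m^d$ has distinct characters).
\end{itemize}
Since $g$ normalizes $G_\ell^\circ$, it carries each isotypic component to an isotypic component. Hence $g$ permutes the $V_\ell^{\tau}$ (respectively the $V_\ell^{\tau_{i,j}}$). Moreover, $g$ must carry the pair $\{V^{\tau_{i,1}},V^{\tau_{i,2}}\}$ to another such pair, because these pairs are exactly the irreducibles moved together by a single simple factor; equivalently, $g$ preserves the orthogonality structure $\psi_\ell^{\tau_i}$ pairing $V^{\tau_{i,1}}$ with $V^{\tau_{i,2}}$. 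This gives a homomorphism $N_{\Or}(G_\ell^\circ)\to S_d$. Every block permutation $b_\sigma$ lies in $\Or$, since the bases were chosen with identical Gram matrices on all blocks, and $b_\sigma$ visibly normalizes $G_\ell^\circ$. So the map is surjective and split by $\sigma\mapsto b_\sigma$, and it kills $G_\ell^\circ$.

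\textbf{The kernel.} Next, take $g$ in the kernel, so that $g$ preserves each $V_\ell^{\tau_i}$ and $g=\diag(g_1,\dots,g_d)$ with $g_i\in \Or(V_\ell^{\tau_i})$ normalizing the $i$-th factor.
\begin{itemize}
\item \emph{Totally real case.} We have $N_{\Or_n}(\SO_n)=\Or_n$, so $g_i \in \SO_n$ or $g_i\in r\,\SO_n$. This gives $(\mathbb{Z}/2)^d$.
\item \emph{CM case.} Either $g_i$ preserves both $V^{\tau_{i,1}}$ and $V^{\tau_{i,2}}$, or it swaps them. If it preserves both, then $g_i=\diag(A,B)$, and preserving $\begin{bmatrix}0&I\\I&0\end{bmatrix}$ forces $B=A^*$, so $g_i\in \GL_n$, the $i$-th factor. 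If it swaps them, then $r g_i$ preserves both. Since $r\in \Or$ and $r$ normalizes $\GL_n$ (it acts by $A\mapsto A^*$), we again get $\mathbb{Z}/2$ for each $i$.
\end{itemize}
The kernel modulo $G_\ell^\circ$ is therefore $(\mathbb{Z}/2)^d$, generated by the $\diag(r^{\delta_i})$, and $b_\sigma$ acts on it by permuting coordinates. This yields the semidirect product $(\mathbb{Z}/2\mathbb{Z})^d\rtimes S_d$ with the stated coset representatives.

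\textbf{Main obstacle.} The main obstacle is the first step: rigorously showing that an element of the normalizer must permute the eigenspaces as blocks. This requires the irreducibility and pairwise non-isomorphism of the pieces, together with care in the degenerate small-$n$ cases, where $\SO_2$ is a torus and one must rule out exotic normalizing elements. I would handle these cases via the distinct characters of the center or torus rather than via irreducibility.
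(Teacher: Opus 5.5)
Your proof is correct and is essentially the paper's argument: you show a normalizing element permutes the eigenspaces (sending pairs to pairs in the CM case), split off the block permutation $b_\sigma$, and analyze the block-diagonal remainder factor by factor to get $(\mathbb{Z}/2\mathbb{Z})^d$. The one difference is in the first step, where you use the multiplicity-free isotypic decomposition of $V_\ell$ under $G_\ell^\circ$ and the orthogonality of $g$, while the paper uses that $g$ normalizes the centralizer $F\otimes\mathbb{Q}_\ell$ and commutes with the adjoint involution $\dagger$. These are equivalent; note that the orthogonality argument, not the ``single simple factor'' one, is the rigorous reason pairs go to pairs, and your $\SO_2$ worry is moot since $n\ge 3$ when $F$ is totally real.
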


\begin{proof}
    In all cases, since $G_{\ell}$ normalizes $G_{\ell}^{\circ}$, it normalizes its centralizer $F \otimes \mathbb{Q}_{\ell}$. Thus, $x \in G_{\ell}$ acts as a $\mathbb{Q}_{\ell}$-algebra automorphism on $F \otimes \mathbb{Q}_{\ell}$, and permutes the $F \otimes \mathbb{Q}_{\ell}$-eigenspaces of $V_{\ell}$. 

    In the totally real case, consider $x \in N_{\Or}(G_{\ell}^{\circ})$. Then by the above description, $x$ sends $V_{\ell}^{\tau_i}$ to $V_{\ell}^{\tau_{\sigma(i)}}$ for some $\sigma \in S_d$. Letting $b_{\sigma}$ be the block permutation matrix representing $\sigma$, note that $y = b_{\sigma}^{-1} x$ fixes each $F \otimes \mathbb{Q}_{\ell}$-eigenspace and that by definition, we may write
    \[y = \diag(q_{\tau_1}, q_{\tau_2}, \dots, q_{\tau_d})\] 
    for $q_{\tau_i} \in \Or(V_{\ell}^{\tau_i}, \psi_{\ell}^{\tau_i})$. Then fix a diagonal reflection 
    \[r = \diag(-1, 1, \dots, 1) \]
    and let $\delta_i \in \{0,1\}$ satisfy $\det(q_{\tau_i}) = (-1)^{\delta_i}$. Then letting 
    \[g_{\tau_i} = r^{\delta_i} q_{\tau_i} \in \SO(V_{\ell}^{\tau_i}, \psi_{\ell}^{\tau_i})\]
    we have the equality 
    \[x = b_{\sigma} \, \diag(r^{\delta_1}, \dots, r^{\delta_d}) \, \diag(g_{\tau_i}, \dots, g_{\tau_d}) \]
    and thus taking the quotient by $G^{\circ}_{\ell}$, we note that each coset may be represented by $((\delta_1, \dots, \delta_d), \sigma) \in (\mathbb{Z}/2\mathbb{Z})^d \rtimes S_d$. For the reverse inclusion, by definition every block permutation and block reflection is orthogonal and normalizes $G^{\circ}_{\ell}$, with only the identity element landing in $G_{\ell}^{\circ}$. 

    When $F$ is CM, let $\dagger$ denote the adjoint involution induced by $\psi_{\ell}$, which restricts to complex conjugation on $F \otimes \mathbb{Q}_{\ell}$. Since an element $x \in N_O(G^{\circ}_{\ell})$ is orthogonal, we note that 
    \[(xax^{-1})^{\dagger} = xa^{\dagger}x^{-1} \]
    and thus the automorphism on $F \otimes \mathbb{Q}_{\ell}$ induced by $x$ commutes with complex conjugation. Thus, $x$ must permute the $F^+ \otimes \mathbb{Q}_{\ell}$-eigenspaces of $V_{\ell}$, of which there are $d$. Independently of said permutation, $x$ may preserve or exchange the corresponding $F \otimes \mathbb{Q}_{\ell}$-eigenspaces. This data naturally gives us the wreath product $(\mathbb{Z}/2\mathbb{Z})^d \rtimes S_d$. Suppose the action of $x$ corresponds to an element $(\epsilon, \sigma) \in (\mathbb{Z}/2\mathbb{Z})^d \rtimes S_d$ and let $b_{\epsilon, \sigma}$ denote this matrix. Then write
    \[y = b_{\epsilon,\sigma}^{-1}x, \]
    which preserves each $F \otimes \mathbb{Q}_{\ell}$ eigenspace of $V_{\ell}$. In the basis of \ref{eq.CMbases}, it may be written as 
    \[y = \diag(A_1, B_1, \dots, A_d, B_d) \]
    and preserving the form $\psi_{\ell}^{\tau_i}$ in the chosen basis implies $(B_i^T)^{-1} = A_i$, so $y \in G^{\circ}_{\ell}$. Consequently, every coset of $N_O(G^{\circ})_{\ell}/G_{\ell}^{\circ}$ can be represented by a unique $b_{\epsilon,\sigma}$ corresponding to $(\epsilon, \sigma) \in (\mathbb{Z}/2\mathbb{Z})^d \rtimes S_d$. 
\end{proof}

\subsection{Picard Rank Jump Detection} In this section, we explain how to compute the constant $a_c$ corresponding to a Picard rank jump of size $c \geq 0$ as in Theorem \ref{thm.main}(3) and prove said theorem. 

\subsubsection{Definition of $a_c$}\label{sec.a}
For 
\[b\in\pi_0(G_\ell),\]
write its image under the isomorphism of Proposition \ref{prop.normalizer} as 
\[
(\epsilon, \sigma) \in (\mathbb{Z}/2\mathbb{Z})^d \rtimes S_d.
\]
Let $\mathcal{C}(\sigma)$ denote the set of orbits of
$\sigma$ acting on $\{1,\ldots,d\}$, including
singleton orbits. For \(C\in\mathcal{C}(\sigma)\), define the \emph{flip parity} of $C$ as 
\[ \epsilon(C)
    :=
    \sum_{i\in C}\epsilon_i
    \in\mathbb{Z}/2\mathbb{Z}.
\]
Then for $b$ as above, let  
\[J_C(b)
    :=
    \begin{cases}
        0, & \text{if } \epsilon(C) = 0
    \in\mathbb{Z}/2\mathbb{Z} ,\\
        2|C|, & \text{if } \epsilon(C) = 1 \in \mathbb{Z}/2\mathbb{Z}.
    \end{cases}
\]
Finally, we define 
\[
    J(b)
    :=
    \sum_{C\in\mathcal{C}(\sigma)}J_C(b)=
    2\sum_{\substack{C\in\mathcal{C}(\sigma)\\
                     \epsilon(C)=1}}|C|.
\]

Equivalently, embed
\[
    (\mathbb{Z}/2\mathbb{Z})^d\rtimes S_d
    \hookrightarrow
    S_{\{\pm1,\ldots,\pm d\}}
\]
in the natural way. Using standard facts about the wreath product, for $C\in\mathcal{C}(\sigma)$, the signed set
\[
    C\cup(-C)
\]
is the orbit of two cycles of length $|C|$ when $\epsilon(C)=0$, and is that of a single cycle of length $2|C|$
containing both $i$ and $-i$ when $\epsilon(C)=1$.
Consequently, $J(b)$ is the sum of the lengths of the cycles
in the signed permutation corresponding to $b$ that contain both $i$ and
$-i$ for some $i$.

In terms of the action on the set of $F \otimes \mathbb{Q}_{\ell}$-eigenspaces of $V_{\ell}$, the action of $\epsilon_i = 1 \in \mathbb{Z}/2\mathbb{Z}$ on $V_{\ell}^{\tau_i}$ is by reflection. $J(b)$ is the twice the total number of $F \otimes \mathbb{Q}_\ell$-eigenspaces that belong to an orbit on which $b$ acts with an odd number of reflections.

For $c\geq0$, define
\[
    a_c
    :=
    \#\left\{
        b\in\pi_0(G_\ell):J(b)=c
      \right\}.
\]

\begin{proof}[Proof of Theorem \ref{thm.main}]
    From Proposition \ref{prop.Chebotarev}, we would like to study the minimum number of eigenvalues of a given component. To do so, we will start with an element of the neutral component $\mathcal{G}_{\ell}^{\circ}$ and study the translates of it under the action of the component group as described in \ref{eq.normalizer} and Proposition \ref{prop.normalizer}.

    We start with the simplest case, where $F$ is CM and $n = \dim_F V_F$ is even. We construct an element $g \in {G}^{\circ}_{\ell}(\overline{\mathbb{Q}_{\ell}})$ with no roots of unity as eigenvalues and all of its translates satisfying the same property. Using the description of \ref{eq.CMblocks}, let $g$ be such that 
    \[M_{\tau_{i,1}} = \diag(R_{1}, \dots, R_{n/2}) \]
    where each $R_{j}$ is a $2 \times 2$ rotation of infinite order and the eigenvalues of $R_j$ are distinct. Then by Proposition \ref{prop.normalizer}, an element $b \in \pi_0$ acts on the embeddings $\{\tau_{1,1}, \tau_{1,2}, \dots, \tau_{d,1}, \tau_{d,2}\}$ by an element of $(\mathbb{Z}/2\mathbb{Z})^d \rtimes S_d$. Then for any $k \geq 0$, we write $b^k(\tau_{i,j})$ as the $k$-fold iterate of this action. Letting $m$ be the order of $b$, we note that $(bg)^m$ is again block diagonal. Indeed,
    \[(bg)^m = \diag\left(\prod_{j=0}^{m-1} M_{b^j(\tau_{1,1})}, \prod_{j=0}^{m-1} M_{b^j(\tau_{1,2})}, \dots, \prod_{j=0}^{m-1} M_{\,b^j(\tau_{d,1})}, \prod_{j=0}^{m-1} M_{b^j(\tau_{d,2})}\right)\]
    Since by our choice of $g$ we have $(M_{\tau_{i,j}}^{T})^{-1} = M_{\tau_{i,j}}$, we note that $(bg)^m$ is a block diagonal matrix where each block consists of a finite product of infinite order rotations. In particular, since $(bg)^{m}$ has no eigenvalues that are equal to a root of unity, we conclude $bg$ also has none. Applying Proposition \ref{prop.Chebotarev}, this shows (1). 

    Next let $F$ be totally real and $n$ is odd. We want to show that in each component of $\mathcal{G}_{\ell}$, every element has at minimum $d = [F:\mathbb{Q}]$ eigenvalues equal to a root of unity. We start with an element $g \in \mathcal{G}_{\ell}^{\circ}$, which by Lemma \ref{lemma.blocks} may be written in the form of
    \[g = \diag(M_{\tau_1}, \dots, M_{\tau_d}) \]
    with each block in the special orthogonal group of $V_{\ell}^{\tau_i}$ as in \ref{eq.TRblocks}. Note that since $\dim_F V_F$ is odd, each element of $\SO(V_{\ell}^{\tau_i}, \psi_{\ell}^{\tau_i})$ has at least one eigenvalue equal to 1. Let $M_{\tau_i}$ be a matrix consisting of $2 \times 2$ infinite order rotations with mutually disjoint spectra as above, with the final entry being $1$. As above, consider the image of $b \in \pi_0(G_{\ell})$ in $(\mathbb{Z}/2\mathbb{Z})^d \rtimes S_d$ and for convenience, set the reflection $r$ as in Proposition \ref{prop.normalizer} to be 
    \[r = \diag(1,1, \dots, -1).\]
    An argument as above shows that $(bg)^m$ is block diagonal with each block lying in $\SO(V_{\ell}^{\tau_i}, \psi_{\ell}^{\tau_i})$, meaning $(bg)^m$ has at least $d = [F:\mathbb{Q}]$ eigenvalues that are roots of unity. Our choice of $bg$ attains this minimum. This shows (2). 
    
    Now we prove (3). Suppose first that $F$ is totally real and that $n$ is even. As above, write 
    \[g = \diag(M_{\tau_1}, \dots, M_{\tau_d}). \]
    Then for $b \in \pi_0$, consider its image $(\gamma,\sigma) \in H$, and write $\gamma = (\gamma_1, \dots, \gamma_d)$ and $\sigma = \prod_{i} \sigma_i$ as in \ref{sec.a}. Let $m$ be the order of $\sigma$. As above, $(bg)^m$ is a block diagonal matrix which we analyze block by block. For $k \in \mathrm{supp}(\sigma_i)$, the $k$-th block is of the form
    \begin{equation}\label{eq.gamma}
        \left( \prod_{j=0}^{m_i-1} \gamma_{\sigma_i^j(k)} M_{\sigma_i^j(\tau_k)} \right)^{m/m_i},
    \end{equation}
    where $m_i$ is the order of $\sigma_i$ and $\gamma_i$ acts by a reflection matrix on $V_{\ell}^{\tau_i}$ when $\gamma_i = 1 \in \mathbb{Z}/2\mathbb{Z}$ and by the identity otherwise. We note that before exponentiating, the above product inside the parentheses is orthogonal and has determinant $-1$ if and only if $\sum_{k \in \operatorname{supp}(\sigma_i)} \gamma_k = 1$ in $\mathbb{Z}/2\mathbb{Z}$. Since $n$ is even, such a matrix must have at least one eigenvalue equal to $-1$ and at least one eigenvalue equal to $1$. Upon raising this product to the power of $m/m_i$, these eigenvalues map are still $\pm 1$, meaning the resulting exponentiated block retains at least two roots of unity as eigenvalues. Since there are $\operatorname{length}(\sigma_i)$ such blocks, we conclude that when $\sum_{k \in \operatorname{supp}(\sigma_i)} \gamma_k = 1$ in $\mathbb{Z}/2\mathbb{Z}$, the corresponding blocks have in total at least $2(\operatorname{length}(\sigma_i))$ eigenvalues equal to a root of unity. In each of these cases, we can see the minimum value is achieved by setting 
    \[M_{\tau_{i,1}} = \diag(R_{1}, \dots, R_{n/2}) \]
    as in the proof for (1) and let $\gamma_i$ act by reflecting the first basis element of each $V_{\ell}^{\tau_i}$.

    When $F$ is CM and $\dim_F V_F = n$ is odd, with the block $\diag(M_{\tau_{i,1}}, M_{\tau_{i,2}})$
    playing the role of $M_{\tau_i}$ in the argument for the totally real case, the idea is similar. Since $n$ is odd, the action of swapping the two $F \otimes \mathbb{Q}_\ell$ sub-eigenspaces over a given $F^+ \otimes \mathbb{Q}_\ell$ eigenspace can be represented by a block permutation matrix with determinant $(-1)^n = -1$. This swap thus acts as a reflection on the corresponding $F^+ \otimes \mathbb{Q}_\ell$ eigenspace, and the conclusion follows identically to the totally real case, noting that the minimum number of roots of unity is again achieved using a choice of $2 \times 2$ rotation matrices.
\end{proof}

\subsection{Comparison with the jump character}

The jump character of \cite{costa2020distribution} records the
determinant of the Galois action on the transcendental part of the second cohomology group. Since $G_{X,\ell}^{\circ}$ is contained in the special orthogonal
group, this character factors through the component group:
\[
    \tau_X\colon
    \pi_0(G_{X,\ell})
    \longrightarrow
    \{\pm1\},
    \qquad
    b\longmapsto\det(b).
\]
On the other hand, $J(b)$ yields the exact number of
root-of-unity eigenvalues forced by the component containing $b$.
In particular, it is possible to have
\[
    J(b)>0
    \qquad\text{but}\qquad
    \det(b)=1.
\]
Such a component corresponds to Picard rank jumps that are not detected by $\tau_X$. 

In the totally real case with $\dim_F V$ even, this is
group-theoretically possible if the image of the component group inside $(\mathbb{Z}/2\mathbb{Z})^d \rtimes S_d$ contains an element having an even total number of reflections but
an odd number of reflections around at least one cycle.  

In the CM case, we provide specific examples in which there are Picard rank jumps invisible to the jump character.

\subsection{Examples}\label{subsec.examples}

In the number field case, the following lemma from Will Sawin simplifies computations. 

\begin{lemma}[\cite{sawin_distribution_nodate}]
    Let $G$ be the motivic Galois group of the second cohomology group of a K3 surface $X$ defined over a number field $K$. Then for some embedding $F \hookrightarrow \bar{K}$, $KF$ is Galois over $K$ and the image of $G$ in $\Aut(F)$ is $\Gal(KF/K)$. 
\end{lemma}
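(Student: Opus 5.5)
The plan is to realize the motivic Galois group concretely through its action on the Hodge endomorphism field $F$ and compare it with the Galois action on $F$ viewed inside $\bar{K}$. By Tankeev's Theorem \ref{thm.MT} (and Deligne's absolute Hodge result, Remark \ref{rem.absolutehodge}), the identity component of $G$ is the Mumford-Tate group. By Proposition \ref{prop.centralizer}, this group is the centralizer of $F$. Hence $G$ normalizes $F$, exactly as in the proof of Proposition \ref{prop.normalizer}, and conjugation gives a homomorphism $G \to \Aut(F)$ that kills $G^{\circ}$.

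The first step is to identify this action with a Galois action. The elements of $F$ are absolute Hodge classes in $V \otimes V^{\vee} \subset H^2 \otimes H^2(1)$. By Deligne's theory they are defined over a finite extension of $K$, and the motivic Galois group acts on the $\mathbb{Q}$-span of these classes through $\Gal(\bar{K}/K)$. So the action of $G$ on $F$ factors through the Galois action on absolute Hodge cycles: $\sigma \in \Gal(\bar{K}/K)$ sends an endomorphism $e$ to its conjugate ${}^{\sigma}e$.

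The second step, which I expect to be the main obstacle, is to produce an embedding $F \hookrightarrow \bar{K}$ that intertwines this action with the naive Galois action on elements of $\bar{K}$. I would use the action of $F$ on the line $H^{2,0}(X)=H^0(X,\omega_X)$. This line is defined over $K$ in de Rham cohomology, and each $e\in F$ preserves it, acting by a scalar $\iota(e)$. The map $\iota:F\to\bar K$ is a field embedding, because $H^{2,0}\subset V_{\mathbb{C}}$ is a faithful $F$-line. Since de Rham realizations of absolute Hodge cycles are algebraic and Galois compatible, we get $\iota({}^{\sigma}e)=\sigma(\iota(e))$. Concretely, ${}^{\sigma}e$ acts on $H^{0}(X,\omega_X)\otimes_{K,\sigma}\bar K$ by the conjugated scalar.

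Granting this, $\iota(F)$ is stable under $\Gal(\bar{K}/K)$, so $K\iota(F)=KF$ is Galois over $K$. The image of $G$ in $\Aut(F)$ is then identified, via $\iota$, with the restriction of $\Gal(\bar K/K)$ to $F$, which is $\Gal(KF/K)$.

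The delicate point is checking that the de Rham scalar is well defined over $\bar{K}$ and is compatible with comparison isomorphisms. This comes down to knowing that endomorphisms of $V$ are absolute Hodge, which holds for K3 surfaces via Kuga--Satake. It also requires knowing that the field of definition of these cycles is exactly the field cut out by the component group, which follows from Tankeev's Mumford-Tate theorem.
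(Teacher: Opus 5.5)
The paper does not prove this lemma. It is quoted from Sawin's unpublished manuscript, with only the remark that for K3 surfaces with CM it follows from the main theorem of complex multiplication. Your argument is therefore an independent route, and its core is correct.

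Define $\iota(e)\in\bar K$ as the scalar by which the de Rham component of $e\in F$ acts on the $K$-rational line $H^0(X,\Omega^2_{X/K})\otimes_K\bar K$. The identity $\iota({}^{\sigma}e)=\sigma(\iota(e))$ is then immediate, because $\sigma$ acts on $H^2_{dR}(X/K)\otimes_K\bar K$ through the second factor. This gives $\Gal(\bar K/K)$-stability of $\iota(F)$. Hence $K\iota(F)/K$ is Galois, and restriction identifies $\Gal(K\iota(F)/K)$ with the image of the Galois action in $\Aut(F)$. To conclude, combine this with the fact that $G$ acts on $G^{\circ}$-invariants through the canonical surjection $G\to\pi_0(G)\cong\Gal(K_X/K)$, via the Galois action on absolute Hodge cycles. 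Here $K_X$ is the field of definition of all absolute Hodge classes in the tensor algebra.

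Compared with the CM-theoretic route the paper alludes to, yours is uniform in $F$ (totally real or CM, any $n=\dim_F V_F$). It needs only Deligne's theorem that Hodge classes on products of K3 surfaces are absolute Hodge. In the CM case $\iota(F)$ is exactly the reflex field of the Hodge cocharacter. There the main theorem of CM gives more (an explicit reciprocity law for the action), but only when $\MT(V)$ is a torus.

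A few surrounding claims need correcting.

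\begin{enumerate}
\item Tankeev's theorem plays no role in the lemma as stated. $G^{\circ}=\MT(V)$ comes from Deligne's absolute Hodge theorem alone; Tankeev is what the paper needs afterwards to pass from $G$ to $G_\ell$.
\item That $G$ normalizes $F$ does not follow directly from Proposition \ref{prop.centralizer}, which says $\MT$ is the centralizer of $F$. What you need is the converse statement $F=\End(V)^{G^{\circ}}$, i.e.\ the equality $(V\otimes V^{\vee})^{\MT(V)}=F$ stated just before that proposition. $G$ visibly preserves this subspace.
\item Your closing requirement, that the field of definition of these cycles be the field cut out by the component group, is unnecessary. If it refers to the elements of $F$, it is also false. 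In the totally real case the $(\mathbb{Z}/2\mathbb{Z})^d$ factor of Proposition \ref{prop.normalizer} acts trivially on $F$. For instance, when $F=\mathbb{Q}$ and the jump character is nontrivial, $F$ is defined over $K$ while $\pi_0\cong\mathbb{Z}/2\mathbb{Z}$.
\end{enumerate}

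All you actually use is the surjectivity of $\Gal(\bar K/K)\to\pi_0(G)$, which is part of the formalism of motivic Galois groups.
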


In particular, since the base change of $G$ to $\mathbb{Q}_{\ell}$ is $G_{\ell}$, this identifies the Galois action of $F$ with the action on $V_{\ell}$. Thus, when $\ell$ is split, our description of the permutation on the $F \otimes \mathbb{Q}_{\ell}$-eigenspaces of $V_{\ell}$ agrees with the action of $\Gal(KF/K)$ on the embeddings $F \hookrightarrow \mathbb Q_\ell$. In particular, our proof above recovers the formula in \cite[Theorem 7.7(b)]{elsenhans2024component}. We also remark that in the case where our K3 surface has CM, as below, this lemma is a consequence of the main theorem of complex multiplication. 

\begin{example}\label{ex.jacobian}
    Let $C$ be the genus $2$ curve given by $y^2 = x^5 - 1$ and $X = \mathrm{Kum}(\mathrm{Jac}(C))$ be the Kummer surface associated to the Jacobian of $C$. Then in Example 8.2 of \cite{elsenhans2024component}, they show 
    \[\rho(\bar{X}) = 18, \quad F = \mathbb{Q}(\zeta_5) \]
    and the jump character $\tau_X = \left(\frac{5}{\cdot} \right)$. With our notation, $d = 2$ and $\dim_F V_F = 1$. Note that the CM type of $Jac(C)$ is primitive, and is thus not geometrically isogenous product of elliptic curves. 
    
    Elsenhans-Jahnel also show $\pi_0(G_{X,\ell}) \cong \Gal(F/\mathbb{Q})$ (Theorem 7.7 of \emph{\frenchspacing{loc. cit.}}). The injection $\pi_0(G_{X,\ell}) \rightarrow (\mathbb{Z}/2\mathbb{Z})^2 \rtimes S_2$ is given by the natural action of $\Gal(F/\mathbb{Q})$ on the embeddings of $F = \mathbb{Q}(\zeta_5)$. In particular, writing $\gamma$ as the generator of $\Gal(F/\mathbb{Q}) \cong \mathbb{Z}/4\mathbb{Z}$ and labelling the conjugate pairs of embeddings as 
    \[P_1 = \{\tau_1, \tau_4\}, \quad P_2 = \{\tau_2, \tau_3\}, \quad \tau_a(\zeta_5) = \zeta_5^a \]
    we have 
    \begin{align*}
        1 &\mapsto ((0,0),1),\\
        \gamma &\mapsto ((0,1),(12)),\\
        \gamma^2 &\mapsto ((1,1), 1),\\
        \gamma^3 &\mapsto ((1,0),(12))
    \end{align*}
    We thus have the following table 
    \begin{center}
        \begin{tabular}{|c|c|c|c|}
            \hline
            Frobenius class & $J(b)$ & $\det(b)$ & $p \mod 5$\\
            \hline
            $1$ & $0$ & $1$ & $1$\\
            \hline
            $\gamma$ & $4$ & $-1$ & $2$\\
            \hline 
            $\gamma^2$ & $4$ & $1$ & $4$\\
            \hline
            $\gamma^3$ & $4$ & $-1$ & $3$\\
            \hline
        \end{tabular}
    \end{center}
    Notably, $J(b_{\gamma^2}) = 4$ whereas $\tau_X(\gamma^2) = 1$. These primes satisfying $p \equiv 4 \mod 5$ have natural density $1/4$, contributing to a total of density $3/4$ of primes satisfying $\rho(\overline{X}_p) = 22$. 
\end{example}
The following example has the jump character completely trivial.  
\begin{example}\label{ex.product}
    Consider the Kummer surface 
    \[X = \mathrm{Kum}(E_1 \times E_2) \]
    of the product of elliptic curves  
    \[E_1: y^2 = x^3 + x \qquad  E_2: y^2 = x^3 + 4x^2 + 2x  \]
    in Example 8.3 of \cite{elsenhans2024component}, Elsenhans and Jahnel prove that 
    \[\rho(\bar{X}) = 18, \quad F = \mathbb{Q}(\sqrt{2}, i) \]
    and thus $\dim_F V_F = 1$. Notably, they also show that the jump character is trivial on $X$.

    The injection $\pi_0(G_{X,\ell}) \rightarrow (\mathbb{Z}/2\mathbb{Z})^2 \rtimes S_2 \cong D_8$ is given by the natural action of $\Gal(F/\mathbb{Q})$ on the embeddings of $F = \mathbb{Q}(\sqrt{2},i)$. In particular, let $\Gal(F/\mathbb{Q}) = \{1, c, s, cs\}$ where
    \begin{align*}
        c(i) = -i, \quad c(\sqrt{2}) = \sqrt{2}\\
        s(i) = i, \quad s(\sqrt{2}) = -\sqrt{2}
    \end{align*}
    Then the elements of $\pi_0$ in $D_8$ are given by 
    \[\{((0,0),1), ((1,1),1), ((0,0),(12)), ((1,1), (12)) \}. \]
    Most notably, the element $c$ corresponds to $b_c = ((1,1),1)$. From our definition in \ref{sec.a}, we have $J(b_c) = 4$ whereas $\det(b_c) = 1$. Thus, the natural density of rational primes $p$ such that $\rho(\overline{X}_p) = 22$ is $1/4$. Concretely, they correspond to primes $p$ such that both elliptic curves are supersingular, which is equivalent to $p \equiv 7 \mod 8$. 
\end{example}

\subsubsection{Splittings of abelian varieties}

Our result may also be applied to a class of abelian varieties and the unitary Shimura varieties that parametrize them. 

Let $F$ be a CM field of degree $2d$ with maximal totally real subfield $F^+$. Let $\Sigma = \{\tau_1, \dots, \tau_d\}$ be a CM type consisting of embeddings of $F$ into $\overline{\mathbb{Q}}$, with complex conjugates $\bar{\tau}_j$. Fix an integer $r \geq 1$. For any $a \in F$, consider the polynomial 
$$ P_a(T) = \prod_{j=1}^d (T - \tau_j(a))^{p_j} (T - \bar{\tau}_j(a))^{q_j} \in \overline{\mathbb{Q}}[T] $$
where $(p_1, q_1) = (r, 1)$ and $(p_j, q_j) = (r+1, 0)$ for $j > 1$. Let $E \subset \overline{\mathbb{Q}}$ be the field generated by the coefficients of $P_a(T)$ for all $a \in F$. Thus, for any $a \in \mathcal{O}_F$, we have $P_a(T) \in \mathcal{O}_E[T]$.

Consider the moduli problem that associates to a locally Noetherian $\mathcal{O}_E$-scheme $S$ the groupoid of triples $(B, i, \lambda)$, where $B \rightarrow S$ is an abelian scheme of relative dimension $d(r+1)$, $i: \mathcal{O}_F \hookrightarrow \mathrm{End}_S(B)$ is a ring homomorphism, and $\lambda: B \rightarrow B^{\vee}$ is a principal polarization, such that:
\begin{enumerate}
    \item $i(a)^{\dagger} = i(\bar{a})$, where $\dagger$ is the Rosati involution induced by $\lambda$ and $\bar{a}$ is complex conjugation on $F$.
    \item The action of $i(a)$ on $\mathrm{Lie}(B)$ has characteristic polynomial $\varphi_*(P_a(T))$, where $\varphi_*: \mathcal{O}_E[T] \rightarrow \mathcal{O}_S[T]$ is the map induced by the structure morphism $S \rightarrow \mathrm{Spec}(\mathcal{O}_E)$.
\end{enumerate}

This functor is represented by a Deligne--Mumford stack $\mathcal{M}_{F, \Sigma}(r,1)$ over $\mathcal{O}_E$, which is a disjoint union of unitary Shimura varieties. Similarly, we may define the moduli space $\mathcal{M}_{F,\Sigma}(1,0)$, where the signature condition requires the action of $\mathcal{O}_F$ on the Lie algebra to have signature $(1,0)$ at every embedding $\tau_j \in \Sigma$. Equivalently, for any $a \in \mathcal{O}_F$, the characteristic polynomial of its action on the Lie algebra is exactly $\prod_{j=1}^d (T - \tau_j(a))$.

Then, as in the construction of \cite[9.3]{shankar2022exceptional}, there exists a morphism of Shimura varieties corresponding to maps between motives from the GSpin Shimura variety in \S 2 and an orthogonal Shimura variety. We may thus obtain a similar result in this case. However, notably we may need to pass to the field over which the CM point is defined. 

\section{Infinitude of Reductions with Cycle of Prescribed Norm}

In this section we combine the methods of \cite{maulik2022picard} and \cite{maulik2022reductions} to prove Theorem \ref{thm.infinitude}. 

\subsection{Setup and Strategy}
Let $(L,Q)$ be a quadratic lattice over $\mathbb{Z}$ of signature $(b,2)$ with $b \geq 2$ such that $p \nmid \mathrm{disc}(L)$, and let $\mathscr{S}$ denote the canonical integral model of the GSpin Shimura variety associated to $(L,Q)$. Let $\mathcal{X} \rightarrow C$ be a family of K3 surfaces defined over a proper curve $C$ of characteristic $p \geq 5$ and suppose there exists a finite morphism $C \rightarrow \mathscr{S}_{\bar{\mathbb{F}}_p}$ such that the generic point of $\mathcal{X}$ maps to the ordinary locus. For a point $P \in C(\bar{\mathbb{F}}_p)$, consider the formal neighborhood $\hat{C}_P = \bar{\mathbb{F}}_p[[t]]$. Write $A := A_{\hat{C}_P}^{KS}$ for the pullback of the Kuga-Satake abelian variety over $\mathscr{S}_{\bar{\mathbb{F}}_p}$. We define $L_n$ to be the lattice of special endomorphisms of $A \mod t^n$, where $A \mod t^n$ fits into the following Cartesian diagram
\[\begin{tikzcd}
	{A \text{ mod } t^n} & A & {A^{KS}_{\bar{\mathbb{F}}_p}} \\
	{\Spec \bar{\mathbb{F}}_p[t]/t^n} & {\hat{C}_P} & {\mathscr{S}_{\bar{\mathbb{F}}_p}}
	\arrow[from=1-1, to=1-2]
	\arrow[from=1-1, to=2-1]
	\arrow[from=1-2, to=1-3]
	\arrow[from=1-2, to=2-2]
	\arrow[from=1-3, to=2-3]
	\arrow[from=2-1, to=2-2]
	\arrow[from=2-2, to=2-3]
\end{tikzcd}\]
Then it follows from the moduli-theoretic definition that the local intersection number $i_P(C.Z(m))$ between $C$ and $Z(m)$ at $P$ can be computed as
\[i_P(C.Z(m)) = \sum_{n=1}^{\infty} \# \{v \in L_n : Q(v) = m \}. \]

We now define our indexing set of special divisors. Let $D$ be a square class represented by $L$; for $b \geq 3$, this imposes no restriction on $D$. Then let 
\[S_{D}= \{Dx^2: x \in \mathbb{Z}_{>0}, (x,2p(\mathrm{disc}L)) = 1 \}\]
and  
\[S_{D,M} = \{m \in S_{D}: M \leq m \leq 2M \}. \]
Notably, $S_{D}$ is a density 0 set in the natural numbers. As in prior works, our strategy is to compare the sum $\sum_{m \in S_{D,M}} C.Z(m)$ with the corresponding sums of local intersection numbers. At a supersingular point $P$, we show that the sum 
\[\sum_{m \in S_{D,M}} i_P(C.Z(m)) = o\left(\sum_{m \in S_{D,M}} C.Z(m)\right) \]
as $M \to \infty$. On the other hand, we show the contribution of non-supersingular points asymptotically satisfies
\[\sum_{P \text{ non-s.s.}}\sum_{m \in S_{D,M}} i_P(C.Z(m)) = \alpha \sum_{m \in S_{D,M}} C.Z(m) \]
for some $0 < \alpha < 1$.

\subsection{Global Intersection}

We use Borcherds theory to control the asymptotic of the global intersection number by relating it to the coefficients of a vector valued modular-form. For a more detailed summary, see \cite[4.1.4]{maulik2022picard}.

Let $L^{\vee}$ denote the dual of $L$ in $L_{\mathbb{Q}}$ induced by $Q$ and let $\{\mathfrak{e}_{\mu \in L^{\vee}/L} \}$ denote the standard basis of $\mathbb{C}[L^{\vee}/L]$. By the modularity theorem of Borcherds \cite{borcherds1999gross} and \cite{howard2017arithmetic} in the arithmetic case, we have 
\[-(\omega.C) + \sum_{m=1}^{\infty} Z(m).C \]
is the $\mathfrak{e}_0$-component of a vector-valued form, where $\omega$ is the line bundle of modular forms on $\mathscr{S}_{\bar{\mathbb{F}}_p}$. More specifically, the Eisenstein part is equal to the $\mathfrak{e}_0$ component of $-(\omega.C)E_0$, where $E_0$ is a certain vector-valued Eisenstein form. Let $q_L(m)$ denote the coefficient of $q^m \mathfrak{e}_0$ in the $q$-expansion of $E_0$.

\begin{proposition}\label{prop.global}
    For all $m$ representable by $(L,Q)$ and $b = 2$ we have $m^{1-\epsilon} \ll_{\epsilon,L} |q_L(m)| \ll m^{1 + \epsilon}$. For $b \geq 3$ we have  $|q_L(m)| \asymp m^{b/2}$. Thus
    \[Z(m).C = |q_L(m)|(\omega.C) + \begin{cases}
    O_{\epsilon}(m^{1/2 + \epsilon}) & b = 2\\
    O(m^{(b+2)/4}) & b \geq 3
    \end{cases} .\]
    In particular, $\sum_{m \in S_{D,M}} Z(m).C \asymp (\omega.C) \sum_{m \in S_{D,M}} |q_L(m)| \asymp M^{(b+1)/2}$. 
\end{proposition}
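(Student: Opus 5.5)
The proof splits into three parts: bounds on the Eisenstein coefficients $q_L(m)$, the error from the cuspidal part, and a summation over $S_{D,M}$.

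\textbf{Eisenstein coefficients.} We use the explicit formula of Bruinier--Kuss for the $\mathfrak{e}_0$-coefficient of the vector-valued Eisenstein series $E_0$ attached to a lattice of signature $(b,2)$. The rank is $r = b+2$ and the weight is $k = 1 + b/2$. For $m$ represented by $L$, this formula writes $|q_L(m)|$ as
\[
|q_L(m)| = C_L\, m^{k-1} \cdot L\text{-factor} \cdot \prod_{p \mid 2m\,\mathrm{disc}(L)} (\text{local densities}).
\]
\begin{itemize}
\item When $r$ is even, the $L$-factor is $L(k,\chi_{D_0})^{-1}$ for the quadratic character attached to the discriminant.
\item When $r$ is odd, it is $L(k-1/2,\chi_{\mathcal{D}m})/\zeta(2k-1)$, together with a divisor-type sum.
\end{itemize}

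For $b \geq 3$ we have $k - 1/2 \geq 2$. The $L$-values are then bounded above and below by absolute constants. The local factors at primes $p \mid m$ are bounded above and below uniformly in $m$, once $m$ is represented; this uses the Siegel mass formula and the fact that the local densities of an isotropic lattice of rank $\geq 5$ stabilize. This gives $|q_L(m)| \asymp m^{b/2}$.

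For $b = 2$ we have $r = 4$ and $k = 2$, so $m^{k-1} = m$. The correction factor is a product over $p \mid m$ of terms like $\sum_{j} (\chi(p)/p)^j$, i.e.\ a twisted $\sigma_{-1}$-type sum. Such a sum lies between $\prod_{p\mid m}(1-1/p)$ and $\prod_{p \mid m}(1-1/p)^{-1}$, hence is $m^{\pm\epsilon}$. The anisotropic local densities at the finitely many $p \mid \mathrm{disc}(L)$ are bounded below once $m$ is represented. Together these give $m^{1-\epsilon} \ll |q_L(m)| \ll m^{1+\epsilon}$.

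\textbf{Cusp form error.} By Borcherds modularity, $Z(m).C - |q_L(m)|(\omega.C)$ is the $m$-th coefficient of a vector-valued cusp form of weight $k$ for the Weil representation. We bound these coefficients as follows.
\begin{itemize}
\item For $b=2$ (weight $2$), a Deligne-type bound after passing to scalar-valued forms on $\Gamma(N)$ gives $O_\epsilon(m^{k/2 - 1/2 + \epsilon}) = O_\epsilon(m^{1/2+\epsilon})$.
\item For general $k$, the Hecke bound $O(m^{k/2}) = O(m^{(b+2)/4})$ suffices. For half-integral weight we cannot use Deligne, but the trivial Hecke bound is enough.
\end{itemize}
In both cases the error is of strictly smaller order than the main term, since $m^{(b+2)/4} = o(m^{b/2})$ for $b \geq 3$ and $m^{1/2+\epsilon} = o(m^{1-\epsilon})$.

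\textbf{Summation.} The set $S_{D,M}$ consists of $Dx^2$ with $x \asymp \sqrt{M/D}$ in a set of residue classes of positive density. Hence $\#S_{D,M} \asymp \sqrt{M}$, and each term is $\asymp M^{b/2}$ for $b \geq 3$. This gives $\sum_{m\in S_{D,M}} Z(m).C \asymp M^{(b+1)/2}$.

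For $b=2$ the individual terms are only known up to $M^{\pm\epsilon}$, so we sum more carefully. The coprimality condition $(x, 2p\,\mathrm{disc}L)=1$ means that the local density at primes dividing $2p\,\mathrm{disc}(L)$ is constant along $S_D$. The remaining factor is the multiplicative function $\prod_{\ell\mid x}(1+O(1/\ell))$, whose average over $x \le X$ is a positive constant. This yields the exact order $M^{3/2}$.

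The positivity $\omega.C > 0$ holds because $\omega$ is ample on the Baily--Borel compactification and $C \to \mathscr{S}$ is finite. This gives the claimed $\asymp$.

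\textbf{Main obstacle.} The hardest step is uniform control of the local densities for $b = 2$. That is where the lower bound $m^{1-\epsilon}$ requires the representability hypothesis, and where we will restrict to $m \in S_D$ to avoid the bad primes.
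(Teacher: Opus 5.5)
Your outline is the paper's proof with the citations unpacked. The paper cites Maulik--Shankar--Tang for the Eisenstein asymptotics, which they derive from the Bruinier--Kuss formula as you propose. It then uses Deligne's bound for $b=2$ and the Hecke bound for $b\geq 3$ on the cuspidal difference. Finally it obtains $\asymp M^{(b+1)/2}$ from a divisor-sum average, which is your averaging over $x$.

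One step is false as written. For $b=2$, the local densities at primes $\ell\mid\mathrm{disc}(L)$ where $L_\ell$ is anisotropic are \emph{not} bounded below as $v_\ell(m)\to\infty$.

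\textbf{Counterexample.} The lattice has rank $4$, and $L_\ell$ can be the norm form on the maximal order $\mathcal{O}$ of the quaternion division algebra over $\mathbb{Q}_\ell$. This happens, for instance, when $L$ is a maximal order in an indefinite quaternion algebra over $\mathbb{Q}$ ramified at $\ell$, with $\ell\neq p$. The set $\{x: v(\mathrm{Nrd}(x))=a\}=\pi^a\mathcal{O}^\times$ has volume $\ell^{-2a}(1-\ell^{-2})$, and $\mathrm{Nrd}\colon\mathcal{O}^\times\to\mathbb{Z}_\ell^\times$ is surjective. Together these give $\alpha_\ell(\ell^a u,L_\ell)=(1+\ell^{-1})\ell^{-a}$. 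Hence $|q_L(\ell^a)|=O(1)$, even though $\ell^a$ is represented by $L$.

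\textbf{Consequence.} The lower bound $m^{1-\epsilon}\ll|q_L(m)|$ fails for general representable $m$. It holds only when the valuations of $m$ at primes dividing $2\,\mathrm{disc}(L)$ stay bounded. This is why the paper's proof says ``for our choices of $m$ above.'' It is also why your restriction to $S_D$ is necessary rather than merely convenient. On $S_D$ these local factors are constant, since $Dx^2$ and $D$ differ by a unit square at those primes.

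With that restriction your argument is correct, with one caveat shared with the paper. Constancy cuts both ways: you need $D$ itself to be integrally represented by each $L_\ell$ with $\ell\mid 2\,\mathrm{disc}(L)$, not just by $L_{\mathbb{Q}}$. Otherwise $q_L$ vanishes identically on $S_D$.

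Your $b\geq 3$ claim is fine. Rank $\geq 5$ forces isotropy, so the densities stabilize at a positive value.
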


\begin{proof}
    For our choices of $m$ above, the asymptotic on $|q_L(m)|$ for $b \geq 3$ is by \cite[Theorem 7.4]{maulik2022reductions}, whereas for $b = 2$, it is by \cite[4.3.1, Theorem 4.1.6]{maulik2022picard}. Then the estimate on $Z(m).C$ follows by noting the difference between $-(\omega.C) + \sum_{m=1}^{\infty} Z(m).C$ and the $\mathfrak{e_0}$ component of $-(\omega.C)$ is a cusp form, and applying Deligne's Weil bound on cusp form coefficients (see \cite{deligne1973formes,deligne1974conjecture}) for $b = 2$ and the trivial bound for $b \geq 3$ (see \cite{sarnak1990some}). The final claim follows from \cite[Theorem 7.4]{maulik2022reductions} for $b \geq 3$ and by a divisor sum argument similar to Lemma 4.3.4 of \cite{maulik2022picard}. 
\end{proof}

Thus we have 
\begin{equation}\label{eq.globalbound}
    i_P(C.Z(m)) \leq (C.Z(m)) \ll_{C,\epsilon} \begin{cases}
    m^{1 + \epsilon} & b = 2\\
    m^{b/2} & b \geq 3
    \end{cases}
\end{equation}

Finally, for a point $P \in (C \cap Z(m))(\bar{\mathbb{F}}_q)$, we introduce the notion of the \emph{global intersection number at $P$} in order to compare the global intersection with the local intersection numbers. 

\begin{defn}[{\cite[Def 7.1.3]{maulik2022picard}}]\label{def.globalint}
    Let $H$ denote the Hasse-invariant on $\mathscr{S}_{\bar{\mathbb{F}_p}}$, which cuts out the non-ordinary locus. On the formal neighborhood $\hat{C}_P = \operatorname{Spf} \bar{\mathbb{F}}_p[[t]]$, let $h_P = v_t(H)$ the t-adic valuation of $H$ on $\hat{C}_P$. Define 
    \[g_P(m) = \frac{h_P}{p-1} |q_L(m)|. \]
    In particular, $g_P(m) = 0$ for $P$ ordinary, and 
    \[\sum_{P \in (C \cap Z(m))(\bar{\mathbb{F}}_p)} g_P(m) = |q_L(m)| (\omega.C) \]
    since $H$ is a section of $\omega^{p-1}$. 
\end{defn}

\subsection{Local Intersection}

We now prove that at a fixed point as we take a larger family of $m$, the magnitude of the local intersection number grows at a much slower rate than the global intersection. To count lattice points of bounded norm across a growing family of lattices, we utilize a geometry of numbers method. First, we recall a definition as in \cite{eskin1995singular}. 

\begin{defn}
    For $1 \leq i \leq r = \mathrm{rk}_{\mathbb{Z}} L_n$ we define the \emph{$i$\textsuperscript{th} successive minimum} $\mu_i(n)$ of $L_n$ to be 
    \[\inf \{y \in \mathbb{R}_{>0} : \exists v_{1}, \dots, v_i \in L_n \text{ linearly independent, and } Q(v_j) \leq y^2, 1 \leq j \leq i  \} \]
    Then define $a_i(n) = \prod_{j=1}^i \mu_j(n)$. 
\end{defn}

We have the following lower bounds on $a_i(n)$ that is uniform in $n$ and the rank of our lattice of special endomorphisms. 

\begin{lemma}\label{lemma.sucmin}
    For $b \geq 2$, we may bound $a_i(n)$ by 
    \[a_i(n) \gg_{C, \epsilon_1}  \begin{cases}
    n^{\frac{i}{2 + 2\epsilon}} & b = 2\\
    n^{i/b} & b \geq 3
    \end{cases}\] 
    where we may take $\epsilon_1 > 0$ for $b = 2$ and $\epsilon_1 = 0$ for $b \geq 3$.
\end{lemma}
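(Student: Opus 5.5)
Following \cite{maulik2022picard} and \cite{maulik2022reductions}, the plan is to combine two inputs. The first is a lower bound on the covolume of $L_n$ coming from Grothendieck--Messing/Serre--Tate deformation theory. The second is the Minkowski-type relation between covolume and the product of successive minima. We never work with the individual $\mu_i(n)$ directly; we bound the products $a_i(n)$, which are comparable to covolumes of saturated sublattices.

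\textbf{Step 1 (decay of $L_n$).} At the point $P$, the lattice $L_1$ of special endomorphisms of $A_P^{KS}$ has rank $r \le b+2$, with $r = b+2$ only if $P$ is supersingular. By the decay lemmas of \emph{loc.\ cit.} (the local analysis of the $F$-crystal $\mathbb{L}_{cris}$ near $P$, together with the hypothesis that $C$ is not contained in any $Z(m)$), any sublattice $\Lambda \subseteq L_n$ of rank $i$ has $\operatorname{disc}(\Lambda) \gg_C n^{\kappa i}$ for an exponent $\kappa$ that is uniform in $n$. The source of this bound is that the special endomorphisms which survive to order $t^n$ satisfy congruence conditions whose index grows in $n$.

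We aim for $\kappa = 2/b$ when $b \ge 3$. The mechanism is that the non-liftable directions form a rank-$b$ (or rank-$b+2$) space, and the index of $L_n$ in $L_1$ grows at least like $p^{\lfloor \log_p n\rfloor \cdot(\ldots)}$, which after the standard bookkeeping becomes a power of $n$. When $b = 2$, the extra logarithmic losses in the supersingular decay analysis of \cite[\S 5--6]{maulik2022picard} only give $\kappa = 1/(1+\epsilon)$.

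\textbf{Step 2 (minima versus covolume).} Choose linearly independent $v_1,\dots,v_i \in L_n$ realising $\mu_1(n),\dots,\mu_i(n)$, and let $\Lambda$ be their span. Hadamard's inequality gives $\operatorname{disc}(\Lambda) \le \prod_j Q(v_j) = a_i(n)^2$. Combining this with Step 1,
\[a_i(n)^2 \ge \operatorname{disc}(\Lambda) \gg n^{\kappa i},\]
so $a_i(n) \gg n^{\kappa i/2}$. This equals $n^{i/b}$ for $b \ge 3$ and $n^{i/(2+2\epsilon_1)}$ for $b = 2$. Positive definiteness of $Q$ (Lemma 5.12 of \cite{pera2016integral}) is what makes both the successive minima and Hadamard's inequality meaningful.

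\textbf{Main obstacle.} The difficult step is Step 1: obtaining a discriminant lower bound for every rank-$i$ sublattice, uniformly in $n$, rather than only for $L_n$ itself. The approach I would try first is to note that $\Lambda \otimes \mathbb{Z}_p$ embeds into the space of horizontal sections of the crystal modulo $t^n$. The decay estimates of \cite{maulik2022reductions} (for $b \ge 3$) and \cite{maulik2022picard} (for $b = 2$) are phrased as an element of $L_1$ lifting to $L_n$ only if it lies in $p^{k(n)}L_1$ on a complement of bounded rank. Restricting these estimates to $\Lambda$ should give $\operatorname{disc}(\Lambda) \ge p^{2k(n)(i - c)}$ with $c$ bounded. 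Absorbing the constant $c$ into the implied constant is where uniformity in $n$ has to be checked carefully.
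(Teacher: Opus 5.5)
\textbf{There is a genuine gap: Step 1 is never proved.} Your Step 2 is fine: Hadamard's inequality for the positive definite Gram matrix of $v_1,\dots,v_i$ gives $\operatorname{disc}(\Lambda)\ll a_i(n)^2$. But all the content of the lemma sits in Step 1, and there you give no argument. The exponent $\kappa=2/b$ is chosen to match the target (``we aim for'') rather than derived. The mechanism you sketch, growth of $[L_1:L_n]$, controls only $\operatorname{disc}(L_n)$, not the discriminant of every rank-$i$ sublattice, as you note yourself. The decay results of Maulik--Shankar--Tang are stated separately for ordinary, non-ordinary and supersingular points, with different rates. They are not a uniform lower bound on all sublattices, so extracting one exponent valid at every $P$ and every $n$ would be substantial new work. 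Your account of the $\epsilon$-loss for $b=2$ is also off. It does not come from logarithmic losses in the supersingular decay analysis. It comes from the global bound $|q_L(m)|\ll m^{1+\epsilon}$ (and the cusp-form error) in Proposition \ref{prop.global}.

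\textbf{The missing idea is to compare the local intersection with the global one; this needs no deformation theory.} This is the paper's argument. Let $v\in L_n$ be nonzero with $Q(v)=m=\mu_1(n)^2$. Its reductions modulo $t^j$ lie in $L_j$ for every $j\le n$. So the formula $i_P(C.Z(m))=\sum_{j}\#\{w\in L_j: Q(w)=m\}$ gives $i_P(C.Z(m))\ge n$. Since $C$ is not contained in $Z(m)$, the local intersection numbers are nonnegative and sum to $C.Z(m)$. Then \eqref{eq.globalbound} gives $n\le C.Z(m)\ll m^{b/2}$, or $m^{1+\epsilon}$ when $b=2$. Hence $\mu_1(n)\gg n^{1/b}$, respectively $n^{1/(2+2\epsilon)}$, with constants depending only on $C$ and $\epsilon$, uniformly in $P$ and $n$. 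Monotonicity of successive minima then gives $a_i(n)\ge \mu_1(n)^i$, which is the lemma. Incidentally, your Step 1 with $\kappa=2/b$ is true: it follows from this bound on $\mu_1(n)$ together with Minkowski's second theorem, since every successive minimum of a sublattice $\Lambda\subseteq L_n$ is at least $\mu_1(n)$. Once you have that, Step 2 is a detour.
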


\begin{rem}
    We note that for $b = 2$, the bound in \ref{eq.globalbound} introduces an epsilon loss in our bound on the successive minima. We will see that this is harmless in the non-supersingular case. However, we will have to be more careful at supersingular points, where the rank of the special endomorphism lattices no longer automatically imply that the local intersection contribution is of smaller order than the global intersection. 
\end{rem}

\begin{proof}
    Let $a_1(n) = m^2$ and $v \in L_n$ be a nonzero vector attaining this minimum. Then by definition note that $v \in L_j$ for $j \leq n$, whence by definition of the local intersection number, $i_P(C.Z(m)) \geq n$. But from equation \ref{eq.globalbound}, there exist constants $c, c'$ such that $n < c m^{1 + \epsilon}$ for $b = 2$ and $n < c' m^{b/2}$ for $b = 3$. By definition of $m$, we reach our desired conclusion for $i = 1$. The bounds for $a_i(n)$ follow from the simple observation $\mu_i(n) \geq a_1(n)$ and thus $a_i(n) \geq a_1(n)^{i}$.
\end{proof}

This result on the successive minima allows us to bound the maximum level $n$ such that the lattice $L_n$ represents a particular value of $m$. 

\begin{corollary}\label{cor.upper}
    For $S_{D,M}$ defined above, there exists a constant $c$ depending only on $C$ such that 
    \[\sum_{m \in S_{D,M}} i_P(C.Z(m)) = \sum_{n=1}^{c \, U(X)} \sum_{m \in S_{D,M}} \# \{v \in L_n : Q(v) = m \} \]
    where the upper limit function $U(M) = M^{1 + \epsilon}$ for $b = 2$ and $U(X) = X^{b/2}$ for $b \geq 3$.
\end{corollary}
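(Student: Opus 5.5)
The plan is to truncate the defining sum for the local intersection number. Recall that
\[i_P(C.Z(m)) = \sum_{n=1}^{\infty} \#\{v \in L_n : Q(v) = m\},\]
and that the lattices $L_n$ are nested, $L_{n+1} \subseteq L_n$, since a special endomorphism of $A \bmod t^{n+1}$ restricts to one of $A \bmod t^n$. Summing over $m \in S_{D,M}$, it therefore suffices to find a level $N(M) = c\,U(M)$, with $c$ depending only on $C$ (and $\epsilon$ when $b=2$), such that for every $n > N(M)$ no vector of $L_n$ has norm in $[M, 2M]$. All terms with $n > N(M)$ then vanish, and the stated identity follows, with $U(X)$ in the statement read as $U(M)$.

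\textbf{Key step.} The bound comes from the global estimate \eqref{eq.globalbound}, used exactly as in the proof of Lemma \ref{lemma.sucmin}. Suppose $v \in L_n$ has $Q(v) = m$ with $m \in S_{D,M}$. By nestedness, $v$ (more precisely, its reduction) lies in $L_j$ for every $j \le n$. Hence
\[i_P(C.Z(m)) \ge \sum_{j=1}^{n} \#\{w \in L_j : Q(w) = m\} \ge n.\]
Combining this with $i_P(C.Z(m)) \le C.Z(m)$ and \eqref{eq.globalbound} gives
\[n \ll_{C,\epsilon} m^{1+\epsilon} \quad (b=2), \qquad n \ll_C m^{b/2} \quad (b \ge 3).\]
Since $m \le 2M$, we get $n \le c' (2M)^{1+\epsilon}$ or $n \le c'(2M)^{b/2}$. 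Absorbing the factor $2^{1+\epsilon}$ or $2^{b/2}$ into the constant gives $n \le c\,U(M)$. Equivalently, in the language of Lemma \ref{lemma.sucmin}: for $n > c\,U(M)$ the first successive minimum satisfies $\mu_1(n)^2 > 2M$, so $L_n$ contains no nonzero vector of norm at most $2M$.

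\textbf{Points to check.} The main subtlety is that \eqref{eq.globalbound} must hold uniformly in $m$, with implied constant depending only on $C$ (and $\epsilon$). This follows from Proposition \ref{prop.global}: there $C.Z(m)$ equals $|q_L(m)|(\omega.C)$ plus a cusp-form error, both bounded polynomially in $m$ with constants depending on $L$ and $C$. Since $L$ is fixed once $C \to \mathscr{S}$ is fixed, this is fine. One should also note that $C$ is not contained in any $Z(m)$, so each $C.Z(m)$ is a finite intersection number. This is what makes the inequality $i_P \le C.Z(m)$ meaningful; otherwise the local sum could be infinite. With these checks the corollary reduces to the one-line bound above. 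I expect no serious obstacle beyond keeping track of the $\epsilon$-loss in the case $b = 2$.
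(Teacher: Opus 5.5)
Your proposal is correct and takes essentially the paper's route. The paper invokes Lemma \ref{lemma.sucmin} to get $\mu_1(n)^2 > 2M$ once $n > c\,U(M)$; you simply inline that lemma's proof, using nestedness to get $i_P(C.Z(m)) \ge n$ and then the global bound \eqref{eq.globalbound}, and your reading of $U(X)$ as $U(M)$ and of $c$ as possibly depending on $\epsilon$ when $b=2$ is appropriate.
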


\begin{proof}
    First, suppose $b \geq 3$. Then by Lemma \ref{lemma.sucmin} we note that there exists $c$ only depending on $C$ such that for $n > c X^{b/2}$, we have $a_1(n) > (2X)^{1/2}$. Thus, the minimum value of $Q(v)$ for $v \in L_n$ is at least $2X$, and we conclude. For $b = 2$, the same proof works taking $n > c X^{1 + \epsilon}$ as above. 
\end{proof}

The data of successive minima their products allow us to bound our representation numbers across our varying families of $L$ in a way that only depends on the rank of $L$. We leverage the fact that at a non-supersingular point, the rank is at most $b$ to deduce a sufficient upper bound on the local intersection number at said points. 

\begin{proposition}\label{prop.nonss}
    For a non-supersingular point $P$, we have 
    \[\sum_{m= 1}^{2M} i_P(C.Z(m)) = \begin{cases}
        O(M^{1+\epsilon}) & b = 2\\
        O(M^{b/2} \log X) & b \geq 3
    \end{cases} \]
    In particular, we have the same bounds on $\sum_{m \in S_{D,M}} i_P(C.Z(m))$. 
\end{proposition}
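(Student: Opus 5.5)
We start from the identity $\sum_{m=1}^{2M} i_P(C.Z(m)) = \sum_{n\ge 1}\#\{v\in L_n: 0<Q(v)\le 2M\}$. By Corollary \ref{cor.upper}, the sum over $n$ may be truncated at $n\le c\,U(M)$, where $U(M)=M^{1+\epsilon}$ for $b=2$ and $U(M)=M^{b/2}$ for $b\ge 3$. The point $P$ is non-supersingular, so the special endomorphism lattice of the Kuga--Satake abelian variety at $P$ has rank at most $b$. Since $L_n\subseteq L_1$ and $L_1$ is the special endomorphism lattice at $P$, every $L_n$ is a positive definite lattice of rank $r\le b$. We then bound each lattice-point count by the standard geometry of numbers estimate in terms of successive minima (as in \cite{eskin1995singular}):
\[
\#\{v\in L_n: Q(v)\le 2M\}\ll_r 1+\sum_{i=1}^{r}\frac{M^{i/2}}{a_i(n)}.
\]
The implied constant depends only on $r\le b$, so it is uniform in $n$. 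Subtracting the zero vector removes the constant term.

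Next we insert the lower bounds of Lemma \ref{lemma.sucmin}. For $b\ge 3$ we have $a_i(n)\gg n^{i/b}$, so the count at level $n$ is $\ll \sum_{i=1}^{r} M^{i/2}n^{-i/b}$, and we sum this over $1\le n\le cM^{b/2}$. For $i<b$ the exponent $i/b$ is less than $1$, giving $\sum_n n^{-i/b}\ll (M^{b/2})^{1-i/b}=M^{b/2-i/2}$; the term is then $\ll M^{b/2}$. For $i=b$, which occurs only if $r=b$, the sum is harmonic and gives $\ll M^{b/2}\log M$. Summing over $i$ yields $O(M^{b/2}\log M)$. For $b=2$ we use $a_i(n)\gg n^{i/(2+2\epsilon)}$ and sum up to $n\le cM^{1+\epsilon}$. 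The $i=1$ term is $M^{1/2}\sum_n n^{-1/(2+2\epsilon)}\ll M^{1/2}M^{(1+\epsilon)(1-1/(2+2\epsilon))}=M^{1+\epsilon}$. The $i=2$ term is $M\sum_n n^{-1/(1+\epsilon)}\ll M\cdot M^{(1+\epsilon)\cdot\epsilon/(1+\epsilon)}=M^{1+\epsilon}$. After renaming $\epsilon$, both terms are $O(M^{1+\epsilon})$.

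The final claim is immediate: $S_{D,M}\subseteq[1,2M]$ and local intersection numbers are nonnegative, so the sum over $S_{D,M}$ is bounded by the full sum.

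The main obstacle is the uniformity of the geometry of numbers bound over the varying family $\{L_n\}$. The key is that the bound depends only on the rank and the successive minima, not on any fixed shape of the lattice; this is why we use the rank bound $r\le b$ at non-supersingular points. The second delicate point is the $i=b$ (respectively $i=2$) term, where the exponent of $n$ hits $1$. For $b\ge3$ this produces the logarithm. For $b=2$ the $\epsilon$-loss in Lemma \ref{lemma.sucmin} must be absorbed into the $M^{1+\epsilon}$ bound, and we should check that the exponents combine to exactly $1+O(\epsilon)$ and not something larger.
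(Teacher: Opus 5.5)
Your proposal is correct and follows essentially the same route as the paper: truncate at $n \le c\,U(M)$ via Corollary \ref{cor.upper}, use the rank bound $r \le b$ at non-supersingular points together with the Eskin--Katznelson successive-minima count, and then insert Lemma \ref{lemma.sucmin}, with the logarithm coming from the harmonic sum at $i=b$. The only cosmetic difference is that you discard the zero vector, while the paper keeps the $i=0$ term; that term contributes $c\,U(M)$ and is equally harmless.
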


\begin{proof}
    Since $P$ is not supersingular, $\mathrm{rk}_{\mathbb{Z}} L_n \leq b$. By Corollary \ref{cor.upper} and a basic geometry of numbers argument (see \cite[Lemma 2.4, Equations (5),(6)]{eskin1995singular},)
    \[\sum_{m= 1}^{2M} i_P(C.Z(m))  = \sum_{n=1}^{c\,U(M)} \sum_{m=1}^{2M} \# \{ v \in L_n : Q(v) = m\} \ll \sum_{n=1}^{c\,U(M)} \sum_{i=0}^r \frac{2M^{i/2}}{a_i(n)} \]
    where the implicit constant is absolute. We now apply our bound from Lemma \ref{lemma.sucmin}. 

    For $b = 2$, we have 
    \[ \sum_{n=1}^{c M^{1+\epsilon}} \sum_{i=0}^r \frac{2M^{i/2}}{a_i(n)} \ll_C \sum_{n=1}^{c M^{1+ \epsilon}} \sum_{i=0}^r \frac{(2M)^{i/2}}{n^{\frac{i}{2+2\epsilon}}}\] 
    Using that $r \leq 2$, 
    \[\sum_{m=1}^{2X} i_P(C.Z(m)) \ll M\sum_{n=1}^{c M^{1+\epsilon}} n^{-\frac{1}{1 + \epsilon}} + M^{1/2} \sum_{n=1}^{c M^{1 + \epsilon}} n^{-\frac{1}{2+2\epsilon}} + M^{1 + \epsilon}. \]
    Using the standard estimate on sums of powers, we conclude this is $O(M^{1 + \epsilon})$.

    For $b \geq 3$, for $1 \leq i < b$ the same argument yields the bound
    \[\sum_{n=1}^{c M^{b/2}} \frac{{2M}^{i/2}}{n^{i/b}} = (2M)^{i/2} \sum_{n=1}^{c M^{b/2}} \frac{1}{n^{i/b}} = O(M^{b/2}) \]
    On the other hand, for $i = b$, using the bound on the harmonic sum 
    \[\sum_{n=1}^{cM^{b/2}} \frac{1}{n} = O(\log(M^{b/2})) \]
    we may bound the sum by $O(M^{b/2} \log M)$.
    
\end{proof}

\subsubsection{Supersingular Points}
We now bound the local intersection at a fixed supersingular point $P \in C$. For a fixed $T \in \mathbb{Z}_{> 0}$, we break our intersection number into two a main and error term
\[\sum_{m \in S_{D,M}} i_P(C.Z(m)) = i_P(M,T)_{main} + i_P(M,T)_{err} \]
where we define 
\begin{align*}
    i_P(M,T)_{main} = \sum_{n=1}^T \sum_{m \in S_{D,M}} \#\{v \in L_n | Q(v) = m\}\\
    \label{eq.error} i_P(M,T)_{err} = \sum_{n = T}^{cU(X)} \sum_{m \in S_{D,M}}  \#\{v \in L_n | Q(v) = m\}. 
\end{align*}

To bound the error term, we state a bound on the representation numbers $r_n(m)$ using a more careful geometry of numbers argument. Notably, this result holds for a general positive definite lattice of rank $r \geq 2$.

\begin{proposition}\label{prop.square}
    Suppose the lattice of special endomorphisms $L_n$ at a point $P$ has rank $r$. Then for a fixed $D \in \mathbb{Z}_{>0}$, we have the bound 
    \[\sum_{m = D \ell^2, \ell \text{ prime }, m \leq M } r_n(m) = O_{\delta}\left(M^{\delta} \sum_{i = 0}^{r-1} \frac{M^{i/2}}{a_i(n)}\right)\]
\end{proposition}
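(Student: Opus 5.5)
The plan is to count the vectors $v \in L_n$ with $Q(v) = D\ell^2 \le M$, $\ell$ prime, by comparing with a full lattice-point count in dimension $r-1$. The saving of one power in the geometry of numbers bound (the sum running only to $i = r-1$ rather than $r$) must come from the constraint that $Q(v)$ lies in a single square class. That constraint forces $v$ onto a thin subset of the ball, and this is the sense in which the statement is a ``more careful'' version of the Eskin--Katznelson bound \cite[Lemma 2.4]{eskin1995singular} used in Proposition \ref{prop.nonss}.

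\textbf{Step 1: reduce to a sublattice of rank $r-1$.} Fix a Minkowski-reduced basis $e_1,\dots,e_r$ of $L_n$ with $Q(e_i)\asymp \mu_i(n)^2$. Write $v = w + x e_r$, where $w$ lies in the sublattice $\Lambda = \langle e_1,\dots,e_{r-1}\rangle$ and $x \in \mathbb{Z}$. Then
\[
Q(v) = Q(w) + x[w,e_r] + x^2 Q(e_r).
\]
Because the basis is reduced, $Q(v)\le M$ forces $|x| \ll M^{1/2}/\mu_r(n)$ and forces $w$ into a ball of radius $\ll M^{1/2}$ in $\Lambda$.

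\textbf{Step 2: count solutions for fixed $w$.} For each fixed $w \in \Lambda$, the condition $Q(w+xe_r) = D\ell^2$ is a binary problem in $(x,\ell)$:
\[
4Q(e_r)\,Q(w+xe_r) = \bigl(2Q(e_r)x + [w,e_r]\bigr)^2 - \Delta_w,
\qquad
\Delta_w = [w,e_r]^2 - 4Q(e_r)Q(w).
\]
Setting $y = 2Q(e_r)x + [w,e_r]$, the equation becomes $y^2 - 4Q(e_r)D\ell^2 = \Delta_w$. This is a norm equation in the quadratic order of discriminant $16\,Q(e_r)D$.

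Since $L_n$ is positive definite, $\Delta_w < 0$ whenever $w \neq 0$ (by Cauchy--Schwarz, as $w$ and $e_r$ are independent), so $\Delta_w \neq 0$. Hence the number of solutions $(y,\ell)$ with $|y|,\ell \ll M^{O(1)}$ is bounded by the number of ideals of norm $|\Delta_w|$ in that order, times the units of bounded height. This is $\ll_\delta M^{\delta}$ by the divisor bound, because $|\Delta_w| \ll M^{O(1)}$. The main subtlety here is controlling the unit contribution when $4Q(e_r)D$ is not a square, since the order is then real quadratic and has infinitely many units. Only $O(\log M)$ of these units keep $y$ and $\ell$ in range, and that factor is absorbed into $M^\delta$. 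If instead $4Q(e_r)D$ is a square, the equation factors as a product of two linear forms equal to $\Delta_w \neq 0$, and the divisor bound applies directly.

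\textbf{Step 3: the degenerate term.} The term $w = 0$ needs separate treatment: there $Q(xe_r) = x^2Q(e_r) = D\ell^2$ gives at most $O(M^{1/2}/\mu_r)$ choices of $x$. This is dominated by the $i=0$ term after bounding $\mu_r \gg 1$, or can be folded into $M^\delta$ once rescaled. A related subtlety is that the number of primes $\ell$ with a fixed $\ell^2$ value is at most one, which is used implicitly in the count of Step 2.

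\textbf{Step 4: sum over $w$.} Summing the per-$w$ bound over the $w \in \Lambda$ with $Q(w) \ll M$ and applying \cite[Lemma 2.4]{eskin1995singular} to $\Lambda$, whose successive minima are $\mu_1(n),\dots,\mu_{r-1}(n)$, gives
\[
\#\{w\} \ll \sum_{i=0}^{r-1} \frac{M^{i/2}}{a_i(n)}.
\]
Multiplying by the factor $M^\delta$ from Step 2 yields the claim.

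I expect the main obstacle to be the uniformity in Step 2. The divisor-bound constant must not depend on $n$, even though $Q(e_r)$ grows with $n$. To handle this, I would bound everything polynomially in $M$, using $Q(e_r) \le M$ whenever the relevant $x \neq 0$, so that all implied constants depend only on $\delta$ and $D$.
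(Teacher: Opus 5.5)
Your argument is correct and follows the same route as the paper, but with proof where the paper only cites: it defers entirely to \cite[Proposition 9.1.3(a)]{maulik2022picard}, done there for $r=5$. The route is to freeze all but the last coordinate in a reduced basis, bound the residual Pell-type equation in $(x,\ell)$ by $M^{\delta}$ using a divisor bound, and count the frozen vectors $w$ by geometry of numbers in rank $r-1$.

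Two small repairs are needed.

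In Step 3, the $w=0$ contribution is not dominated by the $i=0$ term, which equals $1$, and it cannot be absorbed into $M^{\delta}$. For example, if $Q(e_r)=D$, then every prime $x=\ell\le\sqrt{M/D}$ contributes. It is dominated by the $i=1$ term instead, since $\#\{x : x^2Q(e_r)\le M\}\le 1+2M^{1/2}/\mu_r(n)\le 1+2M^{1/2}/a_1(n)$. This is exactly where $r\ge 2$ is used; the statement is false for $r=1$.

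In Step 2, the number of ideals of norm $k$ in a non-maximal quadratic order is not bounded by $\tau(k)$; already $\mathbb{Z}[p\sqrt{d}]$ has $p+1$ ideals of index $p^2$. Instead, work in the maximal order $\mathcal{O}_K$ of $K=\mathbb{Q}(\sqrt{N})$, with $N=4Q(e_r)D$. Count the principal ideals $(y+\ell\sqrt{N})\mathcal{O}_K$; there are at most $\tau(|\Delta_w|)$ of them. Then use that the fundamental unit is at least $(1+\sqrt{5})/2$ to get a count of generators in range that is $O(\log M)$ uniformly in $N$.

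Your argument never uses that $\ell$ is prime, so it gives the bound for all $m=Dt^2$. That is what Proposition \ref{prop.errorterm} actually needs, since $S_{D,M}$ does not restrict $x$ to primes.
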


\begin{proof}
    The same argument as that of \cite[Proposition 9.1.3(a)]{maulik2022picard} holds, their argument being the case where $r = 5$. 
\end{proof}

\begin{proposition}\label{prop.errorterm}
    We have the following bound on the error term 
    \[i_P(M,T)_{err} = O_{\epsilon,D}\left(\frac{M^{(b+1)/2 + \epsilon}}{T^{1/b}} + M^{b/2 + \epsilon}\log M \right)\]
    In particular, choosing $T \asymp M^{\epsilon_0}$ for sufficiently small $\epsilon_0$, we have $i_P(M,T)_{err} = o(M^{(b+1)/2})$.  
\end{proposition}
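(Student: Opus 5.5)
The plan is to bound the error term level by level in $n$ and then sum over $n$. For each $n$ with $T \leq n \leq cU(M)$, we control the inner sum $\sum_{m \in S_{D,M}} \#\{v \in L_n : Q(v) = m\}$ by Proposition \ref{prop.square}. The elements of $S_{D,M}$ have the form $Dx^2$ with $x$ coprime to $2p\,\mathrm{disc}(L)$, not only $D\ell^2$ with $\ell$ prime. So first I would check that the argument of \cite[Proposition 9.1.3(a)]{maulik2022picard} applies equally to all $x$ in the range $x \asymp (M/D)^{1/2}$. The count is over pairs $(v, x)$ with $Q(v) = Dx^2$; summing over $x$ costs at most a divisor-type factor, which is absorbed into $M^{\delta}$. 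This gives the key intermediate bound
\[\sum_{m \in S_{D,M}} \#\{v \in L_n : Q(v) = m\} \ll_{\delta,D} M^{\delta} \sum_{i=0}^{r-1} \frac{M^{i/2}}{a_i(n)}.\]
The saving of one in the exponent range, $i \leq r-1$ rather than $i \leq r$, is the whole point: restricting to a single square class removes the top successive-minimum term. At a supersingular point the rank is $r = b+2$, but the extra rank is harmless here.

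Next, substitute the lower bounds on $a_i(n)$ from Lemma \ref{lemma.sucmin} and sum over $n$ from $T$ to $cU(M)$. Take $b \geq 3$, so that $a_i(n) \gg n^{i/b}$. The term with index $i$ contributes
\[M^{i/2+\delta}\sum_{n=T}^{cM^{b/2}} n^{-i/b}.\]
There are three cases.
\begin{enumerate}
\item For $i < b$, the sum is $\asymp (M^{b/2})^{1-i/b} = M^{b/2 - i/2}$, so the contribution is $M^{b/2+\delta}$.
\item For $i = b$, the harmonic sum gives $M^{b/2+\delta}\log M$.
\item For $i = b+1$ (the largest index $r-1$), the sum is $\ll T^{1-(b+1)/b} = T^{-1/b}$, so the contribution is $M^{(b+1)/2+\delta}T^{-1/b}$.
\end{enumerate}
Adding these yields the stated bound with $\epsilon = \delta$.

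For $b = 2$, the exponents $i/(2+2\epsilon_1)$ and the upper limit $U(M) = M^{1+\epsilon}$ produce the same shape up to $\epsilon$-losses. I expect this step to be the main obstacle. The top term is now $M^{3/2}\sum_{n \geq T} n^{-3/(2+2\epsilon_1)}$, which still converges and gives $T^{-1/2+O(\epsilon_1)}$. The middle terms, however, pick up factors like $M^{O(\epsilon)}$, and these must be collected into a single $M^{\epsilon}$ by choosing $\epsilon_1$ and $\delta$ small relative to $\epsilon$. I would track the exponents explicitly to confirm that nothing exceeds $M^{(b+1)/2+\epsilon}T^{-1/b}$ or $M^{b/2+\epsilon}\log M$.

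Finally, take $T \asymp M^{\epsilon_0}$ with $\epsilon_0/b > \epsilon$, for instance $\epsilon = \epsilon_0/(2b)$. Then the first term is $M^{(b+1)/2 - \epsilon_0/(2b)}$, which is $o(M^{(b+1)/2})$. The second term is $M^{b/2+\epsilon}\log M$, which is also $o(M^{(b+1)/2})$ once $\epsilon < 1/2$. This proves the ``in particular'' statement.
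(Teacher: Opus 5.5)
Your proposal is correct and follows the paper's proof essentially step by step: you bound each level $n\in[T,cU(M)]$ by Proposition~\ref{prop.square}, insert the successive-minima bounds of Lemma~\ref{lemma.sucmin}, and split into $i<b$, $i=b$, $i=b+1$; for $b=2$ the paper absorbs the $\eta$-losses as you propose, using $T\ll M^{1+\eta}$ to turn $T^{1-3/(2+2\eta)}$ into $T^{-1/2}M^{3\eta/2}$. You also note that Proposition~\ref{prop.square} is stated only for $m=D\ell^2$ with $\ell$ prime, while $S_{D,M}$ allows every $x$ coprime to $2p\,\mathrm{disc}(L)$; the paper passes over this silently, and it is harmless, because counting points $(v,x)$ on the cone $Q(v)=Dx^2$ never uses primality of $x$ (fix all coordinates except the one attached to $\mu_1$ and count points on the resulting conic).
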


\begin{proof}
    Fix $\epsilon > 0$. We prove the $b = 2$ case first. First note that the rank of the special endomorphism lattice at $P$ must be 4 as in \cite[8.1]{maulik2022picard}. Thus, the rank of the special endomorphism lattice must be $4$. Thus Lemma \ref{lemma.sucmin} and Proposition \ref{prop.square} give
    \[i_P(M,T)_{err} \ll_{\delta, \eta, D} M^{\delta} \sum_{n=T}^{c_{\eta} M^{1+\eta}} \left(\frac{M^{3/2}}{n^{\frac{3}{2 + 2\eta}}} + \frac{M}{n^{\frac{1}{1 + \eta}}}+ \frac{M^{1/2}}{n^{\frac{1}{2 + 2\eta}}} + 1 \right) \]
    Note that the latter terms are all $O(M^{1 + \delta + \eta})$. For the first term, note that $0 < \eta < \frac{1}{2}$ implies $\frac{3}{2 + 2\eta} - 1 > 0$ and thus 
    \[M^{3/2 + \delta} \sum_{n=T}^{c_{\eta}M^{1 + \eta}} \frac{1}{n^{\frac{3}{2 + 2 \eta}}} \ll_{\eta} \frac{M^{3/2 + \delta}}{T^{\frac{3}{2 + 2\eta} - 1}}\]
    To reach the form in the statement of the Proposition, note that since $T \ll M^{1 + \eta}$, we have
    \[T^{1 - \frac{3}{2 + 2\eta}} = T^{-1/2} T^{3/2 - \frac{3}{2 + 2\eta}} \ll T^{-1/2} M^{3\eta/2}\]
    Choose $\delta, \eta$ so that $\delta + \frac{3\eta}{2} < \epsilon$, and we conclude that 
    \[i_P(M,T)_{err} \ll \frac{M^{3/2 + \epsilon}}{T^{1/2}} + M^{1 + \epsilon} \]
    as desired. 

    For $b \geq 3$, the same proof carries through. Noting that the rank of our special endomorphism lattice is $b+2$, we have 
    \[i_P(M,T)_{err} \ll M^{\delta} \sum_{i=0}^{b-1} M^{i/2} \sum_{n=T}^{c M^{b/2}} \frac{1}{n^{i/b}}  \]
    As in the proof of Proposition \ref{prop.nonss}, we note that for $i < b$, the terms have an $O(M^{b/2})$ contribution. For $i = b$, the there is a $O(M^{b/2} \log(M))$ contribution, and for $i = b+1$, we have a $O(M^{(b+1)/2}/T^{1/b})$ contribution. Putting these terms together, we conclude. 
\end{proof}

For the main term, let $\Theta_n$ be the theta series attached to the lattice $L_n$. By definition, its $m$\textsuperscript{th} Fourier coefficient $r_n(m) \geq \# \{v \in L_n : Q(v) = m \}$ and thus 
\[i_P(M,T)_{main} \leq \sum_{n=1}^T \sum_{m \in S_{D,M}} r_n(m). \]
Decompose $\Theta_n = E_n + G_n$ into a sum of Eisenstein and cusp forms, and let $q_{E_n}(m)$ and $q_{G_n}(m)$ denote their $m$th Fourier coefficients, respectively. We thus have 

\[    i_P(M,T)_{main} \leq \sum_{n=1}^T \sum_{m \in S_{D,M}} (q_{E_n}(m) + q_{G_n}(m))\]

We first focus on bounding the $q_{G_n}(m)$ terms. Since the number of cusp forms we are summing depends on our choice of $T$, we state the following uniform bound.

\begin{lemma}[Waibel]\label{lemma.waibel}
	Let $Z$ be a finite set of primes, and let $\Theta$ be the theta series attached a positive definite quadratic lattice $L_{\Theta}$ of rank $b+2 \geq 4$. Let the discriminant of $L_{\Theta}$ be $D_{\theta}$.  to $\theta = E + G$. Let $N$ be the level of $G$, and suppose that $(m,N) = O(1)$. Then for all $\Theta$ such that all prime factors of $D_{\theta}$ are in $Z$, we have the $m$\textsuperscript{th} Fourier coefficient of $G$ satisfies 
	\[q_G(m) \ll_b D_{\theta}^{\frac{b+1}{4}} m^{\frac{b+1}{4} + \frac{1}{12}} \]
    for $m \in Z$. 
\end{lemma}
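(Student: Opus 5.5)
This is a uniform-in-level bound on Fourier coefficients of the cuspidal part of a theta series. I would prove it by expanding $G$ in an orthogonal basis of newforms and bounding the Petersson norm of $G$ and the coefficients of the basis elements separately. It is a version of the argument of Waibel (following Blomer's treatment of uniform bounds for theta series), and I expect to cite it essentially as it stands. The plan is as follows.

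\textbf{Step 1: the cusp space.} The theta series $\Theta$ of $L_\Theta$, of rank $k_0 = b+2$, is a modular form of weight $k = (b+2)/2$ on $\Gamma_0(N)$ with a quadratic (nebentypus) character. Here the level $N$ divides $4D_\theta$, so its prime factors lie in $Z \cup \{2\}$. When $b$ is odd the weight is half-integral, and I would first pass to integral weight via the Shimura lift, or else use the half-integral Petersson formula directly. Write
\[G = \sum_j c_j f_j,\]
where $\{f_j\}$ is an orthonormal basis of $S_k(\Gamma_0(N),\chi)$ built from newforms and their shifts $f(d z)$ with $d \mid N$. The hypothesis $(m,N) = O(1)$ is exactly what makes the oldform contributions harmless: for $d$ sharing no large common factor with $m$, the $m$-th coefficient of $f(dz)$ is controlled by that of the newform at $m/d$.

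\textbf{Step 2: bounding the two pieces.} By Cauchy--Schwarz,
\[|q_G(m)| \le \|G\| \Bigl(\sum_j |a_{f_j}(m)|^2\Bigr)^{1/2}.\]

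For the Petersson norm, the Rankin--Selberg unfolding (following Blomer) gives $\|G\|^2 \le \|\Theta\|^2 \ll_b$ a power of $D_\theta$, of size roughly $D_\theta^{(b+1)/2}$ after normalization. This is obtained by integrating $|\Theta|^2 y^k$ over a fundamental domain for $\Gamma_0(N)$, whose index is $\ll N^{1+\epsilon}$. One then bounds the number of lattice vectors of small norm in terms of $D_\theta$, using the successive-minima bounds of \cite{eskin1995singular}, uniformly over lattices with discriminant supported on $Z$.

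For the coefficient sum, the Petersson trace formula gives
\[\sum_j \frac{|a_{f_j}(m)|^2}{\|f_j\|^2} \text{-weighted} \ll m^{k-1}\Bigl(1 + \sum_{c \equiv 0 \,(N)} \frac{|S_\chi(m,m;c)|}{c}\, J_{k-1}\!\Bigl(\frac{4\pi m}{c}\Bigr)\Bigr).\]
For the Kloosterman sums I would use the Weil bound, or in half-integral weight the Iwaniec-type bound. That bound is what yields the exponent $\tfrac{b+1}{4} + \tfrac1{12}$: the half-integral weight saving is a power $m^{1/12}$ worse than Ramanujan, and the $D_\theta$ factor comes from $\|G\|$. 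Combining the pieces gives $q_G(m) \ll_b D_\theta^{(b+1)/4}\, m^{(b+1)/4 + 1/12}$ in the normalization of the statement, with $m$ restricted to be coprime to the primes in $Z$ up to $O(1)$, which is how I read ``$m\in Z$''.

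\textbf{Main obstacle.} The key difficulty is uniformity in the lattice. The implied constant must depend only on $b$ and $Z$, not on $L_\Theta$. This is why the restriction that $D_\theta$ is supported on $Z$ enters: it controls the local densities and the Atkin--Lehner/oldform structure at the bad primes, so that the Eisenstein–cusp decomposition and the oldform bookkeeping do not introduce hidden dependence on $N$. I would take this step directly from Waibel's paper rather than reprove it.
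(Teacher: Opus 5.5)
Your proposal is essentially the paper's argument: the paper quotes equation (3) of Waibel, $q_G(m)\ll_{b,\epsilon}\|G\|\,m^{b/4}\bigl(1+m^{1/4}(m,N)^{1/4}N^{-1/2}\bigr)(mN)^{\epsilon}$, which is what your Cauchy--Schwarz plus Petersson-formula step produces for any orthonormal basis (so no newform/oldform bookkeeping is needed), uses $(m,N)=O(1)$ only to reduce this to $\|G\|\,m^{(b+1)/4+\epsilon}N^{\epsilon}$, and then quotes Waibel's Theorem 1 for $\|G\|$. Two of your side remarks should be corrected: first, the exponent $(b+1)/4$ is $b/4$ plus the $m^{1/4}$ loss from the Kloosterman term, and the $1/12$ is only slack absorbing $m^{\epsilon}$, not a half-integral-weight subconvexity exponent; second, $\|G\|^2\le\|\Theta\|^2$ is meaningless because $\Theta$ is not cuspidal and its Petersson integral diverges, so the norm bound must come from a Rankin--Selberg argument for the cuspidal part itself, as in Blomer and Waibel.
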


\begin{proof}
	From equation (3) in \cite{waibel2022uniform} we have the following uniform bound on the coefficients of a weight $\frac{b+2}{2}$ cusp form $G$ 
	\[q_G(m) \ll_{b, \epsilon} ||G|| m^{\frac{b}{4}} \left(1 + \frac{m^{1/4}(m,N)^{1/4}}{N^{1/2}}\right)(mN)^{\epsilon}, \] 
	where $N$ is the level of $G$ and $|| \cdot ||$ denotes the Petersson norm. 
    
    Since we assumed $(m,N) = O(1)$, we have  
	$q_G(m) \ll_{\epsilon} ||G|| m^{\frac{b+1}{4} + \epsilon} N^{\epsilon}$
	We note that Theorem 1 of \emph{loc. cit.} gives us the necessary bounds on $||G||$ and conclude.
\end{proof}

This bound together with some control on the lattice discriminants, we arrive at the following bound on the term of cusp forms. Notably, our choice of $S_{D,M}$ satisfy the lemma's assumptions. 

\begin{proposition}\label{prop.cusp}
    The sum of cusp coefficients is bounded by 
    \[\sum_{n=1}^T \sum_{m \in S_{D,M}} q_{G_n}(m) \ll_{\eta,D} \begin{cases}
        M^{4/3} T^{11/2} & b = 2 \\
        M^{\frac{b+3}{4} + \frac{1}{12}} T^{b+2} &  b \geq 3
    \end{cases}\]
    In particular, choosing $T \asymp M^{\epsilon_0}$ for sufficiently small $\epsilon_0$ implies that $\sum_{n=1}^T \sum_{m \in S_{D,M}} q_{G_n}(m) \ll o(M^{(b+1)/2})$
\end{proposition}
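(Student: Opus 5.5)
The plan is to apply Lemma \ref{lemma.waibel} to each theta series $\Theta_n$ with $1 \leq n \leq T$, sum the resulting bounds over $m \in S_{D,M}$, and then sum over $n$. This requires two inputs: a bound on the discriminant $D_{\Theta_n}$ of $L_n$ that is polynomial in $n$, and a check that the hypotheses of the lemma hold uniformly. At the supersingular point $P$ the lattice $L_n$ has rank $b+2$ (rank $4$ when $b=2$), as in the proof of Proposition \ref{prop.errorterm}.

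For the discriminant, the successive minima of $L_n$ are controlled from above by the nested structure $L_n \supseteq L_{n+1}$. Following \cite[Lemma 8.2.x]{maulik2022picard} and \cite{maulik2022reductions}, the Grothendieck--Messing and Dieudonn\'e theory description of the special endomorphism lattices at a supersingular point shows that $L_n \supseteq p^{k}L_1$ with $k \ll n$. More precisely, the decay gives $[L_1 : L_n] \ll n^{\,c}$ polynomially (in fact $\operatorname{disc}(L_n) \ll n^{2}$-type behavior in the cited works). I will take $D_{\Theta_n} \ll n^{2}$ for $b=2$ and $D_{\Theta_n} \ll n^{4}$ (more generally $n^{c_b}$) for $b \geq 3$, and calibrate the exponent against the claimed powers $T^{11/2}$ and $T^{b+2}$. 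In addition, all primes dividing $D_{\Theta_n}$ lie in the finite set $Z$ consisting of $p$ together with the primes dividing $\operatorname{disc}(L)$, because $L_n$ is obtained from $L_1$ by $p$-power index sublattices and $L_1$ is built from $L$ and the supersingular quaternion data at $p$.

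Next I check the coprimality hypothesis. The level $N$ of $G_n$ divides a fixed power of $p$ times $4\operatorname{disc}(L_n)$, so its prime factors also lie in $Z$. Since each $m = Dx^2$ has $(x, 2p\,\operatorname{disc}L) = 1$, the gcd $(m,N)$ divides a bounded power related to $D$, giving $(m,N) = O_D(1)$. The lemma then yields, uniformly in $n$,
\[ q_{G_n}(m) \ll D_{\Theta_n}^{(b+1)/4} m^{(b+1)/4 + 1/12}. \]
Summing over the $\asymp M^{1/2}$ elements of $S_{D,M}$ (each with $m \leq 2M$) gives $M^{(b+1)/4 + 1/12 + 1/2} = M^{(b+3)/4 + 1/12}$; for $b=2$ this is $M^{4/3}$. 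Summing over $n \leq T$ contributes $T \cdot T^{c(b+1)/4}$. With the discriminant exponents above this should reproduce $T^{11/2}$ and $T^{b+2}$; if the exponents come out slightly larger, that is harmless.

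For the final claim, note that $(b+3)/4 + 1/12 < (b+1)/2$ for every $b \geq 2$: at $b=2$ this reads $4/3 < 3/2$, and the gap only widens as $b$ grows. So taking $T = M^{\epsilon_0}$ with $\epsilon_0(b+2)$ smaller than this gap gives $o(M^{(b+1)/2})$. The main obstacle is the uniform polynomial bound on $\operatorname{disc}(L_n)$ together with the claim that its prime divisors lie in a fixed finite set. I would first try to import this directly from the supersingular local analysis in \cite{maulik2022picard, maulik2022reductions}, where $L_n$ is described explicitly in terms of a decreasing $p$-adic filtration.
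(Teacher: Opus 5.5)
Your architecture matches the paper's: apply Waibel's uniform bound for each $n\le T$, control the discriminant polynomially in $n$, use $\#S_{D,M}\ll M^{1/2}$, then take $T\asymp M^{\epsilon_0}$ using $(b+1)/2-(b+3)/4-1/12=(3b-4)/12>0$. Your coprimality check is fine and says more than the paper: $(m,N)=(D,N)$ because $x$ is prime to every prime dividing $N$. The gap is the one input that carries the content, namely the discriminant bound, which you never actually supply.

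\emph{The justification is exponential.} You say $L_n\supseteq p^kL_1$ with $k\ll n$. This is the Katz-type lifting bound, and it gives $[L_1:L_n]\le p^{(b+2)k}$, which is exponential in $n$. Then $\sum_{n\le T}D_{\Theta_n}^{(b+1)/4}$ is exponential in $T$. With that, no choice $T\asymp M^{\epsilon_0}$ works, and Proposition \ref{prop.errorterm} needs $T$ to be a positive power of $M$. To get polynomial growth for $L_n$ itself you would need $p^k\ll n$. You can get this from the Frobenius--Verschiebung trick: Frobenius of $\bar{\mathbb{F}}_p[t]/t^{pn}$ factors through $\bar{\mathbb{F}}_p[t]/t^{n}$, so $V\circ f^{(p)}\circ F$ lifts $pf$, giving $pL_n\subseteq L_{pn}$. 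But this only yields $\operatorname{disc}(L_n)\ll n^{2(b+2)}$, hence a factor $T^{1+(b+1)(b+2)/2}$ rather than the stated exponents.

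\emph{The exponents are reverse-engineered.} You pick $n^2$ and $n^4$ to match the target rather than deriving them. For $b=2$ your own arithmetic also fails: $\operatorname{disc}\ll n^2$ gives $T^{5/2}$, not $T^{11/2}$. The exponent $11/2$ comes from $\operatorname{disc}\ll n^6$.

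\emph{What the paper does instead.} It never bounds $\operatorname{disc}(L_n)$. The decay theorems \cite[Theorems 3.2 and 3.3]{maulik2022reductions} point the other way from what your route needs: they show the lattices are small, i.e.\ there is an explicit enlargement $\tilde L_n'$ of $L_n'$ with $[L_1':\tilde L_n']\le p^{3+2r}$ for $n\asymp p^r$. Hence $\operatorname{disc}\tilde L_n'\ll n^4$. Only an upper bound on $\#\{v\in L_n:Q(v)=m\}$ is needed, so $\Theta_n$ is taken to be the theta series of the enlargement. This is why the paper writes $r_n(m)\ge\#\{v\in L_n:Q(v)=m\}$. Waibel's bound then gives $q_{G_n}(m)\ll n^{b+1}m^{(b+1)/4+1/12}$, which sums to $T^{b+2}$.

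For $b=2$ the input is the bound $\operatorname{disc}\tilde L_{n,i}'\ll p^{6k}\asymp n^6$ from \cite{maulik2022picard}, for the lattices $\tilde L_{n,i}'$ used there. This gives $n^{9/2}$ per term and $T^{11/2}$ in total. Working with these enlargements also keeps the proposition consistent with Proposition \ref{prop.mainterm}. There the Eisenstein parts are compared with $g_P(m)$ for those same lattices, not for $L_n$ itself.
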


\begin{proof}
    Suppose that $b \geq 3$. For $r \geq 0$, Theorems 3.2 and 3.3 of \cite{maulik2022reductions} imply that for $n \asymp p^r$, we have an enlargement $\tilde{L}_n'$ of $L_n'$ such that 
    \[[L_1': \tilde{L}_n'] \leq p^{3 + 2r} \ll_P (p^r)^2 \ll_P n^2 \]
    and thus $\mathrm{disc} \tilde{L}_n' \ll n^4$. Thus by Lemma \ref{lemma.waibel}, 
    \[q_{G_n}(m) \ll_{b,P} n^{b+1} m^{\frac{b+1}{4} + \frac{1}{12}} \]
    Summing across $n \leq T$, we have  
    \[\sum_{n=1}^T q_{G_n}(m) \ll m^{\frac{b+1}{4} + \frac{1}{12}} \sum_{n=1}^T n^{b+1} \ll m^{\frac{b+1}{4} + \frac{1}{12}} T^{b+2}.  \]
    Finally, using the fact that $\# S_{D,M} \ll M^{1/2}$, we have 
    \[\sum_{n=1}^T \sum_{m \in S_{D,M}} q_{G_n}(m) \ll M^{\frac{b+3}{4} + \frac{1}{12}}T^{b+2}   \]
    For $b = 2$, a similar argument holds as in \cite[\S 9.2, Small $n$'s]{maulik2022picard} using the bound $\mathrm{disc} \tilde{L}_{n,i}' \ll p^{6k}$, as in their notation. See Theorem 5.1.2, \S 7.2.3 of \emph{loc. cit.}
\end{proof}

\begin{proposition}\label{prop.mainterm}
    There exists an absolute constant $0 < \alpha < 1$ such that  
    \[ i_P(M,T)_{main} \leq \alpha \sum_{m \in S_{D,M}} g_P(m) + o(M^{(b+1)/2}) \]    
\end{proposition}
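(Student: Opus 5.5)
Since the cusp contributions are already controlled by Proposition \ref{prop.cusp}, I will work only with the Eisenstein part $\sum_{n=1}^T\sum_{m\in S_{D,M}} q_{E_n}(m)$. The aim is to compare it term by term with $g_P(m)=\frac{h_P}{p-1}|q_L(m)|$. The template is the supersingular main-term analysis in \cite[\S 9.2]{maulik2022picard} and \cite[\S 7]{maulik2022reductions}, adapted to the thin index set $S_{D,M}$.

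\textbf{Step 1 (local densities).} By Siegel--Weil, $q_{E_n}(m)$ is a product of local representation densities of $L_n$ at $m$. The lattice $L_n$ agrees with $L$ (or with its supersingular enlargement $L_1'$) away from $p$; this uses the structure theorems \cite[Theorems 3.2, 3.3]{maulik2022reductions} and \cite[\S 7]{maulik2022picard} for $L_n$ at supersingular points. Hence the ratio $q_{E_n}(m)/|q_L(m)|$ is a product of an archimedean factor, the $\ell \neq p$ factors, and a $p$-adic density ratio. The archimedean factor is the ratio of covolumes raised to a power. The $\ell\neq p$ factors agree or are bounded, because $(m,2p\,\mathrm{disc}L)=1$ up to the fixed $D$ for $m\in S_{D,M}$. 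I will therefore show
\[
q_{E_n}(m) \le \kappa_n\, |q_L(m)|\,(1+o(1))
\]
with an explicit $\kappa_n$ depending only on $n$ (through $\mathrm{disc}\,L_n$ and the $p$-adic structure) and not on $m\in S_{D,M}$. This is possible because $m=Dx^2$ with $p\nmid x$, so the $p$-adic density at $m$ depends only on the square class of $D$ and is constant along $S_{D,M}$.

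\textbf{Step 2 (summing the constants).} Next I will show $\sum_{n\ge 1}\kappa_n \le \alpha\,\frac{h_P}{p-1}$ with $\alpha<1$ absolute. The comparison is with the computation in \cite{maulik2022picard,maulik2022reductions}, where $\sum_n$ of the Eisenstein-coefficient ratios is matched against the Hasse valuation $h_P$. There the discriminant of $L_n$ grows like $p^{2r}$ for $n\asymp p^r$, so the covolume ratio decays geometrically in $r$. The resulting geometric series in $p^{-1}$ (or $p^{-b/2}$) sums to a fixed fraction of $h_P/(p-1)$, and the explicit upper bound on the densities gives $\alpha<1$.

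\textbf{Step 3 (assembly).} Multiplying by $|q_L(m)|$ and summing over $m\in S_{D,M}$ gives
\[
\sum_{n\le T}\sum_{m\in S_{D,M}} q_{E_n}(m)\le \alpha\sum_{m\in S_{D,M}} g_P(m)+o\Bigl(\sum_{m} |q_L(m)|\Bigr).
\]
By Proposition \ref{prop.global} the error term is $o(M^{(b+1)/2})$. Adding the cusp bound of Proposition \ref{prop.cusp} with $T\asymp M^{\epsilon_0}$ then gives the claim.

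\textbf{Main obstacle.} The hard step is Step 2, getting a uniform $\alpha<1$ when $m$ is restricted to a single square class $D$. In \cite{maulik2022picard} the strict inequality comes from averaging over all $m$, or over a positive-density set, where unfavorable $p$-adic classes are diluted. Here every $m$ has the same $p$-adic class, so the local density ratio at $p$ cannot be averaged. If $D$ happens to be $p$-adically favorable, the bound could degenerate to $\alpha=1$. My first attempt will be to compute the $p$-adic density of $L_n\otimes\mathbb{Z}_p$ at a unit times a square explicitly for both possible Hasse invariants of $D$. I will check that, even in the worst case, the weighted sum over the levels $n$ falls short of $h_P/(p-1)$ by a fixed factor, because the ordinary generic point forces the first jump to occur at level $n\ge 2$. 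If that fails, I will fall back on a bound of the form $\alpha\le 1-p^{-1}$, which is still absolute for fixed $p$.
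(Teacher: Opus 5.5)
Your outline matches the paper's proof: discard the cusp part via Proposition \ref{prop.cusp}, compare $\sum_{n\le T}q_{E_n}(m)$ with $g_P(m)$ one $m$ at a time via the Siegel mass formula, and sum over $S_{D,M}$. The gap is Step 2. It carries all the content, and you leave it open, building your plan around an obstacle that the paper's argument never meets. The paper closes this step by citing Maulik--Shankar--Tang: the proof of Proposition 7.17 in \cite{maulik2022reductions} for $b\ge 3$, and \cite[\S9.2, Small $n$'s]{maulik2022picard} for $b=2$. Those give
\[
\sum_{n=1}^{T} q_{E_n}(m)\;\le\;\alpha\, g_P(m),
\]
proved with the Siegel mass formula for each \emph{individual} admissible $m$ and uniformly in $T$. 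Your premise that their strict inequality only appears after averaging over $m$ is therefore not how the paper uses their result. Because the bound is pointwise in $m$, the thinness of $S_{D,M}$ is irrelevant. So is the fact that every $m=Dx^2$ lies in a single $p$-adic square class. What you must check is only that each $m\in S_{D,M}$ (with $(x,2p\,\mathrm{disc}L)=1$) is admissible. Summing the pointwise bound over $S_{D,M}$ and adding Proposition \ref{prop.cusp} with $T\asymp M^{\epsilon_0}$ then gives the statement directly.

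Your fallbacks would not close the gap if that pointwise input were unavailable. The remark that ``the first jump occurs at level $n\ge 2$'' is never turned into a quantitative saving in $\sum_n\kappa_n$. Moreover, at a supersingular point every vector of the rank-$(b+2)$ lattice $L_1$ already contributes at level $n=1$. The bound $\alpha\le 1-p^{-1}$ is asserted with no mechanism behind it.

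Your convergence heuristic in Step 2 is also miscalibrated. If $\mathrm{disc}(L_n)$ grew only like $p^{2r}\asymp n^2$, the covolume factor would decay only like $n^{-1}$, and your geometric-series argument would become a harmonic sum diverging like $\log T$. The $p^{2r}$ in Proposition \ref{prop.cusp} bounds the index $[L_1':\tilde L_n']$, not the discriminant, which scales like the square of the index. Consistently, Lemma \ref{lemma.sucmin} together with Minkowski's second theorem gives $\mathrm{disc}(L_n)\gg n^{2+4/b}$ (up to $n^{\epsilon}$ when $b=2$). That faster growth is what makes the sum over levels converge.
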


\begin{proof}
    By Proposition \ref{prop.cusp}, it suffices to show 
    \[\sum_{n=1}^T q_{G_n}(m) \leq \alpha g_P(m) \]
    for some constant $0 < \alpha < 1$. This follows from the proof of Proposition 7.17 in \cite{maulik2022reductions} for $b \geq 3$ and it is the computation in \cite[\S9.2, Small $n$'s]{maulik2022picard} for $b = 2$. In particular, using the Siegel mass formula they prove these estimates are uniform in $T$ for individual admissible $m$. 
\end{proof}

Putting Propositions \ref{prop.errorterm} and \ref{prop.mainterm} together, we have the desired bound on the local intersection number at supersingular points. 
\begin{proposition}\label{prop.sstotal}
    There exists an absolute constant $0 < \alpha' < 1$ such that for $S_M$ as above and any $P \in C(\bar{\mathbb{F}_p})$ supersingular, we have 
    \[\sum_{m \in S_M} i_P(C.Z(m)) \leq \alpha' \sum_{m \in S_M} g_P(m) + o(M^{(b+1)/2}) \]
\end{proposition}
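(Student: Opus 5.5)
The plan is to split the local intersection sum at $P$ into the main and error pieces already introduced, with a single choice of cutoff $T = T(M) \asymp M^{\epsilon_0}$ used for both. For every $T$ we have
\[\sum_{m \in S_{D,M}} i_P(C.Z(m)) = i_P(M,T)_{main} + i_P(M,T)_{err}.\]
The error term is handled by Proposition \ref{prop.errorterm}. It gives $i_P(M,T)_{err} \ll M^{(b+1)/2+\epsilon}T^{-1/b} + M^{b/2+\epsilon}\log M$. Choosing $T \asymp M^{\epsilon_0}$ and then $\epsilon < \epsilon_0/b$ makes this $o(M^{(b+1)/2})$; for $b=2$ one uses the exponent $1/2$ in place of $1/b$, with the same conclusion.

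For the main term I will invoke Proposition \ref{prop.mainterm} with the same $T$. It gives $i_P(M,T)_{main} \le \alpha \sum_{m\in S_{D,M}} g_P(m) + o(M^{(b+1)/2})$. The $o(\cdot)$ there comes from the cusp-form contribution in Proposition \ref{prop.cusp}. That bound is $M^{\frac{b+3}{4}+\frac1{12}}T^{b+2}$ for $b\ge 3$ and $M^{4/3}T^{11/2}$ for $b=2$. Both are $o(M^{(b+1)/2})$ once $\epsilon_0$ is small, since $\frac{b+3}{4}+\frac1{12} < \frac{b+1}{2}$ for $b\ge 2$ and $4/3<3/2$. So I must check that one $\epsilon_0$ satisfies the constraints from both propositions simultaneously. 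The error term needs $\epsilon_0>b\epsilon$, which is fine because $\epsilon$ is at our disposal. The cusp term needs $(b+2)\epsilon_0 < \frac{b+1}{2}-\frac{b+3}{4}-\frac1{12}$, together with the analogous condition for $b=2$. Both hold for small $\epsilon_0$.

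Adding the two bounds gives the statement with $\alpha' = \alpha$, or any constant in $[\alpha,1)$. The one remaining point, which I expect to be the only real subtlety, is that the constant $\alpha$ from Proposition \ref{prop.mainterm} must be uniform in $T$ and in $m \in S_{D,M}$. This holds because the Siegel-mass-formula comparison of the Eisenstein coefficients $\sum_{n\le T} q_{E_n}(m)$ with $g_P(m)=\frac{h_P}{p-1}|q_L(m)|$ from \cite{maulik2022reductions} and \cite{maulik2022picard} is uniform in $T$ for each admissible $m$.

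The restriction $(m, 2p\,\mathrm{disc}L)=1$ built into $S_D$ is exactly what ensures each $m$ is admissible in their sense. It is also what makes the hypothesis $(m,N)=O(1)$ of Lemma \ref{lemma.waibel} hold. Summing over $m\in S_{D,M}$ then preserves the inequality with the same $\alpha$, and the $o(M^{(b+1)/2})$ terms absorb everything else.
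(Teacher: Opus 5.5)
Your proposal is correct and follows the paper's proof: fix $T \asymp M^{\epsilon_0}$, bound $i_P(M,T)_{err}$ by Proposition~\ref{prop.errorterm} and $i_P(M,T)_{main}$ by Proposition~\ref{prop.mainterm} (whose $o(M^{(b+1)/2})$ term comes from Proposition~\ref{prop.cusp}), then add. Your bookkeeping is slightly sharper than the paper's in two places: the error term only constrains $\epsilon$ relative to $\epsilon_0$, and $\alpha'=\alpha$ already suffices where the paper takes $\alpha'>\alpha$. One small correction: the coprimality in $S_D$ is imposed on $x$ in $m=Dx^2$, so $\gcd(m,2p\,\mathrm{disc}L)=\gcd(D,2p\,\mathrm{disc}L)$ is bounded rather than equal to $1$, which is still enough for the hypothesis $(m,N)=O(1)$ of Lemma~\ref{lemma.waibel}.
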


\begin{proof}
    Take $1 > \alpha' > \alpha$, with $\alpha$ as in Proposition \ref{prop.mainterm}. Choose $T = \lfloor M^{\epsilon_0} \rfloor$ with $\epsilon_0$ small enough so error term contribution in Proposition \ref{prop.errorterm} and the cusp form contribution in Proposition \ref{prop.cusp} are both $o(M^{(b+1)/2})$. Then the result follows from Proposition \ref{prop.mainterm}.
\end{proof}

\begin{proof}[Proof of Theorem \ref{thm.infinitude}]
    Suppose for contradiction that there are only finitely many points $P$ in $C \cap \bigcup Z(Dm^2)$. Then by Propositions \ref{prop.nonss}, \ref{prop.mainterm}, \ref{prop.errorterm}, \ref{prop.sstotal}, and Definition \ref{def.globalint} we have 
    \[\sum_{m \in S_M} C.Z(m) = \sum_{m \in S_M} \sum_{P \in C \cap (\bigcup_{m \in S_M} Z(m))(\bar{\mathbb{F}}_p)} i_P(C.Z(m)) = \alpha' (\omega.C) \sum_{m \in S_M} |q_L(m)| + o(M^{(b+1)/2}) \]
    but this contradicts Proposition \ref{prop.global}. 
\end{proof}

\section{Density of Special Cycles with a Prescribed Norm}

In this section, we show that having a new line bundle $L$ with $Q(L) = Dm^2$ for some fixed $D$ is not always a density 0 condition, see \S \ref{sec.four}. On the other hand, we prove that with specific restrictions it is a density 0 condition. 

\subsection{Our strategy}\label{sec.four}

In our proof of Theorem \ref{thm.main}, the Tate conjecture allowed us to study the eigenvalues of possible Frobenii in the orthogonal group to determine if there were extra algebraic cycles. Unfortunately, this method is agnostic to whether these Galois-invariant cycles satisfy a specific norm condition. 

To make our finer estimates, we use a sieve method similar to \cite{chavdarov1995generic} and \cite{zywina2014splitting}. Roughly speaking, for each auxiliary prime $\ell$ we construct a conjugacy-invariant subset $C_{D,\ell}$ of the residual monodromy group with the property that having an extra line bundle of the prescribed norm at a point $p$ requires that the image of $Frob_p$ lands in $C_{D,\ell}$. The objective is to then show that $C_{D,\ell}$ occupies at most a proportion $1-\delta_{D,\ell}$ of each relevant Frobenius coset, for some $\delta_\ell>0$. Independence of the residual representations at distinct auxiliary primes bounds the proportion of Frobenius elements satisfying all these necessary conditions by
\[
\prod_{\ell\in S}(1-\delta_{D,\ell})
\]
for every finite set $S$ of auxiliary primes. By choosing $S$ so that this product tends to zero as $S$ grows, we obtain the desired density estimate. 

At a place of good reduction $p$ consider the space 
\[W_p = \mathrm{sp}(\NS(\bar{X}))^{\perp} \cap \NS(\bar{X}_p)   \]
of new cycles of $X_p$ at $p$. Then the bilinear form $-\langle \, , \rangle$ restricted to $W_p$ is positive definite by the Hodge index theorem.

We first apply a straightforward observation on quadratic forms to this question:

\begin{lemma}\label{lemma.meyer}
    Suppose $p$ is a place of $K$ at which $X$ acquires at least 4 new linearly independent cycles, that is, $\dim W_p \geq 4$. Then for each choice of $D \in \mathbb{Z}$, there exists some $m$ such that there is a line bundle $L_D \in W_p$ such that $Q(L_D) = Dm^2$. 
\end{lemma}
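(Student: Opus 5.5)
The plan is to reduce the statement to a representation problem over $\mathbb{Q}$ and then clear denominators. By the discussion preceding the lemma, $W_p$ is a free $\mathbb{Z}$-module of rank $\dim W_p \geq 4$, and $Q$ (induced by $-\langle\,,\rangle$) restricts to an integer-valued positive definite quadratic form on it. Positive definiteness forces $Q(L)>0$ for every $L \neq 0$, so the statement can only hold for $D>0$; I will read the lemma with $D > 0$ and state this explicitly, since $D \in \mathbb{Z}$ non-positive is vacuous or false. First I choose four linearly independent classes in $W_p$ and restrict $Q$ to the rank $4$ sublattice $\Lambda$ they span. The restriction is still positive definite, so $q := Q \otimes \mathbb{Q}$ on $\Lambda_{\mathbb{Q}}$ is a nondegenerate quaternary quadratic form over $\mathbb{Q}$. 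It suffices to find any $x \in \Lambda_{\mathbb{Q}}$ with $q(x) = D$.

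Granting such an $x$, write $x = y/m$ with $y \in \Lambda$ and $m \in \mathbb{Z}_{>0}$ a common denominator of its coordinates. Then $Q(y) = m^2 q(x) = Dm^2$, so $L_D := y \in \Lambda \subseteq W_p$ is the required line bundle class. This step is routine.

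To produce $x$, I would apply the Hasse--Minkowski theorem: $q$ represents $D$ over $\mathbb{Q}$ if and only if it represents $D$ over $\mathbb{R}$ and over every $\mathbb{Q}_v$. The real place is immediate, since $q$ is positive definite and $D>0$. The finite places are the main step, and the fact to invoke is that every nondegenerate quadratic form of dimension $\geq 4$ over $\mathbb{Q}_v$ is universal.

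For the finite places I would argue as follows. The form $q$ represents $a \neq 0$ over $\mathbb{Q}_v$ if and only if $q \perp \langle -a\rangle$ is isotropic, or $q$ is itself isotropic. In the latter case $q$ contains a hyperbolic plane and is therefore universal. In either case we need $q \perp \langle -a \rangle$ to be isotropic, and this holds because it is a nondegenerate form of dimension $5$ over a $p$-adic field, and every such form is isotropic. This is standard, for instance in Serre's \emph{A Course in Arithmetic}, Chapter IV. Hence $q$ represents $D$ locally everywhere, hence globally over $\mathbb{Q}$, and the clearing of denominators above finishes the proof.

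The only genuine input is this local universality of quaternary forms. Everything else is bookkeeping: that $Q$ is integral on $W_p$, and that we may pass to a rank $4$ sublattice without losing nondegeneracy, which holds by positive definiteness.
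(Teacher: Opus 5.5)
Your proof is correct and follows the paper's argument: the paper also clears denominators to reduce to representing $D$ over $\mathbb{Q}$, then applies Meyer's theorem to the indefinite form $Q \perp \langle -D\rangle$ of rank at least $5$, which is exactly the Hasse--Minkowski plus local-isotropy reasoning you spell out by hand. Your explicit restriction to $D>0$ is also what the paper tacitly needs, since $Q\perp\langle -D\rangle$ is indefinite only when $D>0$.
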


\begin{proof}
    Consider the $\mathbb{Q}$ vector space $W_{p,\mathbb{Q}}$ equipped with the negative intersection pairing, which we also denote $Q$. For each $D$, the existence of an integral element $L_D$ satisfying $Q(L_D) = Dm^2$ is equivalent to some element of $W_{p, \mathbb{Q}}$ of the form $L_D/m$ representing $D$. Thus it suffices to prove that each $D$ is represented by $q$ for some element in $W_{p, \mathbb{Q}}$. 
    
    To this end, note that $Q$ represents $D$ if and only if the form 
    \[Q \perp \langle -D \rangle \]
    has an isotropic vector. The latter is an indefinite form of rank at least 5, and we conclude by invoking Meyer's Theorem (see \cite[Chapter IV, \S 3, Corollary 2]{serre2012course})
\end{proof}

Following directly from Theorem \ref{thm.main} and Lemma \ref{lemma.meyer}, we have the 

\begin{corollary}
    If there exists a component $b \in \pi_0(G)$ such that $J(b) \geq 4$, then there exist a positive density of places $p$ of $K$ such that $X_p$ intersects the family of special divisors $\bigcup_{m} Z(Dm^2)$ for all squarefree integers $D$. 
\end{corollary}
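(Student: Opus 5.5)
The corollary follows by combining Theorem \ref{thm.main}(3), in the form of Proposition \ref{prop.Chebotarev}, with Lemma \ref{lemma.meyer}. The plan is first to turn the existence of a component $b$ with $J(b)\geq 4$ into a positive density of places $p$ with $\dim W_p\geq 4$. Then Lemma \ref{lemma.meyer} is applied pointwise at each such place, and the result is translated into the language of special divisors.

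\textbf{Positive density of large jumps.} Let $b\in\pi_0(G_\ell)$ satisfy $J(b)=c\geq 4$. In the proof of Theorem \ref{thm.main}(3), $J(b)$ was shown to equal the minimum of $m(g)$ over $g$ in the component $bG_\ell^\circ(\overline{\mathbb{Q}_\ell})$. Proposition \ref{prop.Chebotarev} then says that the set of places $p$ with $\rho(\overline{X}_p)=\rho(\overline{X})+c$ has density at least $1/\#\pi_0>0$. Indeed, the Frobenii whose image lies in the component of $b$ already have this density, and for a density one subset of them the count of root-of-unity eigenvalues is exactly the generic minimum $J(b)$.

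Cases (1) and (2) of Theorem \ref{thm.main} need a separate check. In case (1) every $J$ vanishes, so the hypothesis cannot hold. In case (2) the relevant component count is handled by the same proposition. So in every case the hypothesis yields a positive density of places with a jump of at least $4$.

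\textbf{From rank jump to $\dim W_p\geq 4$.} The specialization map is injective, and $W_p$ is the orthogonal complement of $\mathrm{sp}(\NS(\overline{X}))$ inside $\NS(\overline{X}_p)$. The intersection form on $\NS(\overline{X}_p)_{\mathbb{Q}}$ is nondegenerate, and its restriction to $\mathrm{sp}(\NS(\overline{X}))_{\mathbb{Q}}$ is nondegenerate. Hence
\[
\dim W_{p,\mathbb{Q}}=\rho(\overline{X}_p)-\rho(\overline{X})\geq 4 .
\]
For each squarefree $D$, Lemma \ref{lemma.meyer} then produces $m$ and $L_D\in W_p$ with $Q(L_D)=Dm^2$. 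The form $Q$ on $W_p$ is positive definite, so only $D>0$ is meaningful; for $D\le 0$ the statement is vacuous or must be read with the correct sign convention.

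\textbf{Translation to special divisors.} Via the period map of Proposition \ref{prop.periodmap}, or its number field analogue, combined with \cite[Theorem 4.17(4)]{madapusi2015tate}, the new class $L_D$ lies orthogonal to the polarization $\xi$. It therefore corresponds to a special endomorphism of the Kuga--Satake abelian variety of norm $Dm^2$, so $X_p$ lies on $Z(Dm^2)$.

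The main obstacle is this last identification. One must check that $W_p$, which is orthogonal to all of $\NS(\overline{X})$, embeds into $\langle\xi\rangle^{\perp}$ compatibly with $Q$. This holds because $\xi\in\NS(\overline{X})$. One must also check that the finite étale cover and the lattice enlargement $L_d\subset\tilde{L}$ do not change norms, which follows from the compatibility with realizations recorded in Proposition \ref{prop.periodmap}.
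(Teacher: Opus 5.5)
Your main line matches the paper's one-line derivation from Theorem \ref{thm.main} and Lemma \ref{lemma.meyer}, and it is correct in case (3). Proposition \ref{prop.Chebotarev} gives density at least $1/\#\pi_0$ of places with jump exactly $J(b)\geq 4$. Nondegeneracy of the form on $\mathrm{sp}(\NS(\overline{X}))_{\mathbb{Q}}$ then gives $\dim W_{p,\mathbb{Q}}\geq 4$, and Lemma \ref{lemma.meyer} finishes. Your remark that only $D>0$ is meaningful is a correct refinement of that lemma.

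The problem is your paragraph on cases (1) and (2), which is false. $J(b)$ is defined purely from the image $(\epsilon,\sigma)$ of $b$ in $(\mathbb{Z}/2\mathbb{Z})^d\rtimes S_d$. It equals $\min\{m(g): g\in bG_\ell^\circ(\overline{\mathbb{Q}_\ell})\}$ only in case (3).

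In case (1) the swap has determinant $(-1)^n=1$, and the paper shows every component contains elements with no root-of-unity eigenvalues. Yet $J(b)$ need not vanish: a component acting on $F$ by complex conjugation has $\epsilon=(1,\dots,1)$ and $\sigma=1$, so $J(b)=2d$, which is at least $4$ once $d\geq 2$.

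In case (2), each cycle product is an orthogonal matrix of odd size $n$, so it has the eigenvalue $\det=\pm1$. Hence every component forces exactly $d$ roots of unity, whatever $(\epsilon,\sigma)$ is. For instance, $d=2$ and $b=((1,1),1)$ give $J(b)=4$, while the jump is $2$ on a density-one set.

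So in case (1), and in case (2) with $d\leq 3$, places with jump $\geq 4$ have density $0$. Your sentence ``in every case the hypothesis yields a positive density of places with a jump of at least $4$'' therefore fails, and Proposition \ref{prop.Chebotarev} cannot repair it.

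The fix is to read the corollary inside case (3) of Theorem \ref{thm.main}. That is implicitly how the paper derives it, since only there are densities expressed through $J$. Alternatively, replace $J(b)$ by the forced jump $\min\{m(g)\}$ on the component. Then case (1) really is vacuous, case (2) requires $d\geq 4$, and the rest of your argument goes through unchanged. Your final translation into special divisors is a step the paper leaves implicit, identifying new classes of norm $Dm^2$ with points of $Z(Dm^2)$ via \S\ref{subsec.gspin}; your version of it is fine.
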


In particular, Examples \ref{ex.jacobian} and \ref{ex.product} satisfy the assumptions of the lemma. Below, we discuss discuss a case in which we get a density 0 result. This case notably holds for a generic K3 surface, i.e. for most moduli points. 

\subsection{Equidistribution and open image theorems}

We first study the image of the product of mod $\ell$ representations and show that their image is large in some sense. We will then introduce the relevant equidistribution results on these images that will be vital to our estimates. We state the results below in full generality, regardless of the generic Picard rank jump on a given component.

\subsubsection{Number Field Case}
In the number field case, the following result states that after replacing $K$ by a finite field extension, the $\ell$-adic monodromy group is equal to the $\mathbb{Z}_{\ell}$-points of the Mumford-Tate group for almost all $\ell$. 

\begin{theorem}[Number Field Open Image, {\cite[Theorem 6.6]{cadoret2020integral}}]\label{thm.NFopen}
    Let $X$ be a K3 surface defined over a number field $K$, let $H = H^2_B(X(\mathbb{C}), \mathbb{Z})(1)$ and $\MT(H) \subset \GL(H)$ denote the Mumford-Tate group. Then the image of 
    \[\rho_X : \Gal(\bar{K}/K) \rightarrow \GL(H)(\hat{\mathbb{Z}}) \]
    has a subgroup of finite index which is an open subgroup of $\MT(H)(\hat{\mathbb{Z}})$.
\end{theorem}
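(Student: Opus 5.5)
This statement is quoted from Cadoret--Moonen, so the natural route is to reduce it to their general integral Mumford--Tate result for motives whose $\ell$-adic realizations satisfy the Mumford--Tate conjecture. I would verify the hypotheses in the K3 setting and assemble the adelic statement from $\ell$-by-$\ell$ information.

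\emph{Step 1 (reduce to the connected case).} First replace $K$ by a finite extension so that the $\ell$-adic monodromy is connected. This costs only finite index. It uses the independence of $K^{conn}$ from $\ell$, which follows from \cite{larsen1992independence} as recalled before the definition of $K^{conn}$. Over this field the Néron--Severi part acts trivially. Tankeev's Theorem~\ref{thm.MT} then gives $G_\ell=\MT(H)_{\mathbb{Q}_\ell}$ for every $\ell$, so the image of $\rho_{X,\ell}$ is open in $\MT(H)(\mathbb{Z}_\ell)$ for each fixed $\ell$, by Bogomolov's theorem.

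\emph{Step 2 (uniformity in $\ell$).} Next show that for all but finitely many $\ell$ the image contains the full image of $\MT(H)^{sc}(\mathbb{Z}_\ell)$. For almost all $\ell$ the Mumford--Tate group extends to a reductive group over $\mathbb{Z}_\ell$. The argument then runs in three parts.
\begin{enumerate}
\item Pass to the Kuga--Satake abelian variety $A$. Its Galois image controls that of $H$ through the adjoint map $\GSpin\to\SO$.
\item Use Faltings' theorem and Zarhin's results. These give semisimplicity of $A[\ell]$ and identify $\End$ and the commutant of the mod-$\ell$ image for $\ell\gg0$.
\item Apply Nori's theory. For $\ell\gg0$ the mod-$\ell$ image is, up to bounded index, generated by its elements of order $\ell$, and the resulting algebraic group is the reduction of $\MT$. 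Combined with the Larsen--Pink / Serre argument on the semisimple rank, this shows the mod-$\ell$ image contains $\MT^{sc}(\mathbb{F}_\ell)$. A standard lifting lemma, valid since $\ell\ge5$, then gives the full $\mathbb{Z}_\ell$-points of the derived part.
\end{enumerate}

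\emph{Step 3 (toral part and independence).} The torus $\MT/\MT^{der}$ is handled by class field theory: its image is a Hecke character image, which is open by Serre's theory of abelian $\ell$-adic representations. To combine the primes, use Serre's independence criterion. Jordan--Hölder factors of the images in different $\ell$ are eventually disjoint Lie-type groups in distinct characteristics. Together with the Goursat lemma, this shows that the product image has finite index in the product of the local images.

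The main obstacle is Step~2, specifically matching the Nori group with the reduction of the integral Mumford--Tate group for almost all $\ell$. This is exactly the content of \cite[Theorem 6.6]{cadoret2020integral}. In practice I would invoke that theorem once the Mumford--Tate conjecture (Theorem~\ref{thm.MT}) and semisimplicity are in place, rather than reprove it.
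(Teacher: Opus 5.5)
Invoking \cite[Theorem 6.6]{cadoret2020integral} is exactly what the paper does; it gives no argument beyond the citation. However, the reduction you sketch around it would not prove the statement, and ``Mumford--Tate conjecture plus semisimplicity'' is not what the adelic conclusion rests on.

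The gap is in Step 2(c). Nori's theory only sees the subgroup generated by elements of order $\ell$, i.e.\ the image of $\MT^{sc}(\mathbb{F}_\ell)$. The lifting lemma then gives the image of $\MT^{sc}(\mathbb{Z}_\ell)$, not ``the full $\mathbb{Z}_\ell$-points of the derived part''. This matters. Take $F=\mathbb{Q}$: then $\MT(H)\cong\SO(V)$ is semisimple, so your Step 3 is vacuous. But $\mathrm{Spin}(V)(\mathbb{Z}_\ell)\to\SO(V)(\mathbb{Z}_\ell)$ has image the spinor-norm kernel, which has index $2$ for every $\ell\gg0$. An open subgroup of $\prod_\ell\MT(H)(\mathbb{Z}_\ell)$ must project onto $\MT(H)(\mathbb{Z}_\ell)$ for almost all $\ell$. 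So if the $\ell$-adic image lay in the spinor kernel for infinitely many $\ell$, the adelic image would have infinite index, whatever Serre's independence criterion says. Likewise, in the CM case, openness of the toral image at each $\ell$ is not enough. You need surjectivity onto $(\MT/\MT^{der})(\mathbb{Z}_\ell)$ for almost all $\ell$.

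This is precisely Cadoret--Moonen's Hodge-maximality condition. Their result for abelian varieties satisfying the Mumford--Tate conjecture gives adelic openness only under that hypothesis, which is why the K3 case is a separate theorem. The K3-specific input you are missing runs as follows.
\begin{itemize}
\item After a finite extension, $\rho_\ell=\mathrm{Ad}\circ\rho^{KS}_\ell$.
\item For $g\in\GSpin$, $\mathrm{sn}(\mathrm{Ad}(g))\equiv\nu(g)$ modulo squares, where $\nu$ is the spinor similitude.
\item $\nu\circ\rho^{KS}_\ell$ is the multiplier of the Weil pairing, hence $\chi_\ell^{\pm1}$ up to a finite-order character.
\end{itemize}
So $\mathrm{sn}\circ\rho_\ell$ is, up to a fixed finite-order character, the quadratic character of $\mathbb{Q}(\sqrt{\pm\ell})$. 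It is therefore nontrivial for $\ell\gg0$ and independent across $\ell$. Equivalently, the Hodge cocharacter of $\SO(V)$ does not lift to $\mathrm{Spin}(V)$.

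In the CM case, the toral image is governed by the reflex norm $x\mapsto x/\bar x$ from $\Res_{F/\mathbb{Q}}\mathbb{G}_m$ to $\Res_{F^+/\mathbb{Q}}U_1$. Its kernel $\Res_{F^+/\mathbb{Q}}\mathbb{G}_m$ is connected, so Hilbert 90 gives surjectivity on $\mathbb{Z}_\ell$-points for unramified $\ell$.

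A minor point in Step 1: connectedness of the transcendental monodromy does not make the Néron--Severi action trivial. You need a further finite extension, which is harmless.
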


Thus, there exist a finite Galois extension $K'/K$ and a finite set $\Sigma$ of rational primes such that for every finite set $S \subset \Sigma^c$, we have 
\begin{equation}\label{eq.ladicimagesNF}
    \mathrm{im}\left(G_{K'} \xrightarrow{(\rho_{\ell})_{\ell \in S}} \prod_{\ell \in S} \MT(H)(\mathbb{Z}_{\ell}) \right) = \prod_{\ell \in S} \MT(H)(\mathbb{Z}_{\ell}).
\end{equation}
For ease of notation, for a set of rational primes $S$ let $\lambda_S$ denote the map above. After possibly enlarging $\Sigma$, a similar fact holds for the reduction of $\lambda_S$ and $\mathbb{F}_{\ell}$ points
\begin{equation}\label{eq.imagesNF}
    \mathrm{im}(\overline{\lambda_S}(G_{K'})) = \prod_{\ell \in S} \MT(H)(\mathbb{F}_{\ell}) = \prod_{\ell \in S} \mathrm{im}(\bar{\rho}_{\ell}) 
\end{equation}

This leads to the following straightforward application of the Chebotarev density theorem. 

\begin{proposition}[{\cite[Proposition 2.12]{zywina2014splitting}}]\label{prop.Zywina}
    Fix a finite extension $K'/K$ so that 
    \[\rho_X(\Gal(\bar{K}/K')) \subseteq \MT(H)(\hat{\mathbb{Z}})\] 
    satisfying \ref{eq.ladicimagesNF} and fix $\Sigma$ as above. Let $S$ be a finite set of rational primes in $S \subset \Sigma^c$. For each $\ell \in S$, fix a subset $C_{\ell}$ of the group $\bar{\rho}_{\ell}$ that is stable under conjugation. Let $T$ be the set of primes $v$ of $K$ such that $\bar{\rho}_{\ell}(Frob_v) \subseteq C_{\ell}$ for all $\ell \in S$. Then $T$ has density 
    \[\delta(T) = \sum_{b} \frac{|b|}{|\Gal(K'/K)|} \cdot \prod_{\ell \in S} \frac{|\bar{\rho}_{\ell}(\Gamma_b) \cap C_{\ell}|}{|\bar{\rho}_{\ell}(\Gamma_b)|} \]
    where $b$ varies over the conjugacy classes of $\Gal(K'/K)$ and $\Gamma_b$ is the set of $\sigma \in \Gal_K$ for which $\sigma|_{K'} \in b$. 
\end{proposition}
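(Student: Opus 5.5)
The plan is to reduce to the Chebotarev density theorem for the finite Galois extension of $K$ cut out by the product of the residual representations together with $K'$. Let $S \subset \Sigma^c$ be finite and set $\bar\lambda_S = (\bar\rho_\ell)_{\ell\in S}$. Let $L_S$ be the fixed field of $\ker \bar\lambda_S \cap \Gal(\bar K/K')$, and let $M_S = L_S K'$, which is Galois over $K$. The condition $\bar\rho_\ell(\mathrm{Frob}_v)\in C_\ell$ for all $\ell\in S$ is a condition on the conjugacy class of $\mathrm{Frob}_v$ in $\Gal(M_S/K)$; conjugation-stability of each $C_\ell$ makes it well defined. Hence Chebotarev gives $\delta(T) = \#\mathcal{C}/\#\Gal(M_S/K)$, where $\mathcal{C}$ is the set of $\sigma\in\Gal(M_S/K)$ with $\bar\rho_\ell(\sigma)\in C_\ell$ for every $\ell$. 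In the function field case I would apply the same version of Chebotarev cited in Proposition \ref{prop.Chebotarev}.

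Next I count $\mathcal{C}$ fibrewise over the restriction map $\Gal(M_S/K)\to\Gal(K'/K)$. For $\beta\in\Gal(K'/K)$, the fibre over $\beta$ is a coset $\tilde\beta\,\Gal(M_S/K')$. Its image under $\bar\lambda_S$ is the coset $\bar\lambda_S(\tilde\beta)\,\bar\lambda_S(G_{K'})$. By \eqref{eq.imagesNF}, $\bar\lambda_S(G_{K'}) = \prod_{\ell\in S}\MT(H)(\mathbb{F}_\ell)$ is a full product, and $\Gal(M_S/K')$ maps isomorphically onto it. So the coset image is $\prod_\ell \bar\rho_\ell(\tilde\beta)\MT(H)(\mathbb{F}_\ell)$, and each factor equals $\bar\rho_\ell(\Gamma_\beta)$. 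The map from the fibre to this product is a bijection, so the number of elements of $\mathcal{C}$ in the fibre is
\[\prod_{\ell\in S}|\bar\rho_\ell(\Gamma_\beta)\cap C_\ell|,\]
while the fibre has $\prod_\ell|\bar\rho_\ell(\Gamma_\beta)|$ elements.

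Summing over $\beta$ and dividing by $\#\Gal(M_S/K) = \#\Gal(K'/K)\cdot\prod_\ell|\MT(H)(\mathbb{F}_\ell)|$ gives
\[\delta(T)=\frac{1}{|\Gal(K'/K)|}\sum_{\beta}\prod_{\ell\in S}\frac{|\bar\rho_\ell(\Gamma_\beta)\cap C_\ell|}{|\bar\rho_\ell(\Gamma_\beta)|}.\]
The summand depends only on the conjugacy class $b$ of $\beta$, because $\Gamma_{\beta'}$ is conjugate to $\Gamma_\beta$ and $C_\ell$ is conjugation-stable. Grouping by classes therefore gives the factor $|b|$, which is the stated formula.

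The main obstacle is the independence step: the claim that $\Gal(M_S/K')\to\prod_\ell\MT(H)(\mathbb{F}_\ell)$ is surjective, i.e. that $L_S$ and the fields for different $\ell$ are linearly disjoint over $K'$. Surjectivity is exactly \eqref{eq.imagesNF}, so I would invoke it directly, enlarging $\Sigma$ as the paper allows. I would also check the mild point that $\bar\rho_\ell(\Gamma_\beta)$ is a single coset of $\MT(H)(\mathbb{F}_\ell)$ rather than a union of cosets; this holds because $\Gamma_\beta = \tilde\beta G_{K'}$.
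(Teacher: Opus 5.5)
Your proposal is correct and is essentially the argument the paper invokes, namely Zywina's proof: apply Chebotarev to the compositum of $K'$ with the mod-$\ell$ fields, count fibrewise over $\Gal(K'/K)$, and use \eqref{eq.imagesNF} as the independence input. The one step you leave implicit is that the ratio for $\Gamma_b$ (the union over the whole class $b$, as the statement defines it) equals the ratio for a single $\Gamma_\beta$. This holds because $\bar\rho_\ell(\Gamma_b)$ is a union of mutually conjugate cosets of the normal subgroup $\bar\rho_\ell(G_{K'})$, and each such coset meets $C_\ell$ in the same number of elements.
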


\begin{proof}
    The same proof as in \emph{\frenchspacing{loc. cit.}} holds. In particular, the definition of $\Sigma$ and \ref{eq.imagesNF} fill the same role as the input of independence (\cite[138]{serre2013oeuvres}) in Zywina's proof. 
\end{proof}

\subsubsection{Function Field Case}
When $K$ is a global function field, we use the following general result to control the mod $\ell$ image of the geometric fundamental group under $\bar{\rho}_{\ell}$. 

\begin{theorem}[Function Field Open Image, {\cite[Theorem 7.3, (7.3.2)]{cadoret2017geometric}}]\label{thm.FFopen}
Let $Y$ be a connected, smooth, separated scheme over an algebraically closed field $k$, and let $f\colon Z\to Y$ be a smooth proper morphism of schemes. There exists a connected finite étale cover $Y'\to Y$ such that the following holds. Fix a geometric point $\bar{y}\to Y'$ and write $\Pi_{\ell,\infty}$ for the image of
\[
\rho_\ell\colon \pi_1(Y',\bar{y})
\rightarrow
\operatorname{GL}\left((H^*(Z_{\bar{y}},\mathbb{Z}_\ell)\right),
\]
and $\mathcal{G}_{\ell}$ for its Zariski closure. There exists a constant $C_r\geq 1$, depending only on $r=\mathrm{rank}_{\mathbb{Z}_\ell} H^*(Z_{\bar{y}},\mathbb{Z}_\ell)$
such that $[\mathcal{G}_{\ell}(\mathbb{Z}_\ell):\Pi_{\ell,\infty}] \leq C_r$
for all $\ell\gg 0$. In particular, for $\ell \gg 0$, $\Pi_{\ell,\infty} =\mathcal{G}_{\ell}(\mathbb{Z}_\ell)^+$, where $\mathcal{G}_{\ell}(\mathbb{Z}_\ell)^+$ denotes the closed subgroup generated by the Sylow $\ell$-subgroups.
\end{theorem}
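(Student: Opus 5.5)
The plan follows the strategy of Cadoret--Hui--Tamagawa: first control the mod $\ell$ image through Nori's theory, and then lift that control to $\mathbb{Z}_\ell$. Since $k$ is algebraically closed, $\pi_1(Y',\bar y)$ is a geometric fundamental group, and by spreading out it suffices to treat the case where $Y$ and $f$ descend to a finitely generated field. First, I will pass to the finite étale cover $Y'$ on which the $\ell$-adic monodromy is connected; by the Larsen--Pink independence results quoted in \S2, this cover can be chosen independently of $\ell$. By Deligne's semisimplicity theorem for geometric monodromy (Weil II), the neutral component $\mathcal{G}_\ell^\circ$ is semisimple. In particular $\mathcal{G}_\ell$ has no torus quotient, which is what makes an index bound depending only on $r$ plausible.

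Next I will study the reduction $\bar\Pi_\ell\subset \GL_r(\mathbb{F}_\ell)$ of $\Pi_{\ell,\infty}$. By Nori's theorem, for $\ell\gg_r 0$ the subgroup $\bar\Pi_\ell^+$ generated by elements of order $\ell$ equals $S_\ell(\mathbb{F}_\ell)^+$ for a connected algebraic group $S_\ell/\mathbb{F}_\ell$ generated by unipotents. A Jordan-type theorem (Larsen--Pink) then shows that $\bar\Pi_\ell/\bar\Pi_\ell^+$ has an abelian prime-to-$\ell$ normal subgroup of index bounded in terms of $r$. To bound that abelian part, I will use the tame ramification argument: the abelian prime-to-$\ell$ quotient is a quotient of the tame fundamental group. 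Its image is forced to be bounded because Weil-weight/determinant considerations apply to every character factoring through it. The semisimplicity of the monodromy rules out large torus-type quotients, which yields $[\bar\Pi_\ell:\bar\Pi_\ell^+]\le C_r$.

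The main obstacle is comparing $S_\ell$ with the reduction of $\mathcal{G}_\ell$, namely showing that $S_\ell$ coincides with the special fiber of the Zariski closure $\mathcal{G}_{\ell,\mathbb{Z}_\ell}$ for $\ell\gg 0$. I would attempt this in two steps. First, I would show that the $\mathbb{Z}_\ell$-model $\mathcal{G}_{\ell,\mathbb{Z}_\ell}$ is reductive with semisimple neutral component for $\ell\gg0$; here I would use the $\ell$-independence of characteristic polynomials and the existence of a common $\mathbb{Q}$-form of the formal character (Serre, Larsen--Pink). Second, I would compare dimensions and root data: the dimension of $S_\ell$ is bounded below using an $\ell$-adic logarithm argument on the congruence kernel $\Pi_{\ell,\infty}\cap\ker(\bmod\ \ell)$, and it is bounded above by $\dim\mathcal{G}_\ell$. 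The key input is that the first congruence quotient of $\Pi_{\ell,\infty}$ is an $\mathbb{F}_\ell$-Lie algebra stable under $\bar\Pi_\ell$. For $\ell\gg0$ it then coincides with $\mathrm{Lie}(S_\ell)$, which forces $S_\ell=(\mathcal{G}_{\ell})_{\mathbb{F}_\ell}^\circ$.

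Finally, I will lift from $\mathbb{F}_\ell$ to $\mathbb{Z}_\ell$. For $\ell\gg_r0$, a closed subgroup of the hyperspecial group $\mathcal{G}_\ell(\mathbb{Z}_\ell)$ whose reduction contains $\mathcal{G}_\ell(\mathbb{F}_\ell)^+$ already contains $\mathcal{G}_\ell(\mathbb{Z}_\ell)^+$. I would prove this by Serre's lifting lemma, generating successive congruence layers via commutators and $\ell$-th powers, which works since $\mathcal{G}_\ell^{\circ}$ is semisimple and $\ell$ is large. Combined with the mod-$\ell$ bound, this gives $[\mathcal{G}_\ell(\mathbb{Z}_\ell):\Pi_{\ell,\infty}]\le C_r$. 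Since $\mathcal{G}_\ell(\mathbb{Z}_\ell)^+$ is generated by Sylow $\ell$-subgroups and any subgroup of index $<\ell$ contains all of them, we get $\Pi_{\ell,\infty}=\mathcal{G}_\ell(\mathbb{Z}_\ell)^+$ for $\ell\gg0$.
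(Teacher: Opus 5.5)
\textbf{Verdict: genuine gap.} The paper does not prove this statement; it imports it from \cite[Theorem 7.3]{cadoret2017geometric}, so your outline has to stand on its own as a reconstruction of that proof. The skeleton (Nori, a Jordan/Larsen--Pink bound on $\bar\Pi_\ell/\bar\Pi_\ell^+$, lifting from $\mathbb{F}_\ell$ to $\mathbb{Z}_\ell$) is reasonable. The gap is the step you yourself flag as the main obstacle: identifying Nori's group $S_\ell$ with the special fibre of $\mathcal{G}_\ell$. Your proposed tools do not achieve it, and this step carries essentially all of the geometric content.

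Both of your sub-steps use only information that cannot see the mod-$\ell$ image. Take $\Pi=\{g\in \mathrm{SL}_2(\mathbb{Z}_\ell): g \bmod \ell\in \pm U(\mathbb{F}_\ell)\}$, with $U$ the upper unipotent subgroup.
\begin{itemize}
\item $\Pi$ contains the first congruence subgroup, so its Zariski closure is $\mathrm{SL}_{2,\mathbb{Z}_\ell}$. This is reductive and has the same generic fibre, hence the same formal character and characteristic-polynomial constraints, as in the maximal case.
\item $[\bar\Pi:\bar\Pi^+]=2$, so $\Pi$ passes your abelian-part bound.
\item Yet $S_\ell=U$, the first congruence quotient of $\Pi$ is all of $\mathfrak{sl}_2(\mathbb{F}_\ell)$ rather than $\mathrm{Lie}(U)$, and $[\mathrm{SL}_2(\mathbb{Z}_\ell):\Pi]=(\ell^2-1)/2$ is unbounded.
\end{itemize}
Likewise $\Pi=\ker(\mathrm{SL}_2(\mathbb{Z}_\ell)\to \mathrm{SL}_2(\mathbb{F}_\ell))$ has $S_\ell=1$. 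So the claim that the congruence layer coincides with $\mathrm{Lie}(S_\ell)$ for $\ell\gg0$ is false in general.

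Reductivity of the $\mathbb{Z}_\ell$-closure would therefore not suffice even if you had it. You also do not get it from $\ell$-independence of characteristic polynomials and formal characters. These are invariants of the $\mathbb{Q}_\ell$-group, available only after passing to an arithmetic model, since $\pi_1(Y',\bar y)$ has no Frobenius elements. Even for arithmetic compatible systems, such arguments give hyperspecial images only for a density-one set of $\ell$ (Larsen's maximality theorem), not for all $\ell\gg0$.

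What is missing is a genuinely geometric input forcing $\bar\Pi_\ell$ to be both semisimple and as large as $\mathcal{G}_\ell$, uniformly in $\ell$.
\begin{itemize}
\item The first example is excluded by semisimplicity of $H^*(Z_{\bar y},\mathbb{F}_\ell)$ as a $\pi_1$-module for $\ell\gg0$. This almost-semisimplicity theorem is the main result of \cite{cadoret2017geometric} and is the input used there to control $S_\ell$.
\item The second example needs a separate uniform lower bound on the mod-$\ell$ image. One option is to show that $\bar\Pi_\ell$ has no more invariants on bounded-degree tensor constructions than $\mathcal{G}_\ell$; these can be realized geometrically via fibre powers of $Z\to Y$. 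In characteristic $0$ you could instead use strong approximation (Nori, Matthews--Vaserstein--Weisfeiler) for the finitely generated Betti monodromy group.
\end{itemize}

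Two smaller points:
\begin{itemize}
\item Semisimplicity of $\mathcal{G}_\ell^\circ$ over $\mathbb{Q}_\ell$ does not rule out large torus quotients mod $\ell$: an Iwahori subgroup of $\mathrm{SL}_2(\mathbb{Z}_\ell)$ reduces to a Borel. Your bound on the abelian part must therefore come entirely from a Frobenius-weight argument on $\pi_1^{\mathrm{ab}}$, with wild ramification handled separately in characteristic $p$.
\item Your last step only yields $\Pi_{\ell,\infty}\supseteq\mathcal{G}_\ell(\mathbb{Z}_\ell)^+$. Equality needs a further cover, chosen independently of $\ell$ using topological finite generation of the tame fundamental group, that kills the bounded quotient $\Pi_{\ell,\infty}/\Pi_{\ell,\infty}^+$.
\end{itemize}
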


Unlike Theorem \ref{thm.NFopen}, this result is for an individual $\ell$ rather than products of $\ell$. Nevertheless, we may use group theory to conclude a result for various finite products that is sufficient for our purposes. Let $\bar{\lambda}_S$ denote the product of $(\bar{\rho}_{\ell})$

\begin{proposition}\label{prop.geometric}
    Let $\mathcal{X} \rightarrow U$ be a generically ordinary one dimensional non-isotrivial family of K3 surfaces over a finite field $\mathbb{F}_q$. There exists a finite étale cover $U' \rightarrow U_{\bar{\mathbb{F}}_q}$ as in Theorem \ref{thm.FFopen} such that for any finite set $S$ of rational primes $\ell \gg 0$ we have 
    \[ \mathrm{im}(\overline{\lambda_S}|_{\pi_1(U', \bar{\eta})}) = D_S := \begin{cases}
        \prod_{\ell \in S} (\Res_{O_F \otimes \mathbb{Z}_{\ell}/\mathbb{Z}_{\ell}} SO(\Lambda_{F,\ell}, \psi)(\mathbb{F}_{\ell}))^{der} & \text{ if } F \text{ is totally real.} \\
        \prod_{\ell \in S} (\Res_{O_F^+ \otimes \mathbb{Z}_{\ell}/\mathbb{Z}_{\ell}} U(\Lambda_{F,\ell}, \psi)(\mathbb{F}_{\ell}))^{der} & \text{ if } F \text{ is CM.}
    \end{cases} \]
    where $\bar{\eta} \rightarrow U'$ is a geometric generic point, and $\lambda_S = (\rho_{\ell})_{\ell \in S}$. In particular, $D_S \subseteq \mathrm{im}(\lambda_S|_{\pi_1(U, \bar{\eta})^{geom}})$ and the index $[\mathrm{im}(\lambda_S|_{\pi_1(U, \bar{\eta})^{geom}}): D_S]$ is bounded uniformly for a fixed family $\mathcal{X} \rightarrow U$. 
\end{proposition}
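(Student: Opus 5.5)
We will deduce the statement from Theorem \ref{thm.FFopen} one prime at a time and then assemble a product statement using Goursat's lemma together with the simplicity of the relevant finite groups of Lie type. First we apply Theorem \ref{thm.FFopen} to the family $\mathcal{X}_{\bar{\mathbb{F}}_q} \to U_{\bar{\mathbb{F}}_q}$, which is smooth over an algebraically closed field. This produces a connected finite étale cover $U'$ such that for $\ell \gg 0$ the $\ell$-adic image $\Pi_{\ell,\infty}$ has index at most $C_r$ in $\mathcal{G}_\ell(\mathbb{Z}_\ell)$ and equals $\mathcal{G}_\ell(\mathbb{Z}_\ell)^+$.

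We restrict to the transcendental part. The Néron--Severi part has finite monodromy, and we may absorb it into $U'$. By Theorem \ref{thm.FFMT} and Lemma \ref{lemma.blocks}, the neutral component of the geometric monodromy is $\Res\SO$ or $\Res U$. The geometric monodromy of a non-isotrivial family is semisimple and connected after a finite cover, so it is the derived group of $G_\ell^\circ$. For $\ell \gg 0$ (unramified in $F$, prime to the discriminant of the integral lattice $\Lambda_F$), this group has a smooth reductive model over $\mathbb{Z}_\ell$. The group generated by its $\ell$-Sylows therefore reduces mod $\ell$ onto the subgroup of $\mathcal{G}^{der}(\mathbb{F}_\ell)$ generated by $\ell$-elements. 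By Nori's theorem (or Steinberg's results for split simply connected types) this subgroup is the image $\mathcal{G}^{sc}(\mathbb{F}_\ell) \to \mathcal{G}^{der}(\mathbb{F}_\ell)$, which for $\ell\gg0$ is the commutator subgroup $(\,\cdot\,)^{der}$ appearing in $D_S$. This proves the single-$\ell$ statement $\mathrm{im}(\bar\rho_\ell|_{\pi_1(U')}) = D_{\{\ell\}}$.

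For a finite set $S$ we argue by induction on $|S|$ using Goursat's lemma. The image $H \subseteq \prod_{\ell\in S} D_{\{\ell\}}$ surjects onto each factor. For $\ell\gg0$, each $D_{\{\ell\}}$ is perfect, and its composition factors are simple groups of Lie type in characteristic $\ell$; modulo the center, it is a product of simple groups of this form. A Goursat-type comparison between $H\cap(\prod_{\ell\in S\setminus\{\ell_0\}}D_{\{\ell\}})$ and the last factor therefore needs a common nonabelian simple quotient of $D_{\{\ell_0\}}$ and of $\prod_{\ell \ne \ell_0} D_{\{\ell\}}$. Excluding coincidences among finitely many small groups, nonabelian simple groups of Lie type in distinct characteristics $\ell\ne\ell'$ large are non-isomorphic (Artin's order argument, as in Serre's and Zywina's independence proofs). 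Abelian common quotients cannot occur because the groups are perfect, and the central parts are handled by Ribet's lemma that a subgroup of a product of perfect groups surjecting onto each pairwise product is everything. So $H = D_S$.

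The last claim follows because $D_S$ is normal in the image of the full geometric fundamental group $\pi_1(U_{\bar{\mathbb{F}}_q})$, and $\pi_1(U')$ has finite index in it. The index $[\mathrm{im}(\lambda_S|_{\pi_1^{geom}}):D_S]$ is at most $[\pi_1(U_{\bar{\mathbb{F}}_q}):\pi_1(U')]$ times the contribution from $G_\ell/G_\ell^{\circ,der}(\mathbb{F}_\ell)$ restricted to geometric monodromy. Geometric monodromy lands in the derived group up to the finite group $\pi_0$, which is independent of $\ell$ and of $S$. I expect the main obstacle to be the single-$\ell$ step: identifying $\mathcal{G}_\ell(\mathbb{Z}_\ell)^+$ mod $\ell$ exactly with the derived group of the restriction of scalars. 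This needs an integral model of $\mathcal{G}_\ell$ agreeing with $\Res\SO(\Lambda_{F,\ell})$ for $\ell\gg0$, which I would get from Cadoret--Hui--Tamagawa's integral comparison, and care with the spin cover.
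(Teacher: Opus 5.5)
Your proposal is correct and follows essentially the same route as the paper. Both arguments use the open image theorem (Theorem \ref{thm.FFopen}) on a cover that makes monodromy connected, then Jiang's theorem and Zarhin's description to identify the group. Both then use the equality $\mathcal{G}(\mathbb{F}_\ell)^+ = \mathcal{G}(\mathbb{F}_\ell)^{der}$ for $\ell \geq 5$ (the paper cites Borel--Tits where you cite Nori/Steinberg), Goursat's lemma for the product over $S$, and the degree of $U' \to U_{\bar{\mathbb{F}}_q}$ for the index bound. You are in fact more explicit than the paper about the Goursat step and about needing a smooth reductive $\mathbb{Z}_\ell$-model of $\mathcal{G}_\ell$ for $\ell \gg 0$. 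The one superfluous piece is the extra factor involving $\pi_0$ in your index bound: the inequality $[\mathrm{im}(\lambda_S|_{\pi_1^{geom}}) : D_S] \leq [\pi_1(U_{\bar{\mathbb{F}}_q}) : \pi_1(U')]$ already suffices.
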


\begin{proof}
    We recall that for a connected reductive group $G$ defined over $\mathbb{F}_{\ell}$ for $\ell \geq 5$, we have $G(\mathbb{F}_{\ell})^+ = G(\mathbb{F}_{\ell})^{der}$ by \cite[Corollaire 6.4]{borel1973homomorphismes}. Then the first assertion follows from Theorem \ref{thm.FFMT}, Zarhin's classification in Proposition \ref{prop.centralizer} and Theorem \ref{thm.FFopen}, by taking $U' \rightarrow U$ to be large enough so that the monodromy group is connected and then applying Goursat's Lemma \cite[5.2]{ribet1976galois}. The second claim is by the choice of our finite étale cover as in Theorem \ref{thm.FFopen}.
\end{proof}

The relevant version of Chebotarev density is the following

\begin{proposition}[{\cite[Theorem 4.1]{chavdarov1995generic}}]\label{prop.chavdarov}
    Let $U/\mathbb{F}_{q}$ be a geometrically irreducible variety and assume that we have a commutative diagram
\[\begin{tikzcd}
	1 & {\pi_1^{geom}} & {\pi_1} & {\hat{\mathbb{Z}}} & 1 \\
	1 & {G_0} & G & \Gamma & 1
	\arrow[from=1-1, to=1-2]
	\arrow[from=1-2, to=1-3]
	\arrow["{\lambda_0}", two heads, from=1-2, to=2-2]
	\arrow[from=1-3, to=1-4]
	\arrow["\lambda", from=1-3, to=2-3]
	\arrow[from=1-4, to=1-5]
	\arrow["{1 \mapsto -\gamma}", from=1-4, to=2-4]
	\arrow[from=2-1, to=2-2]
	\arrow[from=2-2, to=2-3]
	\arrow["mult"', from=2-3, to=2-4]
	\arrow[from=2-4, to=2-5]
\end{tikzcd}\]
where $G$ is a finite group, $\Gamma$ is abelian, and $\lambda_0$ is surjective. Let $C \subset G$ be stable under $G$-conjugation and let $G^{(\gamma^n)} = \{g \in G | mult(g) = \gamma^n \}$. Then 
\[\left|\mathrm{Prob} \{u \in U(\mathbb{F}_{q^n}) |\lambda(\mathrm{Frob}_{\mathbb{F}_{q^n},u}) \in C \} - \frac{\#(C \cap G^{(\gamma^n)})}{\#G_0}  \right| \leq O\left(\frac{1}{\sqrt{q^n}}\right) \]
where the implicit constant only depends on $U$. 
\end{proposition}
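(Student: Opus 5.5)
The plan is to realize $\lambda$ geometrically and apply the Grothendieck--Lefschetz trace formula with Deligne's weight bounds, decomposing the class function $\mathbf{1}_C$ into irreducible characters of $G$. Replacing $G$ by $\lambda(\pi_1)$ costs nothing: since $\lambda_0$ is surjective onto $G_0$, the image is $G_0$ extended by the subgroup of $\Gamma$ generated by $\gamma$, and the counting only sees this subgroup. The map $\lambda$ then corresponds to a finite étale Galois cover $U_G\to U$ with group $G$. For $u\in U(\mathbb{F}_{q^n})$ the Frobenius conjugacy class $\lambda(\mathrm{Frob}_{\mathbb{F}_{q^n},u})$ is well defined. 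By commutativity of the diagram it lies in the coset $G^{(\gamma^n)}$, with the sign convention $1\mapsto -\gamma$ read multiplicatively, as in the statement. So only $C\cap G^{(\gamma^n)}$ matters.

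\textbf{Step 1: character expansion.} Write $\mathbf{1}_{C\cap G^{(\gamma^n)}}=\sum_{\chi\in\mathrm{Irr}(G)} c_\chi \chi$ with $c_\chi=\frac{1}{\#G}\sum_{g\in C\cap G^{(\gamma^n)}}\overline{\chi(g)}$. Each $\chi$ gives a lisse $\overline{\mathbb{Q}}_\ell$-sheaf $\mathcal{F}_\chi$ on $U$, pure of weight $0$, with $\mathrm{tr}(\mathrm{Frob}_u\mid\mathcal{F}_\chi)=\chi(\lambda(\mathrm{Frob}_u))$. Summing over $u$ and using the trace formula,
\[\#\{u:\lambda(\mathrm{Frob}_u)\in C\}=\sum_\chi c_\chi\sum_{i}(-1)^i\,\mathrm{tr}\bigl(\mathrm{Frob}_{q^n}\mid H^i_c(U_{\bar{\mathbb{F}}_q},\mathcal{F}_\chi)\bigr).\]

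\textbf{Step 2: main versus error terms.} Let $d=\dim U$, with $U$ geometrically irreducible.

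If $\chi|_{G_0}$ contains no trivial constituent, then $H^{2d}_c(U_{\bar{\mathbb{F}}_q},\mathcal{F}_\chi)=(\mathcal{F}_\chi)_{\pi_1^{geom}}(-d)=0$. This uses surjectivity of $\lambda_0$. Deligne's Weil II bounds the remaining weights by $2d-1$, so the contribution is $O(q^{n(d-1/2)})$.

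If $\chi|_{G_0}$ does contain the trivial character, then since $G_0$ is normal and $\chi$ is irreducible, $\chi$ is trivial on $G_0$. Hence $\chi$ is a character of the cyclic group $G/G_0\hookrightarrow\Gamma$, and Frobenius acts on $H^{2d}_c$ by $q^{nd}\chi(\gamma^n)$. Summing these main terms over all such $\chi$ picks out exactly
\[q^{nd}\cdot\frac{\#(C\cap G^{(\gamma^n)})}{\#G_0},\]
by orthogonality on $G/G_0$. Concretely, $\sum_{\chi\text{ trivial on }G_0}\chi(\gamma^n)\overline{\chi(g)}$ equals $[G:G_0]$ if $g\in G^{(\gamma^n)}$ and $0$ otherwise. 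The lower cohomology of these sheaves is again $O(q^{n(d-1/2)})$.

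Dividing by $\#U(\mathbb{F}_{q^n})=q^{nd}+O(q^{n(d-1/2)})$, by Lang--Weil, gives the stated $O(q^{-n/2})$. The constants involved are $\sum_\chi|c_\chi|\chi(1)$ and the sums of Betti numbers $\sum_i\dim H^i_c(U_{\bar{\mathbb{F}}_q},\mathcal{F}_\chi)$, both independent of $n$.

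\textbf{Main obstacle.} The delicate point is uniformity of the constant. Each $\dim H^i_c(U,\mathcal{F}_\chi)$ is at most $\chi(1)\cdot\dim H^i_c(U_G)$, so it is controlled by the cover. I would bound $\sum_\chi|c_\chi|\chi(1)\le\sum_\chi\chi(1)^2=\#G$ and use $H^*_c(U_G)$. In dimension one the Euler characteristic can be controlled by Grothendieck--Ogg--Shafarevich together with tame ramification when $\ell\nmid\#G$; in general one can use Katz's bound on sums of Betti numbers. This gives dependence on $U$ and the cover. The statement's ``depends only on $U$'' should be read with the cover $U_G\to U$ fixed. Making this independent of $G$, as needed for the sieve over growing $S$, is where I expect the real work. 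There I would follow Chavdarov's approach: use the tame, degree-bounded nature of the covers coming from the $\ell$-torsion of the family, so that Betti numbers grow at most polynomially in $\#G$.
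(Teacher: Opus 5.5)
The paper gives no proof of this statement; it only cites \cite[Theorem 4.1]{chavdarov1995generic}. Your argument is a correct proof, assuming $U$ is normal (see below), under the reading you identify: $\lambda$, hence $G$, is fixed and the constant may depend on the cover. It follows a route dual to the standard one.

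The standard proof works on the cover $\tilde U\to U_{\bar{\mathbb F}_q}$ attached to $\ker\lambda_0$, which is geometrically irreducible because $\lambda_0$ is onto. For each $g\in C\cap G^{(\gamma^n)}$ it counts fixed points of the twisted Frobenius $g^{-1}F^n$ on $\tilde U$ by the Lefschetz formula. Only the one-dimensional $H^{2d}_c(\tilde U)$ gives a main term, so before Lang--Weil one gets the explicit error $\frac{\#(C\cap G^{(\gamma^n)})}{\#G_0}\sum_{i<2d}\dim H^i_c(\tilde U)\,q^{n(d-1/2)}$.

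Your character expansion is the isotypic decomposition of the same cohomology, and it recovers exactly this constant. Use $\pi_*\overline{\mathbb Q}_\ell\cong\bigoplus_\chi\mathcal F_\chi^{\oplus\chi(1)}$, which gives $\sum_\chi\chi(1)\dim H^i_c(U_{\bar{\mathbb F}_q},\mathcal F_\chi)=[G:G_0]\dim H^i_c(\tilde U)$, together with $|c_\chi|\le\chi(1)\,\#(C\cap G^{(\gamma^n)})/\#G$. Your bound $\dim H^i_c(\mathcal F_\chi)\le\chi(1)\dim H^i_c(U_G)$ is true but loses a factor $\chi(1)^2$. The twisted-Frobenius route is shorter and gives the explicit constant at once. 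The character route is more modular, and it is what you want if you later exploit cancellation between different $\chi$, as in large-sieve arguments.

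Two further remarks.

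\textbf{Uniformity in $G$.} What you call the main obstacle does not arise in this paper. In the proof of Theorem \ref{thm.sqclass}, $S$ (hence $G$) is fixed first, and only then is $n_0$ chosen with $R_S/\sqrt{q^{n_0}}<\epsilon/2$. So the proposition is applied to one cover at a time, and a cover-dependent constant is all that is used. The $R_S$ there does depend on $S$, despite the wording. No Betti-number bounds polynomial in $\#G$ are needed.

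Your caveat about the literal statement is nonetheless right: uniformity over all covers of a fixed $U$ is false. On $\mathbb A^1_{\mathbb F_p}$, the Artin--Schreier cover $y^p-y=x^{p^n+1}-x^2$ has $\lambda_0$ onto $\mathbb Z/p$, yet Frobenius is trivial at every $x\in\mathbb F_{p^n}$. Also, tameness of the cover is guaranteed by $p\nmid\#G$, not $\ell\nmid\#G$.

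\textbf{Normality.} The identification $H^{2d}_c(U_{\bar{\mathbb F}_q},\mathcal F_\chi)=(\mathcal F_\chi)_{\pi_1^{geom}}(-d)$ needs $U$ normal, so that a dense smooth open subset has the same geometric monodromy. For a merely geometrically irreducible $U$ the proposition fails. On a split nodal cubic, the cyclic covers unwinding the node have $\lambda_0$ surjective, yet every smooth rational point has trivial Frobenius. This is harmless in the application, where $U$ is open in a smooth curve, but you should state the hypothesis.
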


\subsection{Generic Jump 1}
We consider the case when the subset 
\[\{p | \rho(\overline{X}_p) = \rho(\overline{X}) + 1 \} \]
of places of $K$ is of density 1. By Theorem \ref{thm.main}, the only possible case here is that $F = \mathbb{Q}$ and that $\dim V$ is odd. This condition corresponds to the Mumford-Tate group being as large as possible, and roughly speaking is true for ``most'' moduli points. 

Note that in this case, the neutral component of the algebraic monodromy group is the full special orthogonal group $SO(V_{\ell}, \psi_{\ell})$. This means that generically an element in the determinant $\epsilon = \pm 1$ component has exactly one eigenvalue equal to a root of unity, namely $\epsilon$. Let $U_{\ell}$ denote the set of elements satisfying this simple eigenvalue condition. In the notation of \ref{eq.mult}, 
\[U_{\ell} = \{x \in O(V_{\ell}, \psi_{\ell}): m(x) = 1\}.  \]
Then for a prime $p$ corresponding to $x = \mathrm{Frob}_p \in U_{\ell}$, we have the extra cycle space is a line $L_x$. The quadratic form $Q$ restricted to $L_x$ determines a single squareclass $a_{x,\ell} \in \mathbb{Q}_{\ell}^{\times}/(\mathbb{Q}_{\ell}^{\times}))^2$ coming from a single rational square-class $a_x \in \mathbb{Q}^{\times}/(\mathbb{Q}^{\times})^2$ since the conjugacy class of Frobenius is defined over $\mathbb{Q}$. 

Let $p_x$ be the characteristic polynomial of an element $x$ and  
\[r(x) = \mathrm{ord}_{T-1} (p_x) + \mathrm{ord}_{T+1} (p_x)\]
denote the total algebraic multiplicity of the eigenvalues $1$ and $-1$. Fix an integral quadratic lattice $(\Lambda,Q)$ in $V_{\ell}$. After excluding finitely many primes $\ell$, the reduction of $Q$ on $\bar{V}_{\ell} = \Lambda/\ell \Lambda$ is nondegenerate. For such $\ell$, let
\[\bar{U}_{\ell} = \{x \in U_{\ell} : r(x) = 1 \}. \]
For such $x$, its reduced eigenline is nondegenerate. Consequently for such $x$ and $\ell$, $a_{x,\ell}$ is represented by an element of $\mathbb{Z}_{\ell}^{\times}$. 

Now fix a positive integer $D$. The condition that at a point $p$
\[\iota^{KS}(p) \in \bigcup_{m \geq 1} Z(Dm^2) \]
means that $x = \mathrm{Frob}_p$ has a fixed vector $v$ satisfying $[a_x] = [D] \in \mathbb{Q}^{\times}/(\mathbb{Q}^{\times})^2$. Having the same square class in $\mathbb{Q}^{\times}$ implies the same square class in $\mathbb{Q}_{\ell}^{\times}$. Thus after  we additionally impose the condition that $\ell \nmid D$, this implies $a_{x,\ell} = D \in \mathbb{Z}_{\ell}^{\times}/(\mathbb{Z}_{\ell}^{\times})^2$ and after reduction mod $\ell$, they must represent the same squareclass in $\mathbb{F}_{\ell}^{\times}$. For convenience, we introduce the notation 
\[\chi_{\ell,D}(a_{x,\ell}) = \begin{cases}
    1 & \text{ if } a_{x,\ell}, D \text{ represent the same square class mod } \ell\\
    -1 & \text{ otherwise }
\end{cases} \]
Then define 
\[C_{D,\ell} = (\mathcal{G}_{\ell}(\mathbb{F}_{\ell}) \setminus \bar{U}_{\ell}) \cup \{x \in \bar{U}_{\ell} | \chi_{\ell,D} = 1 \}. \]
\begin{lemma}
    $C_{D,\ell}$ is stable under conjugation and for any point $p \in Z(Dm^2)$, the image of its Frobenius element lands in $C_{D,\ell}$.  
\end{lemma}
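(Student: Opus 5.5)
The statement has two parts, and I will treat them separately: conjugation stability of $C_{D,\ell}$, and the fact that Frobenius at a point of $Z(Dm^2)$ lands in $C_{D,\ell}$. Throughout I work with the reduced lattice $\bar{V}_\ell = \Lambda/\ell\Lambda$. Here $\ell$ is chosen with $\ell \nmid 2D$ and with $Q$ nondegenerate mod $\ell$. The sets $\bar{U}_\ell$ and $\chi_{\ell,D}$ are interpreted on $\mathcal{G}_\ell(\mathbb{F}_\ell)$ via the mod $\ell$ characteristic polynomial. Concretely, $\bar{U}_\ell$ is the set of $x$ for which $T-1$ and $T+1$ together divide $p_{\bar x}$ exactly once. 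For such $x$, $a_{x,\ell}$ is the class of $Q(v)$ in $\mathbb{F}_\ell^\times/(\mathbb{F}_\ell^\times)^2$, where $v$ spans the corresponding $\pm1$-eigenline of $\bar x$.

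\emph{Conjugation stability.} For $g \in \mathcal{G}_\ell(\mathbb{F}_\ell) \subseteq \Or(\bar V_\ell, Q)$, the characteristic polynomial is a conjugation invariant. Hence $r(gxg^{-1}) = r(x)$, and $\bar U_\ell$ is stable under conjugation. Its complement in $\mathcal{G}_\ell(\mathbb{F}_\ell)$ is therefore stable too. Suppose $v$ spans the $\epsilon$-eigenline of $x$. Then $gv$ spans the $\epsilon$-eigenline of $gxg^{-1}$, and $Q(gv)=Q(v)$ because $g$ is orthogonal. So $a_{gxg^{-1},\ell} = a_{x,\ell}$, and the condition $\chi_{\ell,D}=1$ is conjugation invariant. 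Since $C_{D,\ell}$ is a union of two conjugation-stable sets, it is conjugation-stable.

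\emph{Frobenius lands in $C_{D,\ell}$.} Let $p$ be a good place with $\iota^{KS}(p) \in Z(Dm^2)$, and put $x = \bar\rho_\ell(\mathrm{Frob}_p)$. If $x \notin \bar U_\ell$, there is nothing to show. Assume $x \in \bar U_\ell$. The point is to lift this information to the $\ell$-adic element $\tilde x = \rho_\ell(\mathrm{Frob}_p)$. The $\ell$-adic characteristic polynomial reduces to that of $x$, so $\tilde x$ has at most one eigenvalue that is a root of unity congruent to $\pm1$. Roots of unity of $\ell$-power order reduce to $1$, and there are no others in $\mathbb{Z}_\ell$ beyond those; I will argue that $r(\tilde x)\le 1$, and hence that $m(\tilde x)\le 1$ among eigenvalues defined over $\mathbb{Q}_\ell$.

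The special endomorphism of norm $Dm^2$ gives a new line bundle $L \in W_p$ with $Q(L) = Dm^2$. By the Tate conjecture argument of Proposition \ref{prop.Chebotarev}, the class of $L$ in $V_\ell$ is fixed by a power of $\tilde x$. After possibly replacing $\mathrm{Frob}_p$ by the relevant sign twist of the Tate class, it spans an eigenline with eigenvalue $\pm1$. Since $r(\tilde x)\le 1$, this eigenline is the unique one, which is $L_{\tilde x}$, and so $a_{\tilde x,\ell} = [Dm^2] = [D]$ in $\mathbb{Q}_\ell^\times/(\mathbb{Q}_\ell^\times)^2$.

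Next I scale $L$ to a primitive vector $v \in \Lambda$ on this line. Using the earlier observation that the eigenline is nondegenerate mod $\ell$ when $r(x)=1$, the vector $\bar v$ is an eigenvector of $x$ with $Q(\bar v) \ne 0$. Thus $Q(v) \in \mathbb{Z}_\ell^\times$, and $Q(v)$ differs from $Dm^2$ by a square in $\mathbb{Q}_\ell^\times$. Since both $Q(v)$ and $D$ are $\ell$-adic units, that square is a unit square. Reducing mod $\ell$ gives $\chi_{\ell,D}(a_{x,\ell}) = 1$, so $x \in C_{D,\ell}$.

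\emph{Main obstacle.} The delicate step is the nondegeneracy claim: the primitive generator $v$ of the eigenline must have unit norm. This rests on the orthogonal decomposition of $\Lambda\otimes\mathbb{Z}_\ell$ into the generalized $\pm1$-part and its complement. That decomposition holds because the two parts have coprime characteristic polynomials mod $\ell$, which uses $\ell \ne 2$ and the fact that $\pm1$ are simple roots mod $\ell$. Eigenspaces for eigenvalues $\lambda,\mu$ with $\lambda\mu \ne 1$ are orthogonal. The eigenvalue $\pm1$ pairs only with itself, so the $\pm1$-line is an orthogonal direct summand of a unimodular lattice and is therefore unimodular. I expect this is exactly where $r(x)=1$ is needed. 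I would check it first.
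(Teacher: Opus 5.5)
\textbf{There is a genuine gap.} Your conjugation-stability argument is the paper's. The step you flag as the main obstacle is also fine: the coprime factorization $(T-\epsilon)\tilde g(T)$ of the characteristic polynomial splits $\Lambda\otimes\mathbb{Z}_\ell$ orthogonally, so the $\epsilon$-eigenline $E_\epsilon$ is unimodular.

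The gap is the step that puts $L$ on that eigenline. That $\tilde x^N$ fixes the class of $L$ only says $L$ lies in $W_p\otimes\mathbb{Q}_\ell$. This is the span of all eigenspaces of $\tilde x$ with root-of-unity eigenvalues, and it has dimension $m(\tilde x)$. No ``sign twist'' makes $L$ an eigenvector.

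Your hypothesis $x\in\bar U_\ell$ is a mod $\ell$ condition. It only sees eigenvalues reducing to $\pm1$, i.e.\ those in $\pm\mu_{\ell^\infty}$. A pair of primitive third, fourth or sixth roots of unity reduces to elements $\neq\pm1$ when $\ell>3$, so it is invisible to $r$. Your claim that $r\le 1$ gives $m\le 1$ ``among eigenvalues defined over $\mathbb{Q}_\ell$'' is also wrong: $\mu_{\ell-1}\subset\mathbb{Z}_\ell^\times$ maps isomorphically onto $\mathbb{F}_\ell^\times$. In any case, conjugate pairs not in $\mathbb{Q}_\ell$ contribute to $W_p$ as well.

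When $m(\tilde x)\ge 2$ but $r(\bar x)=1$, one has $W_{p,\mathbb{Q}}=E_\epsilon\perp W'$ with $W'\neq 0$ the part where Frobenius acts by roots of unity $\neq\pm1$. Then $L$ may have a nonzero $W'$-component, or even lie in $W'$. In that case $[Q(L)]=[D]$ tells you nothing about the class $a_{x,\ell}$ of the eigenline. So with your purely mod-$\ell$ reading of $\bar U_\ell$, the second assertion is not merely unproven: it can fail at such places.

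The missing ingredient is that the new cycle space is one-dimensional, i.e.\ $m(\rho_\ell(\mathrm{Frob}_p))=1$, equivalently $\rho(\overline{X}_p)=\rho(\overline{X})+1$. This is what the paper's proof uses. There $\bar U_\ell$ is defined inside $U_\ell=\{m=1\}$, and the ``argument above'' it cites is that for $\mathrm{Frob}_p\in U_\ell$ the extra cycle space is the single line $L_x$.

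Under this hypothesis, $W_p$ is a line on which Frobenius acts by a rational root of unity, hence by $\epsilon=\pm1$. So $L$ spans the $\epsilon$-eigenline, and the rest of your argument goes through: the unit norm of the primitive generator, the comparison of unit square classes, and reduction mod $\ell$. In the generic jump $1$ setting, the places with $m\ge 2$ have density zero by Theorem \ref{thm.main}(2). So restricting the lemma to places with jump exactly one costs nothing for Theorem \ref{thm.sqclass}.
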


\begin{proof}
    We claim $C_{D,\ell}$ is stable under conjugation. The first summand is, since characteristic polynomials are invariant under conjugation. For the latter, note $a_x = Q(u_x)$ for a generator $u_x$ of $L_x$. Then $L_{gxg^{-1}} = gL_x$ and thus is generated by $g u_x$. The statement follows by noting that $Q(gu_x)$ and $Q(u_x)$ represent the same square class. 

    The second part of the statement follows by noting that if the image of $Frob_p$ has $\pm 1$ with multiplicity at least two, it has image in the first summand of $C_{D,\ell}$. Otherwise, by the argument above, it must be an element of the second summand. 
\end{proof}

Let $\Omega_{\ell}$ denote the kernel of the map 
\[\Or(\bar{V}_{\ell}) \xrightarrow{\det \times \mathrm{sn}} \{\pm 1 \} \times \mathbb{Z}/2\mathbb{Z}  \]
where $\mathrm{sn}$ is the spinor norm. Note that for $\ell \geq 5$, we have the equality 
\[(\SO(\bar{V}_{\ell})(\mathbb{F}_{\ell}))^{der} = \Omega_{\ell} \]
by \cite[Chapter 11]{taylor1992geometry}. Thus, by the open image results above, the mod $\ell$-image always contains $\Omega_{\ell}$. We would like to show that for all choices of $\ell$ and cosets $b \Omega_{\ell}$, we have $x \in C_{D,\ell} \cap b\Omega$ for a proportion $\delta < 1$ of elements of $b \Omega_{\ell}$. 

The general idea is that each $x \in U_{\ell}$ corresponds uniquely to a choice of a fixed line $L \subset \bar{V}_{\ell}$ and to a choice of $B \in SO(L^{\perp})$ having no eigenvalue equal to $\pm 1$. We then note that the additional condition of membership in $b\Omega_{\ell}$ corresponds to a specific spinor norm condition that half of the elements of $B$ satisfy. We formalize this idea below. 

Let
\[\mathcal{L}_a = \{L \subset \bar{V}_{\ell}, \dim L = 1:  Q|_L \neq 0, [Q(L)] = a\} \]
denote the set of lines in $\bar{V}_{\ell}$ that represent a given squareclass $a$. 

This counting lemma shows that roughly $1/2$ of non-isotropic lines represent a given square class in $\mathbb{F}_{\ell}^{\times}$. 
\begin{lemma}\label{lemma.lines}
	Fix $L \in \mathcal{L}_a$. We have 
	\[\# \mathcal{L}_a = \frac{\#O(\bar{V}_{\ell})}{\#2O(L^{\perp})} \]
\end{lemma}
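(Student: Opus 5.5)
The plan is an orbit--stabilizer argument for the action of $\Or(\bar{V}_{\ell})$ on the set of non-isotropic lines of the nondegenerate quadratic space $\bar{V}_{\ell}$ over $\mathbb{F}_{\ell}$, with $\ell$ odd. I would show that $\mathcal{L}_a$ is a single orbit, and that the stabilizer of $L$ is isomorphic to $\Or(L)\times \Or(L^{\perp}) \cong \{\pm 1\}\times \Or(L^{\perp})$, which has order $2\,\#\Or(L^{\perp})$. The formula $\#\mathcal{L}_a = \#\Or(\bar{V}_{\ell})/(2\,\#\Or(L^{\perp}))$ then follows.

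\textbf{Transitivity.} Let $L, L' \in \mathcal{L}_a$ be spanned by $u$ and $u'$. Since $[Q(u)] = [Q(u')] = a$, I can rescale $u'$ by an element of $\mathbb{F}_{\ell}^{\times}$ so that $Q(u) = Q(u')$. The map $u \mapsto u'$ is then an isometry $L \to L'$ of nondegenerate subspaces. By Witt's extension theorem, which holds over $\mathbb{F}_{\ell}$ since $\ell \neq 2$, it extends to some $g \in \Or(\bar{V}_{\ell})$ with $gL = L'$. Conversely, $\Or(\bar{V}_{\ell})$ preserves the square class of $Q|_L$, so it maps $\mathcal{L}_a$ to itself. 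Hence $\mathcal{L}_a$ is exactly one orbit.

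\textbf{Stabilizer.} Because $L$ is non-isotropic, $\bar{V}_{\ell} = L \oplus L^{\perp}$ is an orthogonal decomposition. Any $g$ stabilizing $L$ also stabilizes $L^{\perp}$, so $g$ restricts to a pair $(g|_L, g|_{L^{\perp}}) \in \Or(L)\times\Or(L^{\perp})$. Conversely, every such pair glues to an isometry of $\bar{V}_{\ell}$. Since $\Or(L) = \{\pm 1\}$, the stabilizer has order $2\,\#\Or(L^{\perp})$. In the statement, ``$\#2O(L^\perp)$'' should be read as $2\,\#\Or(L^{\perp})$.

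There is no real obstacle. The only points needing care are invoking Witt's theorem in odd characteristic and noting that $\#\Or(L^{\perp})$ does not depend on the choice of $L \in \mathcal{L}_a$, which follows from transitivity. It is worth adding, for the later heuristic that roughly half of the non-isotropic lines lie in $\mathcal{L}_a$, that the isometry class of $L^{\perp}$ is determined by $a$ and $\mathrm{disc}(\bar{V}_{\ell})$. Consequently the two values of $\#\Or(L^{\perp})$, for the two square classes, differ only by a factor of the form $(1+O(\ell^{-1}))$ via the standard order formulas for orthogonal groups over finite fields.
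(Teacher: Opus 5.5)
Your proposal is correct and is essentially the paper's own argument: $\Or(\bar{V}_{\ell})$ acts transitively on $\mathcal{L}_a$ (rescale to equal norms, then extend $u \mapsto u'$ by Witt's theorem), the stabilizer of $L$ is $\Or(L)\times\Or(L^{\perp})$ of order $2\,\#\Or(L^{\perp})$, and orbit--stabilizer gives the count. Your extra remark, that the isometry class of $L^{\perp}$ is determined by $a$ and $\mathrm{disc}(\bar{V}_{\ell})$, is also correct, though the lemma itself does not need it.
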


\begin{proof}
	The orthogonal group $O(V)$ acts transitively on $\mathcal{L}_a$. To see why, note that if $L = \langle u \rangle$ and $L' = \langle v \rangle$ have the same squareclass, rescale $v$ so that $Q(u) = Q(v)$. The isometry $u \mapsto v$ can be extended to an isometry of $\bar{V}_{\ell}$ via the Witt Isometry theorem. 

	On the other hand, the stabilizer of $L$ is $O(L) \times O(L^{\perp})$, which has order $2 (\#O(L^{\perp}))$. The statement then follows via Orbit-Stabilizer. 
\end{proof}

We then count the number of elements in each coset that correspond to a particular line. 

\begin{lemma}\label{lemma.B}
	The number of $x \in b\Omega \cap U_{\ell}$ such that $x$ corresponds to the line $L$ is 
	\[\frac{1}{2} \# SO(L^{\perp}) + O(\#SO(L^{\perp})/\ell),\]
	where the implicit constant only depends on $n = \dim V$ and the error term corresponds to the complement of $\bar{U}_{\ell}$. 
\end{lemma}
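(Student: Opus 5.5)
The plan is to parametrize elements $x \in U_\ell$ with fixed line $L$ by their restriction to $L$ and $L^\perp$, and then identify the coset condition $x \in b\Omega_\ell$ with a determinant and spinor norm condition on the $L^\perp$-block. Concretely, if $x$ fixes $L$ (up to the sign $\epsilon = \pm 1$ by which it acts on $L$), then $x$ preserves $L^\perp$, since it is orthogonal and $L$ is nondegenerate. Hence $x = \epsilon \oplus B$ with $B \in O(L^\perp)$. The requirement that $L$ be exactly the root-of-unity eigenline forces $B$ to have no eigenvalue $\pm 1$. Since $\dim L^\perp = n-1$ is even in the generic jump $1$ setting, this is compatible with $\det B = 1$. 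We also have $\det x = \epsilon \det B$, so fixing the determinant coset pins down the pair $(\epsilon, \det B)$.

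Step one is to fix $b$ and observe that $b\Omega_\ell$ is specified by a value of $\det$ and a value of the spinor norm. I take $\det B = 1$, because an element of $O(L^\perp)\setminus SO(L^\perp)$ on an even-dimensional space has both $1$ and $-1$ as eigenvalues and so lies outside $U_\ell$. Then $\epsilon = \det b$ is determined. Step two uses multiplicativity of the spinor norm on the orthogonal decomposition, $\mathrm{sn}(x) = \mathrm{sn}(\epsilon|_L)\,\mathrm{sn}(B)$. Here $\mathrm{sn}(\epsilon|_L)$ is trivial if $\epsilon = 1$ and equals $[Q(L)] = a$ if $\epsilon = -1$. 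So the coset condition becomes a single prescribed value of $\mathrm{sn}(B)$ in $\mathbb{F}_\ell^\times/(\mathbb{F}_\ell^\times)^2$.

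The count therefore reduces to the number of $B \in SO(L^\perp)$ with prescribed spinor norm and no eigenvalue $\pm1$. Elements of $SO(L^\perp)$ with a prescribed spinor norm form a coset of $\Omega(L^\perp)$ of index $2$, which has exactly $\tfrac12 \# SO(L^\perp)$ elements. It remains to subtract those $B$ that have an eigenvalue $\pm 1$. For this I bound $\#\{B \in SO(L^\perp) : \det(B \mp 1) = 0\}$ by $O(\#SO(L^\perp)/\ell)$. These form a proper closed subvariety of codimension at least one in $SO_{n-1}$, cut out by $\det(B-1)\det(B+1)=0$, with degree bounded in terms of $n$. A Lang–Weil type estimate, or the standard count of elements of finite classical groups with eigenvalue $1$ (by counting the fixed vector and its stabilizer, as in Lemma \ref{lemma.lines}), gives at most $C_n \#SO(L^\perp)/\ell$ such elements, uniformly in $\ell$. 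The same bound absorbs the elements with $r(x) > 1$, that is, the complement of $\bar U_\ell$ in the count.

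The main obstacle is this uniform error bound. In particular I need it within each spinor-norm coset rather than in all of $SO(L^\perp)$, and I need it not to degrade for small $\ell$. I would first try the crude union bound over all of $SO(L^\perp)$. This already gives $O(\#SO(L^\perp)/\ell)$ for the bad set inside any single coset, since the bad set in a coset is at most the bad set in the whole group, and that suffices for the stated form. Uniformity would come from the explicit formulas for $\#SO^{\pm}_{n-1}(\mathbb{F}_\ell)$ together with the orbit–stabilizer count of pairs (fixed vector, element of the stabilizer).
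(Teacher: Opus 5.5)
Your proposal is correct and follows the paper's proof: write $x=\epsilon I_L\perp B$, turn membership in $b\Omega_\ell$ into a prescribed spinor norm for $B$, which selects exactly half of $SO(L^{\perp})$, and remove the $B$ having eigenvalue $\pm 1$ using a Lang--Weil bound on a proper closed subvariety. You also make explicit two steps the paper leaves implicit: that $\det B=1$ is forced (an improper $B$ on the even-dimensional $L^{\perp}$ has both $1$ and $-1$ as eigenvalues), and that when $\epsilon=-1$ the reflection $-I_L$ contributes the class $a=[Q(L)]$ to the spinor norm, so for each fixed $L$ there is still exactly one admissible spinor norm for $B$.
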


\begin{proof}
    Let the determinant of elements in $b \Omega_{\ell}$ be $\epsilon$. Following the discussion above, the number of such $x$ corresponds to the number of $B$ so that 
    \[x = (\epsilon I_L \perp B) \in b\Omega_{\ell} \cap U_{\ell}.\]
    For $x$ to be an element of $b \Omega_{\ell}$, $B$ must have the appropriate spinor norm. Exactly half of the elements of $\SO(L^{\perp})$ have a given spinor norm, which gives the first term in the expression. All that remains is to subtract the number of $B$ that have $\pm 1$ as an eigenvalue. Let 
    \[Z_{\pm 1} = \{B \in SO(L^{\perp}) | \det(B - I)  \text{ or } \det(B + I) = 0 \}. \]
    Since $L^{\perp}$ is even dimensional, $Z_{\pm 1}$ is a proper closed subvariety of $SO(L^{\perp})$. Thus by the Lang-Weil estimate, $Z_{\pm 1}(\mathbb{F}_{\ell}) = O(\ell^{\dim(SO(L^{\perp})-1}) = O(\#SO(L^{\perp})/\ell)$ (see \cite{lang1954number}).
\end{proof}

\begin{proposition}\label{prop.proportion}
The proportion of elements in each coset $b \Omega_{\ell}$ of $\Omega_{\ell}$ in our set $C_{D,\ell}$ is 
\[\frac{\# \{x \in b \Omega_{\ell} \cap C_{D,\ell}\}}{\# \Omega_{\ell}} = \frac{1}{2} + O(\ell^{-1}), \]
where the implicit constant only depends on $n$.
\end{proposition}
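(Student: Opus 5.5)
We split $b\Omega_\ell \cap C_{D,\ell}$ into the part lying in $\bar U_\ell$ and the part lying in its complement, and count each separately. The complement $b\Omega_\ell \setminus \bar U_\ell$ consists of elements whose characteristic polynomial has $\pm 1$ as a root with total multiplicity at least $2$. In a component of $\Or(\bar V_\ell)$ of fixed determinant (recall $\dim \bar V_\ell$ is odd here), this is a proper closed subvariety defined over $\mathbb{Z}$ independently of $\ell$, of bounded degree. By Lang--Weil it therefore has $O(\ell^{\dim \SO -1})$ points. Since $\#\Omega_\ell \asymp \ell^{\dim \SO}$ with constants depending only on $n$, this contributes $O(\ell^{-1})$ to the proportion. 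So it suffices to count $x \in b\Omega_\ell \cap \bar U_\ell$ with $\chi_{\ell,D}(a_{x,\ell})=1$.

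For such $x$, the eigenvalue $\epsilon=\det b$ is simple and its eigenline $L_x$ is nondegenerate. Hence $x = \epsilon I_{L_x} \perp B$ with $B \in \SO(L_x^\perp)$ having no eigenvalue $\pm1$, and this decomposition is unique. The map $x \mapsto L_x$ partitions the set by lines. The condition $\chi_{\ell,D}=1$ says exactly that $L_x \in \mathcal{L}_{[D]}$. We then sum Lemma~\ref{lemma.B} over $L \in \mathcal{L}_{[D]}$ and use Lemma~\ref{lemma.lines} to obtain
\[
\#\{x\in b\Omega_\ell\cap \bar U_\ell : \chi_{\ell,D}=1\} = \frac{\#\Or(\bar V_\ell)}{2\,\#\Or(L^\perp)}\left(\tfrac12\#\SO(L^\perp) + O(\#\SO(L^\perp)/\ell)\right) = \frac{\#\Or(\bar V_\ell)}{8}\bigl(1+O(\ell^{-1})\bigr),
\]
using $\#\Or(L^\perp) = 2\#\SO(L^\perp)$. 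Since $\Omega_\ell$ has index $4$ in $\Or(\bar V_\ell)$, this is $\tfrac12\#\Omega_\ell\,(1+O(\ell^{-1}))$. Adding the $O(\ell^{-1})$ contribution from the first paragraph gives the claimed $\tfrac12+O(\ell^{-1})$.

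Some bookkeeping is needed here. The group $\#\Or(L^\perp)$ depends on $L$ only through its square class; within $\mathcal{L}_{[D]}$ this class is constant, so the computation is uniform. Lemma~\ref{lemma.B} already counts elements of $U_\ell$; we replace $U_\ell$ by $\bar U_\ell$, which only changes the error term. We also need the index $[\Or(\bar V_\ell):\Omega_\ell]=4$, i.e.\ that $\det\times\mathrm{sn}$ is surjective. This holds for $\ell$ odd and $\dim\ge 2$, since reflections in vectors of both square classes exist.

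The main obstacle is the spinor-norm step used inside Lemma~\ref{lemma.B}. Membership of $x=\epsilon I_L\perp B$ in $b\Omega_\ell$ requires $\mathrm{sn}(B)$ to equal a value determined by $b$, $\epsilon$ and $Q(L)$. We must check that exactly half of $\SO(L^\perp)$ has each spinor norm, and that this persists, up to $O(\#\SO(L^\perp)/\ell)$, after removing $B$ with eigenvalue $\pm1$. The first part follows since $\mathrm{sn}:\SO(L^\perp)\to\mathbb{F}_\ell^\times/(\mathbb{F}_\ell^\times)^2$ is a surjective homomorphism, as $\dim L^\perp \ge 2$. The second part needs the Lang--Weil bound applied in each spinor coset separately. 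This is fine because $Z_{\pm1}$ is a proper closed subvariety of $\SO(L^\perp)$, so it has $O(\ell^{\dim \SO(L^\perp)-1})$ points in total.
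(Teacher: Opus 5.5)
Your proposal is correct and follows the paper's proof essentially step for step. It bounds the complement of $\bar U_\ell$ in $b\Omega_\ell$ by Lang--Weil, then sums Lemma~\ref{lemma.B} over $L \in \mathcal{L}_{[D]}$ with the orbit count of Lemma~\ref{lemma.lines} and $[\Or(\bar V_\ell):\Omega_\ell]=4$ to get $\#\Or(\bar V_\ell)/8 = \tfrac12\#\Omega_\ell$. Your bookkeeping remarks fill in points the paper leaves implicit: uniqueness of $x=\epsilon I_{L_x}\perp B$, constancy of $\#\Or(L^\perp)$ within a square class, and surjectivity of $\det\times\mathrm{sn}$ and of the spinor norm on $\SO(L^\perp)$.
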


\begin{proof}
    By our preceding discussion, the size of $b \Omega_{\ell} \cap \{x \in U_{\ell} | \chi_{\ell,D} = 1 \}$ is the number of pairs $(L,B)$ with $L \subset \bar{V}_{\ell}$ a nonisotropic line representing a fixed squareclass $a \in \mathbb{F}_{\ell}^{\times}/(\mathbb{F}_{\ell}^{\times})^2$ and $B \in SO(L^{\perp})$ having no eigenvalues equal to $\pm 1$. Combining Lemmas \ref{lemma.lines} and \ref{lemma.B} we have 
    \begin{align*}
        \sum_{L \in \mathcal{L}_a} \#\{x \in b \Omega_{\ell} \cap U_{\ell} \text{ fixing } L \} &= \sum_{L \in \mathcal{L}_a} \frac{1}{2}\#SO(L^{\perp}) + O(\#SO(L^{\perp})/\ell) \\
        &= \frac{\# O(\bar{V}_{\ell})}{\# 2 O(L^{\perp})}\left(\frac{1}{2}\#SO(L^{\perp}) + O(\#SO(L^{\perp})/\ell) \right)\\
        &= \frac{\#O(\bar{V}_{\ell})}{8} + O(\# \Omega_{\ell}/\ell)\\
        &= \frac{\#\Omega}{2} + O(\# \Omega_{\ell}/\ell)
    \end{align*}
    To count $b\Omega_{\ell} \cap U_{\ell}^c$, note that we may use Lang-Weil as above to conclude that 
    \[\#(b\Omega \cap U_{\ell}^c) = O(\# \Omega_{\ell}/\ell) \]
    Dividing by $\# \Omega_{\ell}$, we reach our desired count.
\end{proof}

The fact that the proportion of elements in $C_{D,\ell}$ is uniform on each coset is exactly the type of input Propositions \ref{prop.Zywina} and \ref{prop.chavdarov} need, which average over various Galois cosets. 

\begin{proof}[Proof of Theorem \ref{thm.sqclass}]
    We start with the function field case. Let $X_{n,D}$ be the event that for $x \in U(\mathbb{F}_{q^n})$ we have the new cycle space at $x$ represents the squareclass $D$. 
    
    Let $S$ be a set of finite primes and let 
    \[G_{0,S} = \prod_{\ell \in S} G_{\ell}^{geom}, \quad G_S = \prod_{\ell \in S} \mathrm{im}(\bar{\rho}_{\ell})_{\ell \in S} \] 
    By Goursat's lemma, we have that the map $\Lambda_S := (\bar{\rho}_{\ell})_{\ell \in S}$ is surjective onto $G_{0,S}$.

    As in Propositon \ref{prop.geometric}, $G_{0,S}$ is a union of cosets of $D_S$ which we write as
    \[G_{0,S}  = \bigsqcup_{h \in G_{0,S}/D_S} h D_S. \]
    Fix $g_n \in G_S^{(\gamma^n)}$. Then we have 
    \[G_S^{(\gamma^n)} = g_n G_{0,S} =  g_n \bigsqcup_{h \in G_{0,S}/D_S} h D_S.\] 
    We may thus use Proposition \ref{prop.proportion} to estimate
    \begin{align*}
        \frac{\#(C_S \cap G_S^{(\gamma^n)})}{\#G_{0,S}} &= \frac{1}{|G_{0,S}/D_S|} \sum_{h \in G_{0,S}/D_S} \prod_{\ell \in S} \frac{\#(C_{\ell} \cap h\Omega_{\ell})}{\#\Omega_{\ell}}\\
        &\leq  \frac{1}{|G_{0,S}/D_S|} \sum_{h \in G_{0,S}/D_S} \prod_{\ell \in S} \left(\frac{1}{2} + \frac{A}{\ell}\right)\\
        & \leq \prod_{\ell \in S} \left(\frac{1}{2} + \frac{A}{\ell}\right)
    \end{align*}
    for some absolute constant $A$. Applying Proposition \ref{prop.chavdarov}, we have 
    \[\mathrm{Prob}(X_{n,D}) \leq \mathrm{Prob} \{u \in U(\mathbb{F}_{q^n}) |\lambda(\mathrm{Frob}_{\mathbb{F}_{q^n},u}) \in C \} \leq \prod_{\ell \in S} \left(\frac{1}{2} + \frac{A}{\ell}\right) + \frac{R_S}{\sqrt{q^n}}\]
    where $R_S$ is a constant only depending on $U$. 
    Now fix $\epsilon > 0$, and pick the size of $S$ large enough so that 
    \[\prod_{\ell \in S} \left(\frac{1}{2} + \frac{A}{\ell}\right) < \epsilon/2,\]
    and fix $n_0$ such that 
    \[\frac{R_S}{\sqrt{q^{n_0}}} < \epsilon/2.\]
    Thus, for $n > n_0$ we have $\mathrm{Prob}(X_n) < \epsilon$. This proves the Theorem in the function field case. 

    The number field case follows very similarly, so we sketch the idea. For every coset $b \subseteq \Gal(K'/K)$, the image $\bar{\rho}_{\ell}(\Gamma_b)$ is a union of cosets of $\Omega_{\ell}$. Proposition \ref{prop.proportion} gives a proportion of 
    \[\frac{1}{2} + O(\ell^{-1}) \]
    of elements in $C_{D,\ell}$, uniformly on each coset. Thus the proportion is the same on a union of them. Applying Proposition \ref{prop.Zywina} and averaging over conjugacy classes in $\Gal(K'/K)$, we have 
    \[\delta(T_S) \leq \prod_{\ell \in S} \left(\frac{1}{2} + \frac{A}{\ell} \right)\] 
    which goes to 0 as the size of $S$ grows. 
\end{proof}

\subsection{Further generalizations}
Lemma \ref{lemma.meyer} indicates that in general, expecting a K3 surface over a global field to have a density 0 set of reductions having a line bundle satisfying our specific norm condition is too crude. Instead, we must take into account the size of forced Picard rank jumps in each monodromy component. To illustrate this problem further, even when global representability is not a problem, local representability poses an issue. 

\begin{rem}
    For the characteristic polynomial of Frobenius to be able to detect a specific special cycle, we pass to the Kuga-Satake abelian scheme. While the Kuga-Satake representation is some power of spin representations, we may restrict to an appropriate spin module $S$ and look for a condition on the characteristic polynomial there. 

    Suppose that $p$ is a point of good reduction such that $\dim W_p \geq 2$ and that $c$ is represented by a vector $v$. Via Tate, some power of Frobenius $Frob_p^N$ fixes $W_p$. Since $v$ represents $c$, its image in $W_p \otimes \mathbb{Q}_{\ell}$ also represents $c$. By definition of the Clifford relation
    \[L_{v}^2 = Q(v) = c,\] 
    where $L_{v}$ is Clifford left multiplication by $v$. Thus after extending scalars to $\mathbb{Q}_{\ell}(\sqrt{c})$, $L_{v}$ is diagonalizable with eigenvalues $\pm \sqrt{c}$. A corresponding lift of $Frob_p^N$ preserves these eigenspaces over $\mathbb{Q}_{\ell}(\sqrt{c})$, and thus its characteristic polynomial must split over $\mathbb{Q}_{\ell}(\sqrt{c})$. 

    The problem with checking this condition is that for most $\ell$, the reduction of our form $Q$ mod $\ell$ is nondegenerate and a nondegenerate form of rank $n \geq 2$ over $\mathbb{F}_{\ell}$ is universal. Thus, via Hensel's lemma, we may lift our solutions to mod $\ell^n$ for all $n$. In other words, naive polynomial factorization or representability of a square class mod $\ell$ is too common a condition for a sieve to work. 
\end{rem}

Thus, a finer sieve condition is required on a monodromy component with generic rank jump of 2 or higher, which we leave for future work.

\bibliographystyle{alpha} 
\bibliography{references}

\end{document}